\newtheorem{theorem}{Theorem}[section]
\newtheorem{corollary}[theorem]{Corollary}
\theoremstyle{definition}
\theoremstyle{remark}
\newtheorem{remark}[theorem]{Remark}
\numberwithin{equation}{section}
\definecolor{red}{rgb}{1,0,0}
\definecolor{blue}{rgb}{.2,.2,.8}
\begin{document}

\title{A generalization of complete and elementary symmetric functions}

\author[M.~Ahmia]{Moussa Ahmia}
\address[M.~Ahmia]{Department of Mathematics, Mohamed Seddik Benyahia University, Jijel, Algeria}
\curraddr{LMAM Laboratory, BP 18000 Ouled Aissa, Jijel}
\email{ahmiamoussa@gmail.com or moussa.ahmia@univ-jijel.dz}
\author[M.~Merca]{Mircea Merca}
\address[M.~Merca]{Department of Mathematics, University of Craiova, Craiova, Romania\\Academy of Romanian Scientists, Ilfov 3, Sector 5, Bucharest, Romania}
\email{mircea.merca@profinfo.edu.ro}

\subjclass[2010]{Primary 05E05, 11T06; Secondary 05A10}

\date{4 May 2020}


\keywords{Symmetric functions, complete homogeneous symmetric functions, elementary symmetric functions, power sum symmetric functions}

\begin{abstract}
In this paper, we consider the generating functions of the complete and elementary symmetric functions and provide a new generalization of these classical symmetric functions. Some classical relationships involving the complete and elementary symmetric functions are reformulated in a more general context.
Combinatorial interpretations of these generalized symmetric functions are also introduced.
\end{abstract}

\maketitle

\section{Introduction}
\label{S1}

A formal power series in the variables $x_1,x_2,\ldots,x_n$ is called symmetric if it is
invariant under any permutation of the variables. These symmetric formal power
series are traditionally called symmetric functions. A symmetric function is homogeneous of degree $k$ if every monomial in it has total degree $k$.
Symmetric functions are ubiquitous in mathematics and mathematical physics. For example, they appear in elementary algebra (e.g. Viete’s Theorem), representation theories of symmetric groups and general linear groups over $\mathbb{C}$ or finite fields. They are also important objects to study in algebraic combinatorics.

A partition $\lambda=[\lambda_1,\lambda_2,\ldots,\lambda_k]$ of a positive integer $n$ is a weakly decreasing sequence of positive integers whose sum is $n$, i.e.,
$$\lambda_1+\lambda_2+\cdots+\lambda_k = n\qquad\text{and}\qquad \lambda_1\ge\lambda_2\ge\cdots\ge\lambda_k>0.$$
The positive integers in the sequence are called parts \cite{BA0}.
The multiplicity of the part $i$ in $\lambda$, denoted by $t_i$, is the number of parts of $\lambda$ equal to $i$.
We denote by $l(\lambda)$ the number of parts of $\lambda$.
In order to indicate that $\lambda=[\lambda_1,\lambda_2,\ldots,\lambda_k]$
or $\lambda=[1^{t_1} 2^{t_2} \ldots n^{t_n}]$
is a partition of $n$, we use the notation $\lambda \vdash n$.
Very recently, Merca \cite{Mer1,Mer2} gave the fastest algorithms to enumerate all the partitions of an integer.

We recall some basic facts about monomial symmetric functions. Proofs and details
can be found in Macdonald’s book  \cite{Mck}. If $\lambda =[\lambda _1,\lambda _2 ,\ldots,\lambda_k]$ is an integer partition with $k\leq n$ then,  the monomial symmetric function
$$m_{\lambda}(x_1,x_2,\ldots,x_n)=m_{[\lambda_1,\lambda_2,\ldots,\lambda_k]}(x_1,x_2,\ldots,x_n)$$
is the sum of monomial $x_1^{\lambda_1}x_2^{\lambda_2}\cdots x_k^{\lambda_k}$ and all distinct monomials obtained from it by a permutation of variables. For instance, with $\lambda=[2,1,1]$ and $n=4$, we have:
\begin{align*}
 m_{[2,1,1]}(x_1,x_2,x_3,x_4) & =x_1^2x_2x_3+x_1x_2^2x_3+x_1x_2x_3^2+x_1^2x_2x_4\\
& \qquad +x_1x_2^2x_4+x_1x_2x_4^2+x_1^2x_3x_4+x_1x_3^2x_4\\
& \qquad\qquad +x_1x_3x_4^2+x_2^2x_3x_4+x_2x_3^2x_4+x_2x_3x_4^2.
\end{align*}

The $k$th complete homogeneous symmetric function $h_k$ is the sum of all
monomials of total degree $k$ in these variables, i.e.,
\begin{equation*}
h_{k}(x_{1},x_{2},\ldots,x_{n})=\sum_{\lambda \vdash k} m_{\lambda}(x_{1},x_{2},\ldots,x_{n})=\sum_{1\leq i_{1}\leq
	i_{2}\leq \cdots \leq i_{k}\leq n}x_{i_{1}}x_{i_{2}}\cdots x_{i_{k}},
\end{equation*}
and the $k$th elementary symmetric function is defined by
\begin{equation*}
e_{k}(x_{1},x_{2},\ldots,x_{n})=m_{[1^{k}]}(x_{1},x_{2},\ldots,x_{n})=\sum_{1\leq
	i_{1}<i_{2}<\cdots <i_{k}\leq n}x_{i_{1}}x_{i_{2}}\cdots x_{i_{k}},
\end{equation*}
where $e_{0}(x_{1},x_{2},\ldots,x_{n})=h_{0}(x_{1},x_{2},\ldots,x_{n})=1$.
In particular, when $\lambda=[k]$, we have the $k$th power sum symmetric function
$$
p_k(x_{1},x_{2},\ldots,x_{n}) =m_{[k]}(x_{1},x_{2},\ldots,x_{n})=\sum_{i=1}^{n}x_{i}^k,
$$
with $p_0(x_{1},x_{2},\ldots,x_{n})=n$.

The complete homogeneous symmetric functions are characterized by the following identity of
formal power series in $t$:
\begin{equation}\label{e1}
\sum_{k=0}^\infty h_{k}(x_{1},x_{2},\ldots,x_{n})t^{k}=\prod_{i=1}^{n}(1-x_{i}t)^{-1}.
\end{equation}
Analogous, for the elementary symmetric functions we have:
\begin{equation}\label{e2}
\sum_{k=0}^\infty e_{k}(x_{1},x_{2},\ldots,x_{n})t^{k}=\prod_{i=1}^{n}(1+x_{i}t).
\end{equation}

Inspired by these generating functions, we introduce the generalized
symmetric functions $H_{k}^{(s)}(x_{1},x_{2},\ldots,x_{n})$ and  $E_{k}^{(s)}(x_{1},x_{2},\ldots,x_{n})$ as follows:
\begin{equation}\label{Eq1}
\sum_{k=0}^\infty H_{k}^{(s)}(x_{1},x_{2},\ldots,x_{n})t^{k}
 =\prod_{i=1}^{n}\big(1-x_{i}t+\cdots +(-x_it)^s\big)^{-1}
\end{equation}
and
\begin{equation}\label{Eq2}
\sum_{k=0}^\infty E_{k}^{(s)}(x_{1},x_{2},\ldots,x_{n})t^{k}
=\prod_{i=1}^{n}\big(1+x_{i}t+\cdots +(x_it)^s\big),
\end{equation}
where $s$ is a positive integer.

Clearly, by setting $s=1$ in \eqref{Eq1} and \eqref{Eq2}, we obtain the generating functions for complete and elementary symmetric functions.  In addition, by  \eqref{Eq2} we easily deduce that
\begin{equation}\label{Eq3}
E_{k}^{(s)}(x_{1},x_{2},\ldots ,x_{n})=\sum_{\substack{ %
		\lambda \vdash k \\ \lambda_1\leq s}} m_{\lambda}(x_1,x_2,\ldots,x_n).
\end{equation}
Moreover, considering that
\begin{equation}\label{Eq4}
E_{k}^{(k)}(x_{1},x_{2},\ldots,x_{n}) = h_{k}(x_{1},x_{2},\ldots,x_{n}),
\end{equation}
the generalized symmetric functions $E_{k}^{(s)}(x_{1},x_{2},\ldots ,x_{n})$ can be seen as another generalization of the complete homogenous symmetric function $h_{k}(x_{1},x_{2},\ldots,x_{n})$.
To illustrate \eqref{Eq3}, we have
\begin{align*}
E_{5}^{(3)}(x_{1},x_{2},x_{3})&=m_{[2,2,1]}(x_{1},x_{2},x_{3})+m_{[3,1,1]}(x_{1},x_{2},x_{3})+m_{[3,2]}(x_{1},x_{2},x_{3}) \\
&=x_{1}^{2}x_{2}^{2}x_{3}+x_{1}^{2}x_{2}x_{3}^{2}+x_{1}x_{2}^{2}x_{3}^{2}+x_{1}^{3}x_{2}x_{3}
+x_{1}x_{2}^{3}x_{3}+x_{1}x_{2}x_{3}^{3}\\
&\qquad +x_{1}^{2}x_{2}^{3}+x_{1}^{2}x_{3}^{3}+x_{2}^{2}x_{3}^{3}+x_{1}^{3}x_{2}^{2}+x_{1}^{3}x_{3}^{2}+x_{2}^{3}x_{3}^{2}.
\end{align*}

We remark that the symmetric functions $E_{k}^{(s)}(x_{1},x_{2},\ldots ,x_{n})$
are not essentially a new generalization of the elementary symmetric functions $e_{k}(x_{1},x_{2},\ldots ,x_{n})$. An equivalent definition of these symmetric functions already exists in a paper published in $2018$ by Bazeniar et \textit{al.} \cite{BAZ}:
\begin{equation}\label{Eq6}
E_{k}^{(s)}(x_{1},x_{2},\ldots ,x_{n})=\sum_{\substack{
		\lambda \vdash k \\ 0\leq \lambda
		_{1},\lambda _{2},\ldots ,\lambda _{n}\leq s}}x_{1}^{\lambda
	_{1}}x_{2}^{\lambda _{2}}\cdots x_{n}^{\lambda_{n}},
\end{equation}
where $E_{0}^{(s)}(x_1,x_2,\ldots,x_n)=1$ and $E_{k}^{(s)}(x_1,x_2,\ldots,x_n)=0$ unless $0\leq k\leq sn$.
Moreover, the authors proved that the symmetric functions
$E_{k}^{(s)}(x_{1},x_{2},\ldots ,x_{n})$  satisfy the following recurrence relation.
\begin{equation}\label{Eq7}
	E_{k}^{(s)}(x_{1},x_{2},\ldots,x_{n})
	=\sum_{j=0}^{s}x_{n}^{j}E_{k-j}^{(s)}(x_{1},x_{2},\ldots,x_{n-1}).
\end{equation}
A similar result can be easily derived for the symmetric function
$H_{k}^{(s)}(x_{1},x_{2},\ldots ,x_{n})$, namely,
\begin{equation}\label{Eq8}
H_{k}^{(s)}(x_{1},x_{2},\ldots,x_{n-1})=\sum_{j=0}^{s} (-1)^j x_{n}^{j} H_{k-j}^{(s)}(x_{1},x_{2},\ldots,x_{n}).
\end{equation}

Very recently,  Fu and Mei \cite{Fu} and Grinberg \cite{Gri} independently introduced
the generalized symmetric functions $E_k^{(s)}$. Grinberg denoted these functions by $G(s,k)$ and called the \textit{Petrie symmetric functions} while
Fu and Mei used the notation $h_k^{[s]}$ and referred to them as \textit{truncated homogeneous symmetric functions}.
It seems that the paper \cite{BAZ} is not known by the authors of \cite{Fu,Gri}.

In this paper, motivated by these results, we investigate the properties of the generalized symmetric functions $H_{k}^{(s)}(x_{1},x_{2},\ldots,x_{n})$ and  $E_{k}^{(s)}(x_{1},x_{2},\ldots,x_{n})$.
Our paper is structured as follows. In the next section, we collect some classical relationships involving complete, elementary and power sum symmetric functions and provide generalizations for them. In Section \ref{S3} we show that the generalized symmetric functions $H_{k}^{(s)}$ and $E_{k}^{(s)}$
can be expressed in terms of the complete and elementary symmetric functions. In Section \ref{S4} we consider some combinatorial interpretations for the generalized symmetric functions  $H_{k}^{(s)}$ and $E_{k}^{(s)}$.

\section{Newton identities revisited}
\label{S2}

There is a fundamental relation between the elementary symmetric polynomials and the complete homogeneous ones:
\begin{equation}\label{E1}
\sum_{j=0}^{k} (-1)^j e_j(x_1,x_2,\ldots,x_n)h_{k-j}(x_1,x_2,\ldots,x_n)=\delta_{0,k},
\end{equation}
where $\delta_{i,j}$ is the Kronecker delta.
This relation is valid for all $k>0$, and any number of variables $n$.
We have the following generalization of this classical identity.

\begin{theorem}\label{T2.1}
	Let $k$, $n$ and $s$ be three positive integers and let $x_1,x_2,\ldots,x_n$ be independent
	variables. Then
	\begin{equation}
	\sum_{j=0}^{k} (-1)^j E_j^{(s)}(x_1,x_2,\ldots,x_n)H_{k-j}^{(s)}(x_1,x_2,\ldots,x_n)=\delta_{0,n}.
	\end{equation}
\end{theorem}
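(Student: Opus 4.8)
The plan is to exploit the generating-function definitions \eqref{Eq1} and \eqref{Eq2} directly. Observe that the factor appearing in $E^{(s)}$, namely $1+x_it+\cdots+(x_it)^s$, and the factor appearing in $H^{(s)}$, namely $\big(1-x_it+\cdots+(-x_it)^s\big)^{-1}$, are not quite reciprocals of each other; instead, summing the finite geometric series gives $1+x_it+\cdots+(x_it)^s=\dfrac{1-(x_it)^{s+1}}{1-x_it}$ and $1-x_it+\cdots+(-x_it)^s=\dfrac{1-(-x_it)^{s+1}}{1+x_it}$. Therefore
\[
\Big(\sum_{k\ge 0}E_k^{(s)}t^k\Big)\Big(\sum_{k\ge 0}H_k^{(s)}t^k\Big)
=\prod_{i=1}^{n}\frac{1-(x_it)^{s+1}}{1-x_it}\cdot\frac{1+x_it}{1-(-x_it)^{s+1}}.
\]
The left-hand side of the theorem is, up to reading off the $t^k$ coefficient, the Cauchy product of these two series evaluated with the sign $(-1)^j$ attached — i.e.\ it is the coefficient of $t^k$ in $\big(\sum E_j^{(s)}(-t)^j\big)\big(\sum H_{k-j}^{(s)}t^{k-j}\big)$, which is not symmetric in the two factors. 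Wait — I need to be careful about where the sign goes, so the first real step is to pin down which product of series has $\sum_j(-1)^jE_j^{(s)}H_{k-j}^{(s)}$ as its $t^k$-coefficient.

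More precisely, I would write $A(t)=\sum_k E_k^{(s)}t^k=\prod_i\frac{1-(x_it)^{s+1}}{1-x_it}$ and $B(t)=\sum_k H_k^{(s)}t^k=\prod_i\frac{1+x_it}{1-(-x_it)^{s+1}}$, so that the target sum is $[t^k]\,A(-t)B(t)$. Now compute
\[
A(-t)B(t)=\prod_{i=1}^{n}\frac{1-(-x_it)^{s+1}}{1+x_it}\cdot\frac{1+x_it}{1-(-x_it)^{s+1}}
=\prod_{i=1}^{n}1=1.
\]
Hence $[t^k]\,A(-t)B(t)=[t^k]\,1=\delta_{0,k}$. \emph{But the theorem claims $\delta_{0,n}$, not $\delta_{0,k}$}, so either I have mismatched the sign placement, or the intended statement pairs the two series differently. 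The key step is to check the other natural pairing: the coefficient of $t^k$ in $A(t)B(-t)$, or with the sign on the $H$-factor. Computing $A(t)B(-t)=\prod_i\frac{1-(x_it)^{s+1}}{1-x_it}\cdot\frac{1-x_it}{1-(x_it)^{s+1}}=1$ again gives $\delta_{0,k}$. The resolution must be that the theorem as displayed should read $\delta_{0,k}$ (consistent with the classical identity \eqref{E1} it generalizes, where the right side is $\delta_{0,k}$); the ``$\delta_{0,n}$'' is a typo. I would therefore prove the corrected statement $\sum_{j=0}^k(-1)^jE_j^{(s)}H_{k-j}^{(s)}=\delta_{0,k}$ by the one-line telescoping of the two generating products shown above, and remark on the $s=1$ specialization recovering \eqref{E1}.

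The only genuine obstacle is bookkeeping: making sure the geometric-series rewriting $1+y+\cdots+y^s=\frac{1-y^{s+1}}{1-y}$ is applied with $y=x_it$ in the $E$-factor and $y=-x_it$ in the $H$-factor (so the $H$-factor's denominator is $1-(-x_it)^{s+1}=1+x_it$ when $s$ is even and $1-x_it$... no — it is $1-(-x_it)^{s+1}$ regardless, and does \emph{not} simplify further in general), and then verifying that the substitution $t\mapsto -t$ in one factor produces exactly the reciprocal of the other factor term-by-term in $i$. Once the two Laurent/power series are seen to multiply to the constant $1$, extracting the $t^k$-coefficient is immediate and no convergence issue arises since everything is a formal power series in $t$. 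I would present this as: (i) sum the geometric series in each Euler-type product; (ii) substitute $t\to-t$ in the $E$-series; (iii) observe the product collapses to $1$; (iv) equate coefficients of $t^k$.
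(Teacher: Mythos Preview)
Your proof is correct and follows essentially the same route as the paper: both observe that replacing $t$ by $-t$ in the $E^{(s)}$-generating function \eqref{Eq2} yields exactly the reciprocal of the $H^{(s)}$-generating function \eqref{Eq1}, so the Cauchy product is $1$ and equating coefficients of $t^k$ finishes. You are also right that the $\delta_{0,n}$ in the displayed statement is a typo for $\delta_{0,k}$; the paper's own proof concludes with the coefficient of $t^k$ in the constant series $1$, which is $\delta_{0,k}$, matching the classical case \eqref{E1}.
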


\begin{proof}
	By \eqref{Eq1} and \eqref{Eq2}, we see that
	$$
	\left( \sum_{k=0}^\infty (-1)^k E_{k}^{(s)}(x_{1},x_{2},\ldots,x_{n})t^{k} \right) \left( \sum_{k=0}^\infty H_{k}^{(s)}(x_{1},x_{2},\ldots,x_{n})t^{k} \right) = 1.
	$$
	Considering the well known Cauchy product of two power series, we obtain
	$$
	\sum_{k=0}^\infty \left( \sum_{j=0}^{k} (-1)^j E_j^{(s)}(x_1,x_2,\ldots,x_n)H_{k-j}^{(s)}(x_1,x_2,\ldots,x_n) \right) t^k = 1.
	$$
	This concludes the proof.
\end{proof}

Theorem \ref{T2.1} and \cite[Theorem 1]{Mer14} allow us to derive two symmetric identities for the generalized symmetric functions $H_{k}^{(s)}$ and $E_{k}^{(s)}$.

\begin{corollary}
	Let $k$, $n$ and $s$ be three positive integers and let $x_1,x_2,\ldots,x_n$ be independent
	variables. The generalized symmetric functions $H_{k}^{(s)}=H_{k}^{(s)}(x_1,x_2,\ldots,x_n)$ and $E_{k}^{(s)}=E_{k}^{(s)}(x_1,x_2,\ldots,x_n)$ are related by
	\[
	H_{k}^{(s)}=\sum_{t_{1}+2t_{2}+\cdots +kt_{k}=k}(-1)^{k+t_{1}+t_{2}+\cdots +t_{k}} \binom{t_{1}+t_{2}+\cdots +t_{k}}{t_{1},t_{2},\ldots ,t_{k}}
	\prod_{i=1}^k \left( E_{i}^{(s)}\right) ^{t_{i}}
	\]%
	and
	\[
	E_{k}^{(s)}=\sum_{t_{1}+2t_{2}+\cdots +kt_{k}=k}(-1)^{k+t_{1}+t_{2}+\cdots +t_{k}} \binom{t_{1}+t_{2}+\cdots +t_{k}}{t_{1},t_{2},\ldots ,t_{k}}
	\prod_{i=1}^k\left( H_{i}^{(s)}\right) ^{t_{i}}.
	\]
\end{corollary}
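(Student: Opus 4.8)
The plan is to recognize this corollary as a direct instance of a general inversion principle --- namely \cite[Theorem 1]{Mer14} --- applied to the two sequences $\bigl\{E_k^{(s)}(x_1,\dots,x_n)\bigr\}_{k\ge0}$ and $\bigl\{H_k^{(s)}(x_1,\dots,x_n)\bigr\}_{k\ge0}$, so that almost all of the work lies in checking hypotheses. First I would record that $E_0^{(s)}=H_0^{(s)}=1$ (set $t=0$ in \eqref{Eq1} and \eqref{Eq2}) and restate Theorem~\ref{T2.1} in the equivalent form used in its own proof: the formal power series $\sum_{k\ge0}(-1)^kE_k^{(s)}t^k$ and $\sum_{k\ge0}H_k^{(s)}t^k$ are reciprocal to one another. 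These are exactly the data required by \cite[Theorem 1]{Mer14}, and substituting $a_i=E_i^{(s)}$, $b_i=H_i^{(s)}$ there delivers the first displayed identity.

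If one wants the mechanism spelled out rather than black-boxed, I would argue as follows. Write $\sum_{k\ge0}(-1)^kE_k^{(s)}t^k=1-W(t)$, where $W(t)=\sum_{k\ge1}(-1)^{k-1}E_k^{(s)}t^k$; then $\sum_{k\ge0}H_k^{(s)}t^k=\bigl(1-W(t)\bigr)^{-1}=\sum_{m\ge0}W(t)^m$. Expanding each $W(t)^m$ by the multinomial theorem and extracting the coefficient of $t^k$ forces $t_1+2t_2+\cdots+kt_k=k$ with $m=t_1+\cdots+t_k$, and the accompanying sign is $\prod_{i\ge1}(-1)^{(i-1)t_i}=(-1)^{\sum_i(i-1)t_i}=(-1)^{k-m}=(-1)^{k+t_1+\cdots+t_k}$, which is precisely the exponent appearing in the statement; the multinomial coefficient $\binom{m}{t_1,\dots,t_k}$ is what is displayed.

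The second identity I would then obtain for free by symmetry: replacing $t$ with $-t$ in the reciprocal relation turns it into the statement that $\sum_{k\ge0}(-1)^kH_k^{(s)}t^k$ and $\sum_{k\ge0}E_k^{(s)}t^k$ are reciprocal, so the roles of $E^{(s)}$ and $H^{(s)}$ are completely interchangeable and the same argument applies verbatim. The only genuinely delicate point is the sign bookkeeping in the exponent $(-1)^{k+t_1+\cdots+t_k}$ --- tracking how the factor $(-1)^{i-1}$ coming from each summand of $W(t)$ combines across the multinomial expansion --- together with making sure the hypotheses of \cite[Theorem 1]{Mer14} are satisfied exactly; beyond that, the proof is essentially a matter of bookkeeping and requires no new computation.
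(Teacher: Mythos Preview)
Your proposal is correct and matches the paper's own approach exactly: the paper simply states that the corollary follows from Theorem~\ref{T2.1} together with \cite[Theorem~1]{Mer14}, and you have identified precisely this, even supplying the explicit geometric-series/multinomial computation that underlies \cite[Theorem~1]{Mer14}. There is nothing to add.
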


The problem of expressing power sum symmetric polynomials in terms of elementary symmetric polynomials and vice-versa and the problem of expressing power sum symmetric polynomials in terms of complete symmetric polynomials and vice-versa were solved a long time ago.
The relations as Newton's identities
\begin{equation}\label{E2}
ke_k(x_1,x_2,\ldots,x_n)=\sum_{j=1}^{k}(-1)^{j-1}e_{k-j}(x_1,x_2,\ldots,x_n)p_j(x_1,x_2,\ldots,x_n)
\end{equation}
or
\begin{equation}\label{E3}
kh_k(x_1,x_2,\ldots,x_n)=\sum_{j=1}^{k}h_{k-j}(x_1,x_2,\ldots,x_n)p_j(x_1,x_2,\ldots,x_n)
\end{equation}
are well known. Recently, Merca \cite{Mer16a} proved that the complete, elementary and power sum symmetric functions are related by
\begin{equation}\label{E4}
p_k(x_1,x_2,\ldots,x_n)=\sum_{j=1}^{k}(-1)^{j-1}je_j(x_1,x_2,\ldots,x_n)h_{k-j}(x_1,x_2,\ldots,x_n)
\end{equation}
and derived new relationships between complete and elementary symmetric functions:
\begin{equation}\label{E5}
2ke_k = \sum_{k_1+k_2+k_3=k} (-1)^{k_3} (k_1+k_2)e_{k_1}e_{k_2}h_{k_3},
\end{equation}
and
\begin{equation}\label{E6}
kh_k = \sum_{k_1+k_2+k_3=k} (-1)^{k_3-1} k_3 h_{k_1}h_{k_2}e_{k_3},
\end{equation}
where $k_1,k_2,k_3$ are nonnegative integers.

In order to provide the generalizations of \eqref{E2}-\eqref{E6}, we consider the symmetric function $P_k^{(s)}$ defined as
\begin{equation*}
P_{k}^{(s)}(x_{1},x_{2},\ldots,x_{n}) = c_k^{(s)} p_{k}(x_{1},x_{2},\ldots,x_{n}),
\end{equation*}
where
$$c_k^{(s)}=\begin{cases}
(-1)^{k} \cdot s,& \text{if $k\equiv 0 \bmod {(s+1)}$,}\\
(-1)^{k-1}, & \text{otherwise.}
\end{cases}
$$

\begin{theorem}\label{T2.3}
	Let $k$, $n$ and $s$ be three positive integers and let $x_1,x_2,\ldots,x_n$ be independent
	variables. Then
	\begin{enumerate}
		\item $\displaystyle{ kE_{k}^{(s)}(x_{1},x_{2},\ldots,x_{n})
			=\sum_{j=1}^{k}(-1)^{j-1} P_{j}^{(s)}(x_{1},x_{2},\ldots,x_{n}) E_{k-j}^{(s)}(x_{1},x_{2},\ldots,x_{n}) };$
		\item $\displaystyle{ kH_{k}^{(s)}(x_{1},x_{2},\ldots,x_{n})
			=\sum_{j=1}^{k} P_{j}^{(s)}(x_{1},x_{2},\ldots,x_{n}) H_{k-j}^{(s)}(x_{1},x_{2},\ldots,x_{n}) };$
		\item $\displaystyle{ P_{k}^{(s)}(x_{1},x_{2},\ldots,x_{n})
			=\sum_{j=1}^{k} (-1)^{j-1}j E_{j}^{(s)}(x_{1},x_{2},\ldots,x_{n}) H_{k-j}^{(s)}(x_{1},x_{2},\ldots,x_{n}) }.$
	\end{enumerate}
\end{theorem}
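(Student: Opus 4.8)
The plan is to mimic the classical derivation of Newton's identities via logarithmic differentiation of generating functions, the one new ingredient being the partial-fraction shape of the factors occurring in \eqref{Eq1} and \eqref{Eq2}. Write $E(t)=\sum_{k\ge 0}E_k^{(s)}t^k$ and $H(t)=\sum_{k\ge 0}H_k^{(s)}t^k$, suppressing the variables $x_1,x_2,\ldots,x_n$. First I would record the elementary identities
\[
1+x_it+\cdots+(x_it)^s=\frac{1-(x_it)^{s+1}}{1-x_it}
\qquad\text{and}\qquad
1-x_it+\cdots+(-x_it)^s=\frac{1-(-x_it)^{s+1}}{1+x_it},
\]
so that \eqref{Eq1} and \eqref{Eq2} become $E(t)=\prod_{i=1}^{n}\big(1-(x_it)^{s+1}\big)\big(1-x_it\big)^{-1}$ and $H(t)=\prod_{i=1}^{n}(1+x_it)\big(1-(-x_it)^{s+1}\big)^{-1}$.

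The core step is to prove the single relation
\[
\frac{E'(t)}{E(t)}=\sum_{j\ge 1}(-1)^{j-1}P_j^{(s)}(x_1,x_2,\ldots,x_n)\,t^{j-1}.
\]
For this I would apply $\frac{d}{dt}\log(\cdot)$ to the product for $E(t)$, obtaining $\sum_{i=1}^{n}\big(\frac{x_i}{1-x_it}-\frac{(s+1)x_i^{s+1}t^{s}}{1-(x_it)^{s+1}}\big)$, then expand both summands as geometric series and use $\sum_{i=1}^{n}x_i^{j}=p_j$. The first family of terms contributes $\sum_{j\ge 1}p_j t^{j-1}$ and the second contributes $-(s+1)\sum_{r\ge 1}p_{(s+1)r}\,t^{(s+1)r-1}$ (the factor $t^{s}$ in the numerator shifts the surviving powers exactly onto the multiples of $s+1$), so the coefficient of $t^{j-1}$ equals $p_j$ when $s+1$ does not divide $j$ and $-s\,p_j$ when $s+1$ divides $j$; a one-line comparison with the definition of $c_j^{(s)}$ identifies this coefficient as $(-1)^{j-1}P_j^{(s)}$. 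Substituting $t\mapsto -t$ and using the identity $E(-t)H(t)=1$ established in the proof of Theorem~\ref{T2.1} (so that $\frac{d}{dt}\log H(t)=-\frac{d}{dt}\log E(-t)=\frac{E'(-t)}{E(-t)}$) then yields the companion relation $\dfrac{H'(t)}{H(t)}=\sum_{j\ge 1}P_j^{(s)}(x_1,x_2,\ldots,x_n)\,t^{j-1}$; alternatively, this companion relation follows from the same direct logarithmic differentiation applied to the product for $H(t)$.

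With these two relations in hand, the three assertions drop out by clearing denominators and reading off coefficients through the Cauchy product. From $E'(t)=E(t)\sum_{j\ge1}(-1)^{j-1}P_j^{(s)}t^{j-1}$ and $E'(t)=\sum_{k\ge1}kE_k^{(s)}t^{k-1}$, comparing coefficients of $t^{k-1}$ gives (1); multiplying the companion relation by $H(t)$ and doing the same gives (2); and for (3) I would rewrite the companion relation as $\sum_{j\ge1}P_j^{(s)}t^{j-1}=E'(-t)H(t)$ via $H(t)=1/E(-t)$, expand $E'(-t)=\sum_{m\ge1}(-1)^{m-1}mE_m^{(s)}t^{m-1}$, and extract the coefficient of $t^{k-1}$. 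As a check, for $s=1$ one has $c_j^{(1)}=1$ for all $j$, hence $P_j^{(1)}=p_j$, and the three formulas collapse to the known identities \eqref{E2}, \eqref{E3}, \eqref{E4}.

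The main obstacle, as in all such computations, is the bookkeeping in the core step: keeping the two geometric expansions aligned in powers of $t$, correctly locating the ``doubled'' contributions at the multiples of $s+1$, and matching the two resulting coefficient values $1$ and $-s$ with the sign pattern $(-1)^{j-1}$ versus $(-1)^{j}s$ encoded in $c_j^{(s)}$. Everything downstream of the core and companion relations is routine power-series algebra of the kind already used for Theorem~\ref{T2.1}.
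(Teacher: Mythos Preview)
Your proposal is correct and follows the same overall architecture as the paper: compute the logarithmic derivative of the generating function, identify it with $\sum_{j\ge 1}P_j^{(s)}t^{j-1}$ (up to the sign twist), then clear denominators and equate coefficients. The genuine difference lies in the core step. You sum the geometric series to write each factor as $(1-(x_it)^{s+1})/(1-x_it)$ and differentiate that quotient, obtaining the coefficient pattern $p_j$ versus $-sp_j$ directly from two geometric expansions. The paper instead factors $1-x_it+\cdots+(-x_it)^s=\prod_{j=1}^{s}(1+\omega_{j,s+1}x_it)$ over the $(s{+}1)$st roots of unity, expands each linear factor, and arrives at the same coefficients via the evaluation $p_k(\omega_{1,s+1},\ldots,\omega_{s,s+1})\in\{s,-1\}$. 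Your route is more elementary and self-contained; the paper's route introduces the roots-of-unity viewpoint that it exploits again later (Theorem~\ref{T3.1}, Corollary~\ref{C3.2}) and yields Corollary~\ref{C2.7} as a natural byproduct. Downstream of the core relation, your derivations of (1)--(3) match the paper's almost verbatim.
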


\begin{proof}
	\allowdisplaybreaks{
	For $\omega_{j,s}=e^{2j \pi i/s}$ with $j=1,2,\ldots, s-1$, we can see that
	\begin{align*}
	& 1-t+\cdots+(-t)^{s-1}  = (-1)^{s-1} \prod_{j=1}^{s-1} (\omega_{j,s}+t)\\
	& \qquad= (-1)^{s-1} \prod_{j=1}^{s-1} \omega_{j,s}\left( 1+\frac{t}{\omega_{j,s}}\right)  = \prod_{j=1}^{s-1} \left( 1+\omega_{j,s} t\right),
	\end{align*}
	where we take into account that
	$$\prod_{j=1}^{s-1} \omega_{j,s} = (-1)^{s-1}\qquad\text{and}\qquad \omega_{j,s} = \frac{1}{\omega_{s-j,s}}.$$
	On one hand, we have
	\begin{align}
	& \frac{d}{dt} \ln \prod_{i=1}^{n}\big(1-x_{i}t+\cdots +(-x_it)^s\big)^{-1}\label{Eq2.8} \\
	&\qquad = \frac{d}{dt} \ln \prod_{i=1}^{n} \prod_{j=1}^s \left(1+\omega_{j,s+1}x_i t \right)^{-1}\notag \\
	& \qquad = \sum_{i=1}^n \sum_{j=1}^s \frac{d}{dt} \ln \left(1+\omega_{j,s+1}x_i t \right)^{-1}\notag \\
	& \qquad = -\sum_{i=1}^n \sum_{j=1}^s \frac{\omega_{j,s+1} x_i}{1+\omega_{j,s+1}x_i t }\notag \\
	& \qquad = -\sum_{j=1}^s \sum_{i=1}^n \left( \omega_{j,s+1} x_i - (\omega_{j,s+1} x_i)^2 t + (\omega_{j,s+1} x_i)^3 t^2 - \cdots \right)\notag  \\
	& \qquad = \sum_{k=1}^\infty (-1)^k \Bigg(\sum_{j=1}^s \omega_{j,s+1}^k \Bigg)
	\Bigg( \sum_{i=1}^n x_i^k \Bigg) t^{k-1}\notag \\
	& \qquad = \sum_{k=1}^\infty (-1)^k p_k\left(\omega_{1,s+1},\omega_{2,s+1},\ldots,\omega_{s,s+1}  \right)
	p_k(x_1,x_2,\ldots,x_n) t^{k-1}\notag \\
	& \qquad = \sum_{k=1}^\infty P_k^{(s)} (x_1,x_2,\ldots,x_n) t^{k-1},\notag
	\end{align}
	where we have invoked that
	$$ p_k(\omega_{1,s+1},\omega_{2,s+1},\ldots,\omega_{s,s+1})
	   = \begin{cases}
	   s,& \text{if $k\equiv 0 \bmod {(s+1)}$,}\\
	   -1, & \text{otherwise.}
	   \end{cases}
	$$
	
	On the other hand, we can write
		\begin{align}
		& \sum_{k=1}^\infty P_k^{(s)} (x_1,x_2,\ldots,x_n) t^{k-1}\notag \\
		& \qquad = \frac{d}{dt} \ln \prod_{i=1}^{n}\big(1-x_{i}t+\cdots +(-x_it)^s\big)^{-1}\notag \\
		& \qquad = \frac{d}{dt} \ln \left( \sum_{k=0}^{\infty} (-1)^k E_k^{(s)} (x_1,x_2,\ldots,x_n) t^{k} \right)^{-1}\notag \\
		& \qquad = -\dfrac{\sum\limits_{k=1}^{\infty} (-1)^{k} k E_k^{(s)} (x_1,x_2,\ldots,x_n) t^{k-1}}{\sum\limits_{k=0}^{\infty} (-1)^k E_k^{(s)} (x_1,x_2,\ldots,x_n) t^{k}}. \label{E7}
		\end{align}
		By this identity, with $t$ replaced by $-t$, we obtain
		\begin{align*}
		& \sum_{k=1}^{\infty} k E_k^{(s)} (x_1,x_2,\ldots,x_n) t^{k-1}  \\
		& \qquad = \left( \sum_{k=0}^{\infty} E_k^{(s)} (x_1,x_2,\ldots,x_n) t^{k} \right)
		 \left( \sum_{k=1}^\infty (-1)^{k-1} P_k^{(s)} (x_1,x_2,\ldots,x_n) t^{k-1} \right)
		\end{align*}
	and the first identity follows.

	To prove the second identity, we consider
	\begin{align*}
	& \sum_{k=1}^\infty P_k^{(s)} (x_1,x_2,\ldots,x_n) t^{k-1}\\
	& \qquad = \frac{d}{dt} \ln \prod_{i=1}^{n}\big(1-x_{i}t+\cdots +(-x_it)^s\big)^{-1}\\
	& \qquad = \frac{d}{dt} \ln \sum_{k=0}^{\infty} H_k^{(s)} (x_1,x_2,\ldots,x_n) t^{k}\\
	& \qquad = \left( \sum_{k=1}^{\infty} k H_k^{(s)} (x_1,x_2,\ldots,x_n) t^{k-1}\right) \left( \sum_{k=0}^{\infty} H_k^{(s)} (x_1,x_2,\ldots,x_n) t^{k} \right)^{-1}.
	\end{align*}
	
	Rewriting \eqref{E7} as
	\begin{align*}
	& \sum_{k=1}^\infty P_k^{(s)} (x_1,x_2,\ldots,x_n) t^{k-1}\\
	& \qquad = \left( \sum_{k=1}^{\infty} (-1)^{k-1} k E_k^{(s)} (x_1,x_2,\ldots,x_n) t^{k-1}\right) \left( \sum_{k=0}^{\infty} H_k^{(s)} (x_1,x_2,\ldots,x_n) t^{k} \right),
	\end{align*}	
	we derive the last identity and the theorem is proved.
}
\end{proof}

\begin{corollary}
	Let $k$, $n$ and $s$ be three positive integers and let $x_1,x_2,\ldots,x_n$ be independent
	variables.
	The symmetric functions $E_k^{(s)}=E_k^{(s)}(x_1,x_2,\ldots,x_n)$ and $H_k^{(s)}=H_k^{(s)}(x_1,x_2,\ldots,x_n)$ are related by
	$$
	2k E_k^{(s)} = \sum_{k_1+k_2+k_3=k} (-1)^{k_3} (k_1+k_2) E_{k_1}^{(s)} E_{k_2}^{(s)} H_{k_3}^{(s)}
	$$
	and
	$$
	k H_k^{(s)} = \sum_{k_1+k_2+k_3=k} (-1)^{k_3-1} k_3 H_{k_1}^{(s)} H_{k_2}^{(s)} E_{k_3}^{(s)},
	$$
	where $k_1,k_2,k_3$ are nonnegative integers.
\end{corollary}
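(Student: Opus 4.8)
The plan is to derive both identities purely formally from the three relations of Theorem \ref{T2.3}, imitating the way the classical identities \eqref{E5} and \eqref{E6} arise from \eqref{E2}--\eqref{E4}. The key observation is that part (3) of Theorem \ref{T2.3} expresses $P_j^{(s)}$ in terms of the $E^{(s)}$'s and $H^{(s)}$'s, so substituting it into parts (1) and (2) eliminates $P^{(s)}$ entirely and leaves a triple convolution.

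For the first identity I would substitute $P_j^{(s)}=\sum_{i=1}^{j}(-1)^{i-1}\,i\,E_i^{(s)}H_{j-i}^{(s)}$ (part (3) with $k$ replaced by $j$) into part (1), $kE_k^{(s)}=\sum_{j=1}^{k}(-1)^{j-1}P_j^{(s)}E_{k-j}^{(s)}$. Writing $j=i+l$ and $m=k-j$, the double sum becomes a sum over $i+l+m=k$, and the combined sign collapses, since $(-1)^{j-1}(-1)^{i-1}=(-1)^{2i+l-2}=(-1)^{l}$. One gets $kE_k^{(s)}=\sum_{i+l+m=k}(-1)^{l}\,i\,E_i^{(s)}E_m^{(s)}H_l^{(s)}$, where the sum may be taken over all nonnegative $i,l,m$ because the factor $i$ annihilates the $i=0$ term. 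Since $E^{(s)}$ occurs symmetrically in the slots labelled $i$ and $m$, relabelling $i\leftrightarrow m$ produces the same sum with $i$ replaced by $m$; adding the two forms yields $2kE_k^{(s)}=\sum_{i+l+m=k}(-1)^{l}(i+m)E_i^{(s)}E_m^{(s)}H_l^{(s)}$, which is exactly the first claimed identity after renaming $(i,m,l)\mapsto(k_1,k_2,k_3)$.

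For the second identity I would instead substitute part (3) into part (2), $kH_k^{(s)}=\sum_{j=1}^{k}P_j^{(s)}H_{k-j}^{(s)}$. With the same change of variables $j=i+l$, $m=k-j$, no sign cancellation is needed and one obtains directly $kH_k^{(s)}=\sum_{i+l+m=k}(-1)^{i-1}\,i\,E_i^{(s)}H_l^{(s)}H_m^{(s)}$, again extending the range to $i\ge 0$ harmlessly. Renaming $(l,m,i)\mapsto(k_1,k_2,k_3)$ gives $kH_k^{(s)}=\sum_{k_1+k_2+k_3=k}(-1)^{k_3-1}k_3\,H_{k_1}^{(s)}H_{k_2}^{(s)}E_{k_3}^{(s)}$, as desired; here no symmetrization step is required.

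The whole argument is formal manipulation of already-established identities, so I do not expect a genuine obstacle; the only point needing a little care is the index bookkeeping — checking that the summation ranges match after the substitution $j=i+l$, $m=k-j$, and justifying the extension of each sum from $i\ge 1$ to $i\ge 0$ by the presence of the multiplicative factor $i$ (respectively $k_3$).
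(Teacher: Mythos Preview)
Your proposal is correct and is precisely the argument the paper intends: the Corollary is stated without proof immediately after Theorem \ref{T2.3}, as the direct analogue of the way \eqref{E5}--\eqref{E6} are obtained from \eqref{E2}--\eqref{E4} in \cite{Mer16a}, and your substitution of part (3) into parts (1) and (2), together with the symmetrization $i\leftrightarrow m$ for the first identity, is exactly that derivation. The index bookkeeping you flag is routine and your handling of it is correct.
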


It is well-known that the power sum symmetric functions $p_k=p_k(x_1,x_2,\ldots,x_n)$ can be expressed in terms of elementary symmetric functions $e_k=e_k(x_1,x_2,\ldots,x_n)$ using Girard-Newton-Waring formula \cite[eq. 8]{Gou}, namely
\begin{equation*}
p_k = \sum_{t_1+2t_2+\ldots+kt_k=k} \frac{(-1)^{k+t_1+t_2+\cdots+t_k}\cdot k}{t_1+t_2+\cdots+t_k} \binom{t_1+t_2+\cdots+t_k}{t_1,t_2,\ldots,t_k} e_1^{t_1}e_2^{t_2}\cdots e_k^{t_k}.
\end{equation*}
There is a very similar result which combines the power sum symmetric functions $p_k=p_k(x_1,x_2,\ldots,x_n)$ and the complete homogeneous symmetric functions  $h_k=h_k(x_1,x_2,\ldots,x_n)$, i.e.,
\begin{equation*}
p_k = \sum_{t_1+2t_2+\ldots+kt_k=k} \frac{(-1)^{1+t_1+t_2+\cdots+t_k}\cdot k}{t_1+t_2+\cdots+t_k} \binom{t_1+t_2+\cdots+t_k}{t_1,t_2,\ldots,t_k} h_1^{t_1}h_2^{t_2}\cdots h_k^{t_k}.
\end{equation*}
The following two theorems provide generalizations of these relations.

\begin{theorem}\label{T2.5}
	Let $k$, $n$ and $s$ be three positive integers and let $x_1,x_2,\ldots,x_n$ be independent
	variables. The power sum symmetric function $p_k=p_k(x_1,x_2,\ldots,x_n)$ and the generalized symmetric functions $E_k^{(s)}=E_k^{(s)}(x_1,x_2,\ldots,x_n)$ and $H_k^{(s)}=H_k^{(s)}(x_1,x_2,\ldots,x_n)$ are related by
	\begin{align*}
	&  p_k = \frac{\sum\limits_{t_1+2t_2+\cdots+kt_k=k} \dfrac{(-1)^{t_1+t_2+\cdots+t_k}}{t_1+t_2+\cdots+t_k}
		\dbinom{t_1+t_2+\cdots+t_k}{t_1,t_2,\ldots,t_k} \prod\limits_{i=1}^{k} \left(E_i^{(s)} \right)^{t_i} }
	{\sum\limits_{t_1+2t_2+\cdots+st_s=k} \dfrac{(-1)^{t_1+t_2+\cdots+t_s}}{t_1+t_2+\cdots+t_s}
		\dbinom{t_1+t_2+\cdots+t_s}{t_1,t_2,\ldots,t_s}}
	\end{align*}
	and
	\begin{align*}
	&  p_k = \frac{\sum\limits_{t_1+2t_2+\cdots+kt_k=k} \dfrac{(-1)^{1+t_1+t_2+\cdots+t_k}}{t_1+t_2+\cdots+t_k}
		\dbinom{t_1+t_2+\cdots+t_k}{t_1,t_2,\ldots,t_k} \prod\limits_{i=1}^{k} \left(H_i^{(s)} \right)^{t_i} }
	{\sum\limits_{t_1+2t_2+\cdots+st_s=k} \dfrac{(-1)^{k+t_1+t_2+\cdots+t_s}}{t_1+t_2+\cdots+t_s}
		\dbinom{t_1+t_2+\cdots+t_s}{t_1,t_2,\ldots,t_s}}.
	\end{align*}
\end{theorem}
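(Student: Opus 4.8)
The plan is to realize $E_k^{(s)}$ and $H_k^{(s)}$ as ordinary elementary and complete homogeneous symmetric functions in an enlarged list of variables, and then quote the two classical Girard--Newton--Waring formulas recalled just before the theorem (being polynomial identities, these may be applied to any list of quantities). Put $\omega_j = e^{2j\pi i/(s+1)}$ for $j = 1, \ldots, s$ and let $\mathbf{y}$ be the list of the $ns$ numbers $\omega_j x_i$ with $1 \le i \le n$, $1 \le j \le s$. Replacing $t$ by $-t$ in the factorization $1 - x_i t + \cdots + (-x_i t)^s = \prod_{j=1}^{s}(1 + \omega_j x_i t)$ established in the proof of Theorem~\ref{T2.3}, one gets $1 + x_i t + \cdots + (x_i t)^s = \prod_{j=1}^{s}(1 - \omega_j x_i t)$, so that \eqref{Eq2} becomes $\sum_{k \ge 0} E_k^{(s)}(x_1, \ldots, x_n)\, t^k = \prod_{i,j}(1 - \omega_j x_i t)$, whence $E_k^{(s)}(x_1, \ldots, x_n) = (-1)^k e_k(\mathbf{y})$. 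Similarly \eqref{Eq1} becomes $\sum_{k \ge 0} H_k^{(s)}(x_1, \ldots, x_n)\, t^k = \prod_{i,j}\bigl(1 - (-\omega_j x_i) t\bigr)^{-1}$, whence $H_k^{(s)}(x_1, \ldots, x_n) = h_k(-\mathbf{y})$, where $-\mathbf{y}$ is the list of the $-\omega_j x_i$. Finally $p_k(\mathbf{y}) = \bigl(\sum_{j=1}^{s}\omega_j^k\bigr) p_k(x_1, \ldots, x_n)$ and $p_k(-\mathbf{y}) = (-1)^k\bigl(\sum_{j=1}^{s}\omega_j^k\bigr) p_k(x_1, \ldots, x_n)$, and, as recorded in the proof of Theorem~\ref{T2.3}, $\sum_{j=1}^{s}\omega_j^k$ equals $s$ when $(s+1) \mid k$ and $-1$ otherwise; in particular it is never zero, so the divisions below make sense.

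For the first identity, apply the Girard--Newton--Waring formula expressing $p_k$ through the elementary symmetric functions to the $ns$ numbers $\mathbf{y}$. Using $e_i(\mathbf{y}) = (-1)^i E_i^{(s)}$ together with $\prod_{i=1}^{k}(-1)^{i t_i} = (-1)^{t_1 + 2t_2 + \cdots + kt_k} = (-1)^k$ (valid on the index set of the sum), the sign $(-1)^{k + t_1 + \cdots + t_k}$ collapses to $(-1)^{t_1 + \cdots + t_k}$, and one obtains
\[
\Bigl(\sum_{j=1}^{s}\omega_j^k\Bigr) p_k = k \sum_{t_1 + 2t_2 + \cdots + kt_k = k} \frac{(-1)^{t_1 + t_2 + \cdots + t_k}}{t_1 + t_2 + \cdots + t_k}\binom{t_1 + t_2 + \cdots + t_k}{t_1, t_2, \ldots, t_k} \prod_{i=1}^{k}\bigl(E_i^{(s)}\bigr)^{t_i}.
\]
Running the same computation with $n = 1$ and $x_1 = 1$ (equivalently, applying the elementary Girard--Newton--Waring formula to $\omega_1, \ldots, \omega_s$ alone, for which $e_i = (-1)^i$ when $i \le s$ and $e_i = 0$ when $i > s$, so that only parts $\le s$ survive) gives
\[
\sum_{j=1}^{s}\omega_j^k = k \sum_{t_1 + 2t_2 + \cdots + st_s = k} \frac{(-1)^{t_1 + t_2 + \cdots + t_s}}{t_1 + t_2 + \cdots + t_s}\binom{t_1 + t_2 + \cdots + t_s}{t_1, t_2, \ldots, t_s}.
\]
Dividing the first display by the second proves the first formula.

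For the second identity, repeat the argument with $-\mathbf{y}$ in place of $\mathbf{y}$. Applying the Girard--Newton--Waring formula expressing $p_k$ through the complete homogeneous symmetric functions to the $ns$ numbers $-\omega_j x_i$, and using $h_i(-\mathbf{y}) = H_i^{(s)}$ directly (no extra sign enters), we get
\[
(-1)^k\Bigl(\sum_{j=1}^{s}\omega_j^k\Bigr) p_k = k \sum_{t_1 + 2t_2 + \cdots + kt_k = k} \frac{(-1)^{1 + t_1 + t_2 + \cdots + t_k}}{t_1 + t_2 + \cdots + t_k}\binom{t_1 + t_2 + \cdots + t_k}{t_1, t_2, \ldots, t_k} \prod_{i=1}^{k}\bigl(H_i^{(s)}\bigr)^{t_i}.
\]
On the other hand, applying the elementary Girard--Newton--Waring formula to $-\omega_1, \ldots, -\omega_s$ (where $e_i = 1$ for $i \le s$ and $e_i = 0$ for $i > s$) gives
\[
(-1)^k\sum_{j=1}^{s}\omega_j^k = \sum_{j=1}^{s}(-\omega_j)^k = k \sum_{t_1 + 2t_2 + \cdots + st_s = k} \frac{(-1)^{k + t_1 + t_2 + \cdots + t_s}}{t_1 + t_2 + \cdots + t_s}\binom{t_1 + t_2 + \cdots + t_s}{t_1, t_2, \ldots, t_s}.
\]
Dividing the two last displays yields the second formula, and the theorem follows.

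The key idea -- linearizing $E_k^{(s)}$ and $H_k^{(s)}$ through the $(s+1)$st roots of unity -- is already present in the proof of Theorem~\ref{T2.3}, so the only delicate part of the argument is the sign bookkeeping: one must track the factor $(-1)^{t_1 + 2t_2 + \cdots + kt_k} = (-1)^k$ that appears when $e_i(\mathbf{y})$ is rewritten as $(-1)^i E_i^{(s)}$, together with its matching cancellation in the denominator, and note that the $H$-version is built from $-\mathbf{y}$ rather than from $\mathbf{y}$, which is exactly what produces the extra $(-1)^k$ in the denominator of the second formula. Everything else reduces to a direct appeal to the two classical Girard--Newton--Waring identities quoted just before the theorem and to the power-sum-of-roots-of-unity evaluations established in the proof of Theorem~\ref{T2.3}.
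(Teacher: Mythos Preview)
Your proof is correct but takes a genuinely different route from the paper. The paper expands $\ln\bigl(\sum_{k\ge 0} E_k^{(s)} t^k\bigr)$ in two ways: once via the product form $\prod_i(1+x_it+\cdots+(x_it)^s)$, using the logarithmic series on each factor to produce the $p_k$'s and the denominator sum, and once via the logarithmic series applied directly to $1+\sum_{k\ge 1} E_k^{(s)} t^k$ to produce the numerator sum; equating coefficients of $t^k$ then gives the first identity, and the $H$-case is handled analogously. You instead observe that $E_k^{(s)}=(-1)^k e_k(\mathbf{y})$ and $H_k^{(s)}=h_k(-\mathbf{y})$ in the enlarged alphabet $\mathbf{y}=\{\omega_j x_i\}$, and then simply invoke the two classical Girard--Newton--Waring formulas as black boxes, with the denominator emerging from the specialization $n=1$, $x_1=1$. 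Your argument is shorter and more conceptual, and it recovers Corollary~\ref{C2.7} for free as the evaluation of the denominator; the paper's approach, on the other hand, is self-contained in that it re-derives the Waring-type expansions from scratch rather than quoting them, and it simultaneously produces the auxiliary identities \eqref{Eq2.10} and \eqref{Eq2.11} that are reused in the proof of Theorem~\ref{T2.6}.
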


\begin{proof}
	\allowdisplaybreaks{
		Considering \eqref{Eq2} and the logarithmic series
		$$ \ln(1+t) = \sum_{n=1}^\infty (-1)^{n-1} \frac{t^n}{n},\qquad |t|<1,$$
		we can write
		\begin{align*}
		& \ln\left(\sum_{k=0}^\infty E_k^{(s)}(x_1,x_2,\ldots,x_n) t^k \right) \\
		& \quad = \ln \prod_{i=1}^n \big(1+x_it+\cdots +(x_i t)^s\big)\notag \\
		& \quad = \sum_{i=1}^{n} \ln \big(1+x_it+\cdots +(x_it)^s\big)\notag \\
		& \quad = \sum_{i=1}^{n} \sum_{m=1}^\infty \frac{(-1)^{m-1}}{m} \left( \sum_{j=1}^s (x_i t)^j \right)^m\notag \\
		& \quad = \sum_{m=1}^\infty \frac{(-1)^{m-1}}{m} \sum_{i=1}^{n} \sum_{k=m}^{sm} \sum_{\substack{t_1+t_2+\cdots+t_s=m\\t_2+2t_2+\cdots st_s = k}}
		\binom{t_1+t_2+\cdots+t_s}{t_1,t_2,\ldots,t_s} (x_i t)^k\notag \\
		& \quad = \sum_{m=1}^\infty \frac{(-1)^{m-1}}{m} \sum_{k=m}^{sm} \sum_{\substack{t_1+t_2+\cdots+t_s=m\\t_2+2t_2+\cdots st_s = k}}
		\binom{t_1+t_2+\cdots+t_s}{t_1,t_2,\ldots,t_s} p_k(x_1,x_2,\ldots,x_n) t^k\notag \\
		& \quad =\sum_{k=1}^\infty \sum_{t_1+2t_2+\cdots+st_s=k} \frac{(-1)^{1+t_1+t_2+\cdots+t_s}}{t_1+t_2+\cdots+t_s}
		\binom{t_1+t_2+\cdots+t_s}{t_1,t_2,\ldots,t_s} p_k(x_1,x_2,\ldots,x_n) t^k.\notag
		\end{align*}
		On the other hand, we have
		\begin{align}
		& \ln\left(1+\sum_{k=1}^\infty E_k^{(s)}(x_1,x_2,\ldots,x_n) t^k \right)\label{Eq2.10} \\
		&  \quad = \sum_{m=1}^\infty \frac{(-1)^{m-1}}{m} \left( \sum_{k=1}^\infty E_k^{(s)}(x_1,x_2,\ldots,x_n) t^k \right)^m \notag \\
		&  \quad = \sum_{m=1}^\infty \frac{(-1)^{m-1}}{m} \sum_{k=1}^\infty
		\sum_{\substack{t_1+t_2+\cdots+t_k=m\\t_1+2t_2+\cdots+kt_k=k}}
		\binom{t_1+t_2+\cdots+t_k}{t_1,t_2,\ldots,t_k}
		\prod_{i=1}^k \left( E_i^{(s)} \right)^{t_i} t^k\notag \\
		&  \quad = \sum_{k=1}^\infty \sum_{t_1+2t_2+\cdots+kt_k=k}
		\frac{(-1)^{1+t_1+t_2+\cdots+t_k}}{t_1+t_2+\cdots+t_k}
		\binom{t_1+t_2+\cdots+t_k}{t_1,t_2,\ldots,t_k}
		\prod_{i=1}^k \left( E_i^{(s)} \right)^{t_i} t^k.\notag
		\end{align}
		and the first identity follows easily. In a similar way, considering \eqref{Eq1} we can prove the second identity. We obtain
		\begin{align*}
		& \ln\left(\sum_{k=0}^\infty H_k^{(s)}(x_1,x_2,\ldots,x_n) t^k \right) \\
		& \quad = \ln \prod_{i=1}^n \big(1+(-x_it)+\cdots +(-x_i t)^s\big)^{-1} \\
		& \quad = -\sum_{i=1}^{n} \sum_{m=1}^\infty \frac{(-1)^{m-1}}{m} \left( \sum_{j=1}^s (-x_i t)^j \right)^m\\
		& \quad = \sum_{m=1}^\infty \frac{(-1)^{m}}{m} \sum_{i=1}^{n} \sum_{k=m}^{sm} \sum_{\substack{t_1+t_2+\cdots+t_s=m\\t_2+2t_2+\cdots st_s = k}}
		\binom{t_1+t_2+\cdots+t_s}{t_1,t_2,\ldots,t_s} (-x_i t)^k\\
		& \quad = \sum_{m=1}^\infty \frac{(-1)^{m}}{m} \sum_{k=m}^{sm} \sum_{\substack{t_1+t_2+\cdots+t_s=m\\t_2+2t_2+\cdots st_s = k}}
		\binom{t_1+t_2+\cdots+t_s}{t_1,t_2,\ldots,t_s} p_k(x_1,x_2,\ldots,x_n) (-t)^k\\
		& \quad =\sum_{k=1}^\infty \sum_{t_1+2t_2+\cdots+st_s=k} \frac{(-1)^{k+t_1+t_2+\cdots+t_s}}{t_1+t_2+\cdots+t_s}
		\binom{t_1+t_2+\cdots+t_s}{t_1,t_2,\ldots,t_s} p_k(x_1,x_2,\ldots,x_n) t^k
		\end{align*}
		and
		\begin{align}
		& \ln\left(1+\sum_{k=1}^\infty H_k^{(s)}(x_1,x_2,\ldots,x_n) t^k \right)\label{Eq2.11} \\
		& = \sum_{k=1}^\infty \sum_{t_1+2t_2+\cdots+kt_k=k}
	    \frac{(-1)^{1+t_1+t_2+\cdots+t_k}}{t_1+t_2+\cdots+t_k}
		\binom{t_1+t_2+\cdots+t_k}{t_1,t_2,\ldots,t_k}
		\prod_{i=1}^k \left( H_i^{(s)} \right)^{t_i} t^k.\notag
		\end{align}
		The proof is finished.
	}	
\end{proof}

\begin{theorem}\label{T2.6}
	Let $k$, $n$ and $s$ be three positive integers and let $x_1,x_2,\ldots,x_n$ be independent
	variables. The generalized symmetric functions $E_k^{(s)}=E_k^{(s)}(x_1,x_2,\ldots,x_n)$, $H_k^{(s)}=H_k^{(s)}(x_1,x_2,\ldots,x_n)$ and $P_k^{(s)}=P_k^{(s)}(x_1,x_2,\ldots,x_n)$ are related by
	$$
	P_k^{(s)} = \sum_{t_1+2t_2+\ldots+kt_k=k} \frac{(-1)^{1+t_1+t_2+\cdots+t_k}\cdot k}{t_1+t_2+\cdots+t_k} \binom{t_1+t_2+\cdots+t_k}{t_1,t_2,\ldots,t_k} \prod_{i=1}^k \left(H_i^{(s)}\right)^{t_i}
	$$
	and
	$$
	P_k^{(s)} = \sum_{t_1+2t_2+\ldots+kt_k=k} \frac{(-1)^{k+t_1+t_2+\cdots+t_k}\cdot k}{t_1+t_2+\cdots+t_k} \binom{t_1+t_2+\cdots+t_k}{t_1,t_2,\ldots,t_k} \prod_{i=1}^k \left(E_i^{(s)}\right)^{t_i}.
	$$
\end{theorem}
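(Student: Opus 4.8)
The plan is to reuse the logarithmic generating-function identities already worked out in the proofs of Theorems \ref{T2.3} and \ref{T2.5}, so that Theorem \ref{T2.6} follows by a short coefficient comparison rather than any fresh computation. The two facts I would quote are: first, the chain of equalities in the proof of Theorem \ref{T2.3} (culminating in \eqref{Eq2.8} and rewritten via \eqref{E7}), which gives, with all symmetric functions evaluated at $x_1,\dots,x_n$,
\[
\sum_{k=1}^{\infty} P_k^{(s)}\, t^{k-1}
 = \frac{d}{dt}\ln\!\left(\sum_{k=0}^{\infty} H_k^{(s)}\, t^{k}\right)
 = -\frac{d}{dt}\ln\!\left(\sum_{k=0}^{\infty} (-1)^{k} E_k^{(s)}\, t^{k}\right);
\]
and second, the explicit expansions \eqref{Eq2.10} and \eqref{Eq2.11} of $\ln\bigl(1+\sum_{k\ge1}E_k^{(s)}t^{k}\bigr)$ and $\ln\bigl(1+\sum_{k\ge1}H_k^{(s)}t^{k}\bigr)$ as power series in $t$ with coefficients given by sums over the partition data $t_1+2t_2+\cdots+kt_k=k$.

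For the first identity of Theorem \ref{T2.6} I would differentiate \eqref{Eq2.11} term by term: since the right-hand side of \eqref{Eq2.11} is a power series $\sum_{k\ge1}c_k t^{k}$ with $c_k=\sum_{t_1+2t_2+\cdots+kt_k=k}\frac{(-1)^{1+t_1+\cdots+t_k}}{t_1+\cdots+t_k}\binom{t_1+\cdots+t_k}{t_1,\dots,t_k}\prod_{i=1}^{k}\bigl(H_i^{(s)}\bigr)^{t_i}$, its derivative is $\sum_{k\ge1}k c_k t^{k-1}$, and its left-hand side has derivative $\frac{d}{dt}\ln\sum_{k\ge0}H_k^{(s)}t^{k}$. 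Comparing with the displayed identity above yields $P_k^{(s)}=k c_k$, which is precisely the claimed formula in terms of the $H_i^{(s)}$. No integration or constant-of-integration argument is needed because I differentiate rather than integrate.

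For the second identity the only extra move is the substitution $t\mapsto -t$ in \eqref{Eq2.10}. Because the $E_i^{(s)}$ carry no dependence on $t$, this substitution replaces the left-hand side of \eqref{Eq2.10} by $\ln\bigl(1+\sum_{k\ge1}(-1)^{k}E_k^{(s)}t^{k}\bigr)$ and multiplies the coefficient of $t^{k}$ on the right by $(-1)^{k}$, i.e. it turns the exponent $1+t_1+\cdots+t_k$ of $-1$ into $1+k+t_1+\cdots+t_k$. Differentiating term by term, using the equality $\frac{d}{dt}\ln\sum_{k\ge0}(-1)^{k}E_k^{(s)}t^{k}=-\sum_{k\ge1}P_k^{(s)}t^{k-1}$ from the display above, and simplifying the sign via $-(-1)^{k}(-1)^{1+t_1+\cdots+t_k}=(-1)^{k+t_1+\cdots+t_k}$, one reads off the second asserted formula.

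The whole argument is routine at the level of formal power series, where term-by-term differentiation and comparison of coefficients are legitimate and no convergence issue arises. The only place calling for genuine care is the sign bookkeeping in the $E$-case: one must track both the outer minus sign coming from the exponent $-1$ in \eqref{E7} and the factor $(-1)^{k}$ introduced by $t\mapsto -t$, and check that together they convert $(-1)^{1+t_1+\cdots+t_k}$ into $(-1)^{k+t_1+\cdots+t_k}$ exactly as stated. I expect this sign reconciliation to be the main (and essentially only) obstacle.
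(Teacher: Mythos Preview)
Your proposal is correct and follows essentially the same route as the paper: both proofs differentiate the logarithmic expansions \eqref{Eq2.10} and \eqref{Eq2.11} and compare with the generating function for $P_k^{(s)}$ established in the proof of Theorem~\ref{T2.3}. The only cosmetic difference is in the $E$-case: the paper differentiates \eqref{Eq2.10} as it stands and invokes the identity $\frac{d}{dt}\ln\sum_{k\ge0}E_k^{(s)}t^{k}=\sum_{k\ge1}(-1)^{k-1}P_k^{(s)}t^{k-1}$ directly, whereas you first substitute $t\mapsto -t$ and then use \eqref{E7}; the sign bookkeeping comes out the same either way.
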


\begin{proof}
	According to \eqref{Eq2.8}, \eqref{Eq2.10} and \eqref{Eq2.11}, we have
	\begin{align*}
	& \sum_{k=1}^\infty P_k^{(s)} (x_1,x_2,\ldots,x_n) t^{k-1}\\
	& \quad = \frac{d}{dt} \ln \prod_{i=1}^{n}\big(1-x_{i}t+\cdots +(-x_it)^s\big)^{-1} \\
	& \quad = \sum_{k=1}^\infty \sum_{t_1+2t_2+\cdots+kt_k=k}
	\frac{(-1)^{1+t_1+t_2+\cdots+t_k}\cdot k}{t_1+t_2+\cdots+t_k}
	\binom{t_1+t_2+\cdots+t_k}{t_1,t_2,\ldots,t_k}
	\prod_{i=1}^k \left( H_i^{(s)} \right)^{t_i} t^{k-1}
	\end{align*}
and
	\begin{align*}
	& \sum_{k=1}^\infty (-1)^{k-1}P_k^{(s)} (x_1,x_2,\ldots,x_n) t^{k-1}\\
	& \quad = \frac{d}{dt} \ln\left(\sum_{k=0}^\infty E_k^{(s)}(x_1,x_2,\ldots,x_n) t^k \right)\\
	& \quad = \sum_{k=1}^\infty \sum_{t_1+2t_2+\cdots+kt_k=k}
	\frac{(-1)^{1+t_1+t_2+\cdots+t_k}\cdot k}{t_1+t_2+\cdots+t_k}
	\binom{t_1+t_2+\cdots+t_k}{t_1,t_2,\ldots,t_k}
	\prod_{i=1}^k \left( E_i^{(s)} \right)^{t_i} t^{k-1}.
	\end{align*}
	These conclude the proof.
\end{proof}

As a consequence of Theorems \ref{T2.5} and \ref{T2.6}, we remark the following family of identities.

\begin{corollary}\label{C2.7}
	Let $k$ and $s$ be two positive integers. Then
	$$
	\sum_{t_1+2t_2+\cdots+st_s=k}
	\frac{(-1)^{t_1+t_2+\cdots+t_s}\cdot k}{t_1+t_2+\cdots+t_s}
	\binom{t_1+t_2+\cdots+t_s}{t_1,t_2,\ldots,t_s}
	= \begin{cases}
	s,& \text{if $k\equiv 0 \bmod {(s+1)}$,}\\
	-1, & \text{otherwise.}
	\end{cases}
	$$
\end{corollary}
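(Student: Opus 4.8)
The plan is to identify the left-hand side, up to the factor $-k$, with the coefficient of $t^{k}$ in the Taylor expansion of $\ln\big(1+t+t^{2}+\cdots+t^{s}\big)$, and then to read that coefficient off from the closed form $\ln\frac{1-t^{s+1}}{1-t}$. The needed expansion is exactly the $n=1$, $x_{1}=1$ specialization of the logarithmic computation carried out in the proof of Theorem~\ref{T2.5}: since $\sum_{k\geq 0}E_{k}^{(s)}(1)\,t^{k}=1+t+\cdots+t^{s}$ and $p_{k}(1)=1$, that computation gives
\begin{equation*}
\ln\big(1+t+\cdots+t^{s}\big)
=\sum_{k\geq 1}\Bigg(\sum_{t_{1}+2t_{2}+\cdots+st_{s}=k}\frac{(-1)^{1+t_{1}+t_{2}+\cdots+t_{s}}}{t_{1}+t_{2}+\cdots+t_{s}}\binom{t_{1}+t_{2}+\cdots+t_{s}}{t_{1},t_{2},\ldots,t_{s}}\Bigg)t^{k}.
\end{equation*}
(One may also obtain this directly by substituting $u=t+t^{2}+\cdots+t^{s}$ into $\ln(1+u)=\sum_{m\geq 1}(-1)^{m-1}u^{m}/m$ and applying the multinomial theorem, collecting powers of $t$ according to $t_{1}+2t_{2}+\cdots+st_{s}$.) Because $(-1)^{t_{1}+\cdots+t_{s}}=-(-1)^{1+t_{1}+\cdots+t_{s}}$, the sum in the corollary is precisely $-k$ times the bracketed coefficient above, i.e. $-\big[t^{k}\big]\big(t\,\tfrac{d}{dt}\ln(1+t+\cdots+t^{s})\big)$.

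It then remains to compute this coefficient. Writing $1+t+\cdots+t^{s}=\dfrac{1-t^{s+1}}{1-t}$ and differentiating,
\begin{equation*}
t\,\frac{d}{dt}\,\ln\frac{1-t^{s+1}}{1-t}
=\frac{t}{1-t}-\frac{(s+1)\,t^{s+1}}{1-t^{s+1}}
=\sum_{k\geq 1}t^{k}-(s+1)\sum_{m\geq 1}t^{(s+1)m},
\end{equation*}
so the coefficient of $t^{k}$ equals $1-(s+1)$ if $(s+1)\mid k$ and $1$ otherwise. Negating, the sum in the corollary equals $s$ when $k\equiv 0\pmod{s+1}$ and $-1$ otherwise, as claimed. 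This is of course consistent with the value $p_{k}(\omega_{1,s+1},\ldots,\omega_{s,s+1})$ invoked in the proof of Theorem~\ref{T2.3}, and with the relation $P_{k}^{(s)}=c_{k}^{(s)}p_{k}$ specialized to the nontrivial $(s+1)$-th roots of unity.

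There is no genuine difficulty here once the generating-function identity is set up; the argument is a single coefficient extraction. The only points requiring care are purely bookkeeping: matching the exponent $1+t_{1}+\cdots+t_{s}$ coming from the logarithm against the exponent $t_{1}+\cdots+t_{s}$ in the statement, recognizing that the extra factor $k$ is produced by the operator $t\,\tfrac{d}{dt}$, and noting that summing over compositions $t_{1}+2t_{2}+\cdots+kt_{k}=k$ with all parts at most $s$ (as in Theorem~\ref{T2.5}) is the same as summing over $t_{1}+2t_{2}+\cdots+st_{s}=k$.
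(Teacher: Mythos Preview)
Your proof is correct. It is the $n=1$, $x_{1}=1$ specialization of the logarithmic computation underlying Theorems~\ref{T2.5} and~\ref{T2.6}, and that specialization is exactly what is needed since the corollary's sum has no $x$-variables left in it.

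The route is slightly different from the paper's, and a bit more direct. The paper deduces Corollary~\ref{C2.7} by combining Theorems~\ref{T2.5} and~\ref{T2.6}: Theorem~\ref{T2.5} writes $p_{k}$ as a ratio whose denominator is the sum in question divided by $k$, while Theorem~\ref{T2.6} identifies the numerator with $(-1)^{k}P_{k}^{(s)}/k=(-1)^{k}c_{k}^{(s)}p_{k}/k$; canceling $p_{k}$ yields $(-1)^{k}c_{k}^{(s)}$, whose values were obtained earlier via the roots-of-unity computation $p_{k}(\omega_{1,s+1},\ldots,\omega_{s,s+1})$ in the proof of Theorem~\ref{T2.3}. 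You bypass both the general symmetric-function identities and the roots of unity by working directly with the single-variable generating function: once the sum is recognized as $-k\,[t^{k}]\ln(1+t+\cdots+t^{s})$, the closed form $\ln\frac{1-t^{s+1}}{1-t}$ gives the answer immediately. The two arguments are equivalent at heart (factoring $1+t+\cdots+t^{s}$ over $\mathbb{C}$ versus summing the geometric series), but yours is the more elementary packaging.
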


It is well-known that the complete and elementary symmetric functions can be expressed in terms of the power sum symmetric functions, i.e.,
\begin{equation}\label{Eqp1}
h_k = \sum_{t_1+2t_2+\ldots+kt_k=k} \frac{1}{1^{t_1}t_1! 2^{t_2}t_2! \cdots k^{t_k}t_k!}
p_1^{t_1}p_2^{t_2}\cdots p_k^{t_k}
\end{equation}
and
\begin{equation}\label{Eqp2}
e_k = \sum_{t_1+2t_2+\ldots+kt_k=k} \frac{(-1)^{k+t_1+t_2+\cdots+t_k}}{1^{t_1}t_1! 2^{t_2}t_2! \cdots k^{t_k}t_k!}  p_1^{t_1}p_2^{t_2}\cdots p_k^{t_k}.
\end{equation}

The following result provides a generalization of these relations.
	
\begin{theorem}\label{T2.7}
	Let $k$, $n$ and $s$ be three positive integers and let $x_1,x_2,\ldots,x_n$ be independent
	variables. The generalized symmetric functions $E_k^{(s)}=E_k^{(s)}(x_1,x_2,\ldots,x_n)$, $H_k^{(s)}=H_k^{(s)}(x_1,x_2,\ldots,x_n)$ and $P_k^{(s)}=P_k^{(s)}(x_1,x_2,\ldots,x_n)$ are related by
	$$
	H_k^{(s)} = \sum_{t_1+2t_2+\ldots+kt_k=k} \frac{1}{1^{t_1}t_1! 2^{t_2}t_2! \cdots k^{t_k}t_k!}  \prod_{i=1}^k \left(P_i^{(s)}\right)^{t_i}
	$$
	and
	$$
	E_k^{(s)} = \sum_{t_1+2t_2+\ldots+kt_k=k} \frac{(-1)^{k+t_1+t_2+\cdots+t_k}}{1^{t_1}t_1! 2^{t_2}t_2! \cdots k^{t_k}t_k!}  \prod_{i=1}^k \left(P_i^{(s)}\right)^{t_i}.
	$$
\end{theorem}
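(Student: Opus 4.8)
The plan is to exploit the logarithmic generating-function identity already established inside the proof of Theorem~\ref{T2.3}. Equation~\eqref{Eq2.8} shows that
$$\sum_{k=1}^\infty P_k^{(s)}(x_1,x_2,\ldots,x_n)\, t^{k-1} = \frac{d}{dt}\ln\prod_{i=1}^{n}\big(1-x_{i}t+\cdots +(-x_it)^s\big)^{-1} = \frac{d}{dt}\ln\left(\sum_{k=0}^\infty H_k^{(s)}(x_1,x_2,\ldots,x_n)\, t^k\right).$$
Integrating this identity of formal power series term by term, and using that $\sum_{k\ge0}H_k^{(s)}t^k$ has constant term $H_0^{(s)}=1$ so that its logarithm vanishes at $t=0$, I would obtain
$$\ln\left(\sum_{k=0}^\infty H_k^{(s)}t^k\right) = \sum_{k=1}^\infty \frac{P_k^{(s)}}{k}\,t^k.$$

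Next I would exponentiate. Writing the right-hand side as an infinite sum and using $\exp(a+b)=\exp(a)\exp(b)$ for formal power series, this gives
$$\sum_{k=0}^\infty H_k^{(s)}t^k = \prod_{j=1}^\infty \exp\!\left(\frac{P_j^{(s)}}{j}\,t^j\right) = \prod_{j=1}^\infty \sum_{t_j=0}^\infty \frac{1}{t_j!}\left(\frac{P_j^{(s)}}{j}\right)^{t_j} t^{j t_j}.$$
Extracting the coefficient of $t^k$ — only tuples with $t_1+2t_2+\cdots+kt_k=k$ contribute, and each $t_j$ with $j>k$ must vanish — yields
$$H_k^{(s)} = \sum_{t_1+2t_2+\cdots+kt_k=k} \prod_{i=1}^k \frac{1}{i^{t_i}t_i!}\left(P_i^{(s)}\right)^{t_i},$$
which is the first claimed identity after rewriting $\prod_{i=1}^k i^{t_i}t_i! = 1^{t_1}t_1!\, 2^{t_2}t_2!\cdots k^{t_k}t_k!$.

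For the second identity I would pass through the reciprocal relation. From the identity preceding \eqref{E7} (equivalently, from Theorem~\ref{T2.1} in the case $n>0$), $\left(\sum_{k\ge0}(-1)^kE_k^{(s)}t^k\right)\left(\sum_{k\ge0}H_k^{(s)}t^k\right)=1$, so taking logarithms gives $\ln\sum_{k\ge0}(-1)^kE_k^{(s)}t^k = -\sum_{k\ge1}\frac{P_k^{(s)}}{k}t^k$. Replacing $t$ by $-t$ and simplifying, $\ln\sum_{k\ge0}E_k^{(s)}t^k = \sum_{k\ge1}(-1)^{k-1}\frac{P_k^{(s)}}{k}t^k$. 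Exponentiating and extracting the coefficient of $t^k$ exactly as above produces
$$E_k^{(s)} = \sum_{t_1+2t_2+\cdots+kt_k=k} \prod_{i=1}^k \frac{\big((-1)^{i-1}\big)^{t_i}}{i^{t_i}t_i!}\left(P_i^{(s)}\right)^{t_i}.$$
The one point requiring a small computation is the sign: since $\sum_i i\,t_i = k$, one has $\prod_i\big((-1)^{i-1}\big)^{t_i} = (-1)^{\sum_i(i-1)t_i} = (-1)^{k-(t_1+\cdots+t_k)} = (-1)^{k+t_1+\cdots+t_k}$, which is the exponent appearing in the statement. I expect the main (though still routine) obstacle to be the bookkeeping: justifying the term-by-term integration and the factorization of the exponential as formal power series, and carrying out this sign simplification; beyond that, the argument is a direct transcription of the classical derivation of \eqref{Eqp1}--\eqref{Eqp2} with $P_k^{(s)}$ playing the role of $p_k$.
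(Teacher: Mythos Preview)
Your proposal is correct and follows essentially the same route as the paper: both arguments rest on the identities $\ln\sum_{k\ge0}H_k^{(s)}t^k=\sum_{k\ge1}\frac{P_k^{(s)}}{k}t^k$ and $\ln\sum_{k\ge0}E_k^{(s)}t^k=\sum_{k\ge1}(-1)^{k-1}\frac{P_k^{(s)}}{k}t^k$, then exponentiate and extract coefficients. The only cosmetic differences are that the paper states these logarithmic identities directly (they are implicit in the proof of Theorem~\ref{T2.3}) and expands $\exp(\sum_k \frac{t^k}{k}P_k^{(s)})$ via the exponential series together with the multinomial theorem, whereas you integrate \eqref{Eq2.8} to obtain the first identity, pass through Theorem~\ref{T2.1} for the second, and expand the exponential as the product $\prod_j\exp(P_j^{(s)}t^j/j)$; these are equivalent presentations of the same computation.
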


\begin{proof}
	In order to prove this identity, we take into account the following two relations:
	$$
	\ln \left( \sum_{k=0}^\infty H_k^{(s)} t^k \right) = \sum_{k=1}^\infty \frac{t^k}{k} P_k^{(s)}
	\qquad\text{and}\qquad
	\ln \left( \sum_{k=0}^\infty E_k^{(s)} t^k \right) = \sum_{k=1}^\infty (-1)^{k-1} \frac{t^k}{k} P_k^{(s)}
	$$	
	Considering the exponential series
	$$\exp(z) = \sum_{k=0}^\infty \frac{z^k}{k!},\qquad |z|<1,$$
	we can write
	\begin{align*}
	& \sum_{k=0}^\infty H_k^{(s)} t^k \\
	& \quad = \exp\left( \sum_{k=1}^\infty \frac{t^k}{k!} P_k^{(s)}\right) \\
	& \quad = \sum_{m=0}^\infty \frac{1}{m!} \left( \sum_{k=1}^\infty \frac{t^k}{k!} P_k^{(s)} \right)^m\\
	& \quad = \sum_{m=0}^\infty \frac{1}{m!} \sum_{k=1}^\infty
	\sum_{\substack{t_1+t_2+\cdots t_k=m \\ t_1+2t_2+\cdots+kt_k=k}}
	\binom{t_1+t_2+\cdots+t_k}{t_1+t_2+\cdots+t_k} \prod_{i=1}^{k} \left( \frac{t^i}{i} P_i^{(s)} \right)^{t_i}\\
	& \quad = \sum_{k=1}^\infty \sum_{t_1+2t_2+\cdots+kt_k=k}
	\frac{1}{(t_1+t_2+\cdots+t_k)!} \binom{t_1+t_2+\cdots+t_k}{t_1+t_2+\cdots+t_k}
	\prod_{i=1}^{k} \left( \frac{1}{i} P_i^{(s)} \right)^{t_i} t^k\\
	& \quad = \sum_{k=1}^\infty \sum_{t_1+2t_2+\cdots+kt_k=k}
	\frac{1}{1^{t_1}t_1! 2^{t_2}t_2! \cdots k^{t_k}t_k!}
	\prod_{i=1}^k \left(P_i^{(s)}\right)^{t_i} t^k
	\end{align*}
	and
	\begin{align*}
	& \sum_{k=0}^\infty E_k^{(s)} t^k \\
	& \quad = \exp\left( \sum_{k=1}^\infty (-1)^{k-1} \frac{t^k}{k!} P_k^{(s)}\right) \\
	& \quad = \sum_{m=0}^\infty \frac{1}{m!}
	\left( \sum_{k=1}^\infty (-1)^{k-1} \frac{t^k}{k!} P_k^{(s)} \right)^m\\
	& \quad = \sum_{m=0}^\infty \frac{1}{m!} \sum_{k=1}^\infty
	\sum_{\substack{t_1+t_2+\cdots t_k=m \\ t_1+2t_2+\cdots+kt_k=k}}
	\binom{t_1+t_2+\cdots+t_k}{t_1+t_2+\cdots+t_k}
	\prod_{i=1}^{k} \left( (-1)^{i-1} \frac{t^i}{i} P_i^{(s)} \right)^{t_i}\\
	& \quad = \sum_{k=1}^\infty \sum_{t_1+2t_2+\cdots+kt_k=k}
	\frac{(-1)^{k+t_1+t_2+\cdots+t_k}}{1^{t_1}t_1! 2^{t_2}t_2! \cdots k^{t_k}t_k!}
	\prod_{i=1}^k \left(P_i^{(s)}\right)^{t_i} t^k.
	\end{align*}
	Thus we arrive at our identities.
\end{proof}

At the end of this section, we remark the following recurrence relations for the generalized symmetric functions $E_k^{(s)}$ and $H_k^{(s)}$.

\begin{theorem}
	Let $k$, $n$ and $s$ be three positive integers and let $x_1,x_2,\ldots,x_n$ be independent
	variables. Then
	\begin{align*}
	H_k^{(s)}(x_1,x_2,\ldots,x_n)
	& =(-x_n)^{s+1} H_{k-s-1}^{(s)}(x_1,x_2,\ldots,x_n) \\
	& \qquad + H_k^{(s)}(x_1,x_2,\ldots,x_{n-1})
	+ x_nH_{k-1}^{(s)}(x_1,x_2,\ldots,x_{n-1})
	\end{align*}
	and
	\begin{align*}
	E_k^{(s)}(x_1,x_2,\ldots,x_n)
	& =x_n E_{k-1}^{(s)}(x_1,x_2,\ldots,x_n) \\
	& \quad + E_k^{(s)}(x_1,x_2,\ldots,x_{n-1})
	-x_n^{s+1} E_{k-s-1}^{(s)}(x_1,x_2,\ldots,x_{n-1}).
	\end{align*}
\end{theorem}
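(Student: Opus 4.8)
The plan is to read both recurrences straight off the defining generating functions \eqref{Eq1} and \eqref{Eq2}, by isolating the single factor attached to the last variable $x_n$ and using the elementary identity $1+y+\cdots+y^{s}=(1-y^{s+1})/(1-y)$, which is valid in the ring of formal power series. Throughout, the computations take place in $\mathbb{Z}[x_1,\ldots,x_n][[t]]$, so the geometric-series rewriting and the subsequent clearing of denominators are legitimate, and we adopt the boundary conventions $E_j^{(s)}=H_j^{(s)}=0$ for $j<0$ so that the stated identities hold for every $k\ge 1$, including the small values of $k$ for which $k-1$ or $k-s-1$ is negative.

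First I would treat the elementary case. Splitting off the factor $i=n$ in \eqref{Eq2} gives
\[
\sum_{k\ge 0} E_k^{(s)}(x_1,\ldots,x_n)\,t^k=\bigl(1+x_nt+\cdots+(x_nt)^{s}\bigr)\sum_{k\ge 0} E_k^{(s)}(x_1,\ldots,x_{n-1})\,t^k .
\]
Writing $1+x_nt+\cdots+(x_nt)^{s}=\dfrac{1-(x_nt)^{s+1}}{1-x_nt}$ and multiplying through by $1-x_nt$ turns this into
\[
(1-x_nt)\sum_{k\ge 0} E_k^{(s)}(x_1,\ldots,x_n)\,t^k=\bigl(1-(x_nt)^{s+1}\bigr)\sum_{k\ge 0} E_k^{(s)}(x_1,\ldots,x_{n-1})\,t^k .
\]
Comparing the coefficient of $t^k$ on the two sides yields
\[
E_k^{(s)}(x_1,\ldots,x_n)-x_nE_{k-1}^{(s)}(x_1,\ldots,x_n)=E_k^{(s)}(x_1,\ldots,x_{n-1})-x_n^{s+1}E_{k-s-1}^{(s)}(x_1,\ldots,x_{n-1}),
\]
which, after moving one term to the other side, is exactly the second asserted identity.

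The complete homogeneous case is entirely parallel, using reciprocals. In \eqref{Eq1} the factor $i=n$ is $\bigl(1-x_nt+\cdots+(-x_nt)^{s}\bigr)^{-1}=\dfrac{1+x_nt}{1-(-x_nt)^{s+1}}$, so splitting it off and clearing the denominator $1-(-x_nt)^{s+1}$ gives
\[
\bigl(1-(-x_nt)^{s+1}\bigr)\sum_{k\ge 0} H_k^{(s)}(x_1,\ldots,x_n)\,t^k=(1+x_nt)\sum_{k\ge 0} H_k^{(s)}(x_1,\ldots,x_{n-1})\,t^k .
\]
Extracting the coefficient of $t^k$, and recalling that $(-x_nt)^{s+1}=(-x_n)^{s+1}t^{s+1}$, produces
\[
H_k^{(s)}(x_1,\ldots,x_n)-(-x_n)^{s+1}H_{k-s-1}^{(s)}(x_1,\ldots,x_n)=H_k^{(s)}(x_1,\ldots,x_{n-1})+x_nH_{k-1}^{(s)}(x_1,\ldots,x_{n-1}),
\]
i.e. the first claimed identity. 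There is no real obstacle here: the whole argument is a short manipulation of generating functions, and the only points requiring a word of care are the formal-power-series justification of the two rewritings and the use of the truncation conventions for negative indices. If space permits, I would also note that iterating the elementary recurrence in $n$ (repeatedly substituting the first term on the right) and invoking these conventions collapses it back to the $(s+1)$-term relation \eqref{Eq7}, the analogous iteration of the $H^{(s)}$-recurrence being the natural companion of \eqref{Eq8}.
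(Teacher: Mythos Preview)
Your proof is correct and follows essentially the same route as the paper: split off the factor for $x_n$ in the generating functions \eqref{Eq1} and \eqref{Eq2}, rewrite that factor using the finite geometric-series identity, clear the resulting denominator, and equate coefficients of $t^k$. The only cosmetic differences are that the paper treats the $H^{(s)}$ case first and omits the details for $E^{(s)}$, whereas you do both explicitly and add a few remarks on the formal-power-series setting and the negative-index conventions.
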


\begin{proof}
	Taking into account \eqref{Eq1}, we can write
	\begin{align*}
	& \sum_{k=0}^\infty H_k^{(s)}(x_1,x_2,\ldots,x_n) t^k \\
	& \qquad = \frac{1}{1-x_nt+\cdots+(-x_nt)^s} \sum_{k=0}^\infty H_k^{(s)}(x_1,x_2,\ldots,x_{n-1}) t^k\\
	& \qquad = \frac{1+x_nt}{1-(-x_nt)^{s+1}} \sum_{k=0}^\infty H_k^{(s)}(x_1,x_2,\ldots,x_{n-1}) t^k.
	\end{align*}
	Thus we deduce that
	$$\big(1-(-x_nt)^{s+1}\big) \sum_{k=0}^\infty H_k^{(s)}(x_1,x_2,\ldots,x_n) t^k
	= (1+x_nt) \sum_{k=0}^\infty H_k^{(s)}(x_1,x_2,\ldots,x_{n-1}) t^k.$$
	Equating coefficients of $t^n$ on each side of this identity gives the first identity.
	The second identity follows in a similar way considering \eqref{Eq2}, so we omit the details.
\end{proof}

\section{Generalized symmetric functions in terms of the complete and elementary symmetric functions}
\label{S3}

It is well known that every symmetric function can be expressed as a sum of homogeneous symmetric functions. The homogeneous symmetric functions of degree $k$ in $n$ variables form a vector space, denoted $\varLambda^k_n$. There are several important bases for $\varLambda^k_n$, which are indexed by integer partitions of $k$. Proofs and details about these facts can be found in Macdonald's book \cite{Mck}. In this section, we express the generalized symmetric functions $H_k^{(s)}=H_k^{(s)}(x_{1},x_{2},\ldots,x_{n})$ and $E_k^{(s)}=E_k^{(s)}(x_{1},x_{2},\ldots,x_{n})$ in terms of the complete and elementary symmetric functions. To do this,
for each partition $\lambda$ we note
$$f_{\lambda}(x_1,x_2,\ldots,x_n)=\prod_{i=1}^{\ell(\lambda)} f_{\lambda_i}(x_1,x_2,\ldots,x_n),$$
where $f$ is any of these complete or elementary symmetric functions.

\begin{theorem}\label{T3.1}
	Let $k$ and $s$ be two positive integers. Then
		$$ H_{k}^{(s)}
			=(-1)^{k} \sum_{\substack{ \lambda \vdash k \\ l(\lambda) \leq s}} m_{\lambda}(\omega_{1,s+1},\omega_{2,s+1},\ldots, \omega_{s,s+1})h_{\lambda} $$
	and
		$$ E_{k}^{(s)}
			=(-1)^{k} \sum_{\substack{ \lambda \vdash k \\ l(\lambda) \leq s}} m_{\lambda}(\omega_{1,s+1},\omega_{2,s+1},\ldots, \omega_{s,s+1})e_{\lambda},$$
	where $\omega_{j,s+1}=e^{2j \pi i/(s+1)}$ with $j=1,2,\ldots,s$.
\end{theorem}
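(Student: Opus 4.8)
The plan is to reduce everything to the cyclotomic-type factorization already exploited in the proof of Theorem~\ref{T2.3}. Writing $\omega_{j,s+1}=e^{2j\pi i/(s+1)}$ for the nontrivial $(s+1)$-th roots of unity, one has
\[
1+x_it+\cdots+(x_it)^s=\prod_{j=1}^{s}\bigl(1-\omega_{j,s+1}x_it\bigr)
\qquad\text{and}\qquad
1-x_it+\cdots+(-x_it)^s=\prod_{j=1}^{s}\bigl(1+\omega_{j,s+1}x_it\bigr),
\]
these being just $(1-u^{s+1})/(1-u)=\prod_{j=1}^{s}(1-\omega_{j,s+1}u)$ with $u=x_it$ and $u=-x_it$ respectively. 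I would substitute these into the defining generating functions \eqref{Eq1} and \eqref{Eq2}.

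Next I would swap the two products so as to group the factors by the root-of-unity index $j$ rather than by the variable index $i$, and recognise each resulting block as a rescaled generating function for the elementary or complete symmetric functions. For the elementary case, using \eqref{e2} and the homogeneity $e_a(-\omega_{j,s+1}x_1,\ldots,-\omega_{j,s+1}x_n)=(-\omega_{j,s+1})^a e_a(x_1,\ldots,x_n)$,
\[
\sum_{k=0}^{\infty}E_k^{(s)}t^k
=\prod_{j=1}^{s}\prod_{i=1}^{n}\bigl(1-\omega_{j,s+1}x_it\bigr)
=\prod_{j=1}^{s}\Bigl(\sum_{a=0}^{\infty}(-\omega_{j,s+1})^a e_a\,t^a\Bigr),
\]
and likewise, from \eqref{e1} and the homogeneity of $h_a$,
\[
\sum_{k=0}^{\infty}H_k^{(s)}t^k
=\prod_{j=1}^{s}\prod_{i=1}^{n}\bigl(1-(-\omega_{j,s+1})x_it\bigr)^{-1}
=\prod_{j=1}^{s}\Bigl(\sum_{a=0}^{\infty}(-\omega_{j,s+1})^a h_a\,t^a\Bigr).
\]

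Then I would expand the Cauchy product of these $s$ series and extract the coefficient of $t^k$. Writing $f$ for $e$ (resp.\ $h$) and $F$ for $E$ (resp.\ $H$), this gives
\[
F_k^{(s)}=\sum_{a_1+\cdots+a_s=k}\Bigl(\prod_{j=1}^{s}(-\omega_{j,s+1})^{a_j}\Bigr)\prod_{j=1}^{s}f_{a_j}
=(-1)^k\sum_{a_1+\cdots+a_s=k}\Bigl(\prod_{j=1}^{s}\omega_{j,s+1}^{a_j}\Bigr)f_{\lambda(a)},
\]
where $\lambda(a)$ is the partition obtained by deleting the zeros from $(a_1,\ldots,a_s)$; since $f_0=1$ this is consistent with the convention $f_\lambda=\prod_i f_{\lambda_i}$. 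Finally I would collect the $s$-tuples according to the partition they determine: a tuple with $\lambda(a)=\lambda$ is a filling of the $s$ slots by the parts of $\lambda$ together with $s-l(\lambda)$ zeros, so $\lambda$ runs over partitions of $k$ with $l(\lambda)\le s$, and for each such $\lambda$ the admissible tuples are exactly the distinct rearrangements of $(\lambda_1,\ldots,\lambda_{l(\lambda)},0,\ldots,0)$ among the $s$ positions; hence $\sum_{\lambda(a)=\lambda}\prod_{j}\omega_{j,s+1}^{a_j}=m_\lambda(\omega_{1,s+1},\ldots,\omega_{s,s+1})$ by the very definition of the monomial symmetric function in the $s$ variables $\omega_{1,s+1},\ldots,\omega_{s,s+1}$. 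This produces the two stated formulas at once.

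The main point to get right is this last bookkeeping step: one must check that the passage from a sum over ordered $s$-tuples of nonnegative integers summing to $k$ to a sum over partitions reproduces precisely the monomial symmetric function in $s$ variables, with each rearrangement counted once, and that the length restriction is exactly $l(\lambda)\le s$; the rest is routine formal power series manipulation. It is also worth noting that Theorem~\ref{T3.1} is phrased without any mention of $x_1,\ldots,x_n$ or of $n$ because the derivation is valid for every $n\ge 1$ — the number of variables never interacts with the root-of-unity weights, and terms with a part exceeding $n$, which can occur only in the elementary identity, simply vanish.
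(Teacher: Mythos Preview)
Your proof is correct and follows essentially the same route as the paper: factor each $s$-term geometric sum via the nontrivial $(s{+}1)$-th roots of unity, swap the products over $i$ and $j$, recognise each $j$-block as a rescaled generating function for $h_a$ (resp.\ $e_a$), then expand the $s$-fold Cauchy product and group the resulting ordered $s$-tuples by the partition they determine to produce the $m_\lambda(\omega_{1,s+1},\ldots,\omega_{s,s+1})$ factor. The only difference is that you make the final bookkeeping step (tuples $\to$ partitions, length $\le s$, identification with the monomial symmetric function) more explicit than the paper does.
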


\begin{proof}
	\allowdisplaybreaks{
	Taking into account the generating functions \eqref{e1} and \eqref{Eq1}, we can write
	\begin{align*}
	& \sum_{k=0}^\infty H_k^{(s)} (x_1,x_2,\ldots,x_n) t^k  \\
	& \qquad = \prod_{i=1}^n \big(1-x_it+\cdots+(-x_it)^{s}\big)^{-1} \\
	& \qquad = \prod_{i=1}^n \prod_{j=1}^{s} ( 1+\omega_{j,s+1} x_i t)^{-1} = \prod_{j=1}^{s} \prod_{i=1}^n ( 1+\omega_{j,s+1} x_i t)^{-1}\\
	& \qquad = \prod_{j=1}^{s} \sum_{k=0}^\infty (-\omega_{j,s+1})^k h_k(x_1,x_2,\ldots,x_n) t^k\\
	& \qquad = \sum_{k=0}^\infty \left(\sum_{\substack{j_1+j_2+\cdots+j_{s}=k\\j_i\ge 0}}
	(-1)^{j_1+j_2+\cdots+j_{s}} \omega_{1,s+1}^{j_1}\omega_{2,s+1}^{j_2}\cdots \omega_{s,s+1}^{j_{s}} h_{j_1} h_{j_2} \ldots h_{j_{s}} \right) t^k \\
	& \qquad = \sum_{k=0}^\infty  \sum_{\substack{\lambda\vdash k\\\ell(\lambda)\leq s}} (-1)^k m_{\lambda}\left(\omega_{1,s+1},\omega_{2,s+1},\ldots,\omega_{s,s+1}\right)  h_{\lambda}(x_1,x_2,\ldots,x_n) t^k.
	\end{align*}
	The second identity follows in a similar way, considering the generating functions \eqref{e2} and \eqref{Eq2}. We have
	\begin{align*}
	& \sum_{k=0}^\infty E_k^{(s)} (x_1,x_2,\ldots,x_n) t^k  \\
	& \qquad = \prod_{i=1}^n \big(1+x_it+\cdots+(x_it)^{s}\big) \\
	& \qquad = \prod_{j=1}^{s} \prod_{i=1}^n ( 1-\omega_{j,s+1} x_i t)\\
	& \qquad = \prod_{j=1}^{s} \sum_{k=0}^\infty (-\omega_{j,s+1})^k e_k(x_1,x_2,\ldots,x_n) t^k\\
	& \qquad = \sum_{k=0}^\infty \left(\sum_{\substack{j_1+j_2+\cdots+j_{s}=k\\j_i\ge 0}}
	(-1)^{j_1+j_2+\cdots+j_{s}} \omega_{1,s+1}^{j_1}\omega_{2,s+1}^{j_2}\cdots \omega_{s,s+1}^{j_{s}} e_{j_1} e_{j_2} \ldots e_{j_{s}} \right) t^k \\
	& \qquad = \sum_{k=0}^\infty  \sum_{\substack{\lambda\vdash k\\\ell(\lambda)\leq s}} (-1)^k m_{\lambda}\left(\omega_{1,s+1},\omega_{2,s+1},\ldots,\omega_{s,s+1}\right)  e_{\lambda}(x_1,x_2,\ldots,x_n) t^k
	\end{align*}
	and the proof is finished.
}
\end{proof}

The Ferrers diagram of a partition $[\lambda_1,\lambda_2,\ldots,\lambda_k]$ is the $k$-row left-justified array of dots with $\lambda_i$ dots in the $i$-th row.
The conjugate of a partition into $s$ parts, obtained by transposing the Ferrers diagram, is a partition with largest part $s$ and vice versa. The action of conjugation establishes a $1–1$ correspondence between partitions into $s$ parts and partitions with largest part $s$. Considering \eqref{Eq3} and Theorem \ref{T3.1}, we obtain a surprising identity involving
this $1–1$ correspondence between partitions into $s$ parts and partitions with largest part $s$.

\begin{corollary}\label{C3.2}
	Let $k$, $n$ and $s$ be three positive integers and let $x_1,x_2,\ldots,x_n$ be independent
	variables. Then
	$$
	\sum_{\substack{\lambda \vdash k \\ \lambda_1\leq s}}
	m_{\lambda}(x_1,x_2,\ldots,x_n)
	= (-1)^{k} \sum_{\substack{ \lambda \vdash k \\ l(\lambda) \leq  s}} m_{\lambda}(\omega_{1,s+1},\omega_{2,s+1},\ldots,\omega_{s,s+1})
	e_{\lambda}(x_{1},x_{2},\ldots,x_{n}).
	$$
	where $\omega_{j,s+1}=e^{2j \pi i/(s+1)}$ with $j=1,2,\ldots,s$.
\end{corollary}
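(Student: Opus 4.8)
The plan is to combine two identities already established in the excerpt and simply read off the corollary. The key observation is that equation~\eqref{Eq3} gives
$$ E_k^{(s)}(x_1,x_2,\ldots,x_n)=\sum_{\substack{\lambda\vdash k\\ \lambda_1\leq s}} m_\lambda(x_1,x_2,\ldots,x_n), $$
which is exactly the left-hand side of the claimed identity, while the second formula in Theorem~\ref{T3.1} gives
$$ E_k^{(s)}=(-1)^k\sum_{\substack{\lambda\vdash k\\ l(\lambda)\leq s}} m_\lambda(\omega_{1,s+1},\omega_{2,s+1},\ldots,\omega_{s,s+1})\,e_\lambda(x_1,x_2,\ldots,x_n), $$
which is exactly the right-hand side. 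So the proof is one line: equate the two expressions for $E_k^{(s)}(x_1,x_2,\ldots,x_n)$.

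The only genuine content is to make sure the two sides are literally comparable. First I would note that \eqref{Eq3} was derived directly from the generating function \eqref{Eq2}, so the sum over $\lambda\vdash k$ with $\lambda_1\leq s$ really does equal $E_k^{(s)}$ as defined; then I would quote Theorem~\ref{T3.1} verbatim for the $E$-side expansion. Since the paper's notation $e_\lambda(x_1,\ldots,x_n)=\prod_{i=1}^{\ell(\lambda)}e_{\lambda_i}(x_1,\ldots,x_n)$ is already fixed, no reconciliation of conventions is needed. The surrounding remark about conjugation of Ferrers diagrams (partitions into $s$ parts versus partitions with largest part $s$) is purely interpretive flavor: the index set $\{\lambda\vdash k:\lambda_1\leq s\}$ on the left and $\{\lambda\vdash k: l(\lambda)\leq s\}$ on the right are conjugate to each other, which is why the identity looks surprising, but nothing about the proof depends on invoking the bijection explicitly.

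There is essentially no obstacle here; the corollary is a formal consequence. If I wanted to add a sentence of substance, I would point out \emph{why} the two seemingly different-looking objects must agree — namely that both are the coefficient of $t^k$ in $\prod_{i=1}^n\bigl(1+x_it+\cdots+(x_it)^s\bigr)$, once one uses the factorization $1+y+\cdots+y^s=\prod_{j=1}^s(1-\omega_{j,s+1}y)$ with $y=x_it$ that is already exploited in the proof of Theorem~\ref{T3.1}. Thus the whole argument is: by \eqref{Eq3} the left side is $E_k^{(s)}(x_1,\ldots,x_n)$; by Theorem~\ref{T3.1} the right side is also $E_k^{(s)}(x_1,\ldots,x_n)$; hence they are equal, completing the proof.
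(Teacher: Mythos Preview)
Your proposal is correct and matches the paper's own argument exactly: the text immediately preceding Corollary~\ref{C3.2} says ``Considering \eqref{Eq3} and Theorem~\ref{T3.1}, we obtain a surprising identity\ldots'', which is precisely the one-line derivation you describe. Your added remark about conjugation being purely interpretive flavor is also in line with the paper's presentation.
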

The following result allows us to express the generalized symmetric function $H_k^{(s)}$ and $E_k^{(s)}$ as convolutions involving the complete and elementary symmetric functions.

\begin{theorem}\label{T3.3}
	Let $k$, $n$ and $s$ be three positive integers and let $x_1,x_2,\ldots,x_n$ be independent
	variables. Then
	$$H_{k}^{(s-1)}(x_{1},x_{2},\ldots,x_{n})
	=\sum_{j=0}^{\lfloor k/s \rfloor} (-1)^{sj} h_{j}(x_{1}^{s},x_{2}^{s},\ldots,x_{n}^{s})
	e_{k-sj}(x_{1},x_{2},\ldots,x_{n})$$
	and
     $$E_{k}^{(s-1)}(x_{1},x_{2},\ldots,x_{n})
     =\sum_{j=0}^{\lfloor k/s \rfloor} (-1)^{j} e_{j}(x_{1}^{s},x_{2}^{s},\ldots,x_{n}^{s})
		h_{k-sj}(x_{1},x_{2},\ldots,x_{n}).$$
\end{theorem}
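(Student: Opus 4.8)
The plan is to reduce everything to the two classical generating functions \eqref{e1} and \eqref{e2} by means of the elementary geometric‑series identity
\[
1+z+z^2+\cdots+z^{s-1}=\frac{1-z^s}{1-z}.
\]
Applying this with $z=-x_it$ turns each factor of the product \eqref{Eq1} defining $H_k^{(s-1)}$ into a ratio of two simple factors, and applying it with $z=x_it$ does the same for the product \eqref{Eq2} defining $E_k^{(s-1)}$. Each of the resulting simple factors is, up to a sign and a rescaling of variables, a factor of a generating function of type \eqref{e1} or \eqref{e2}; a Cauchy product and extraction of the coefficient of $t^k$ then give the two identities.

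Concretely, for the first identity I would write
\[
\sum_{k=0}^\infty H_k^{(s-1)}(x_1,\ldots,x_n)\,t^k
=\prod_{i=1}^n\frac{1+x_it}{1-(-1)^s x_i^s t^s}
=\left(\prod_{i=1}^n(1+x_it)\right)\left(\prod_{i=1}^n\frac{1}{1-(-1)^s x_i^s t^s}\right).
\]
By \eqref{e2} the first product equals $\sum_{m\ge 0}e_m(x_1,\ldots,x_n)t^m$. For the second product I would apply \eqref{e1} with the variables $(-1)^sx_1^s,\ldots,(-1)^sx_n^s$ and the formal parameter $t^s$, and then use that $h_j$ is homogeneous of degree $j$ to pull out the factor $\bigl((-1)^s\bigr)^j=(-1)^{sj}$, getting $\sum_{j\ge 0}(-1)^{sj}h_j(x_1^s,\ldots,x_n^s)t^{sj}$. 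Multiplying the two series and reading off the coefficient of $t^k$ (so $m+sj=k$, hence $0\le j\le\lfloor k/s\rfloor$) yields the first stated identity.

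The second identity is completely parallel: from \eqref{Eq2} and the geometric sum with $z=x_it$,
\[
\sum_{k=0}^\infty E_k^{(s-1)}(x_1,\ldots,x_n)\,t^k
=\prod_{i=1}^n\frac{1-x_i^s t^s}{1-x_it}
=\left(\prod_{i=1}^n(1-x_i^s t^s)\right)\left(\prod_{i=1}^n\frac{1}{1-x_it}\right),
\]
where $\prod_i(1-x_i^st^s)=\sum_{j\ge 0}(-1)^j e_j(x_1^s,\ldots,x_n^s)t^{sj}$ by \eqref{e2} applied to the variables $-x_i^s$ together with homogeneity of $e_j$, and $\prod_i(1-x_it)^{-1}=\sum_{m\ge 0}h_m(x_1,\ldots,x_n)t^m$ by \eqref{e1}. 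Extracting the coefficient of $t^k$ again gives the claim.

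I do not anticipate a genuine obstacle: the argument is a short manipulation of formal power series in $t$, so there are no convergence issues. The only points that need care are the sign bookkeeping under the substitutions $x_i\mapsto(-1)^sx_i^s$ (for $H$) and $x_i\mapsto -x_i^s$ (for $E$) — which is handled cleanly by invoking homogeneity rather than by expanding $m_\lambda$'s — and the observation that the numerator $1-(-x_it)^s$ occurring for $H$ contributes $(-1)^sx_i^st^s$, so the sign exponent is $sj$ there but merely $j$ for $E$. It is also worth remarking that the degenerate case $s=1$ reduces both identities to the classical relation \eqref{E1}.
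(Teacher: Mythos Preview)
Your proposal is correct and follows essentially the same route as the paper: both proofs rewrite each factor of the defining products \eqref{Eq1} and \eqref{Eq2} via the finite geometric sum, split the resulting product into the generating functions \eqref{e1} and \eqref{e2} (with variables $x_i^s$ in one factor), and then extract the coefficient of $t^k$ from the Cauchy product. The only cosmetic difference is that the paper absorbs the sign into the parameter (writing $h_j(x_1^s,\ldots,x_n^s)(-t)^{sj}$) whereas you absorb it into the variables via homogeneity; the computations are otherwise identical.
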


\begin{proof}
\allowdisplaybreaks{
According to \eqref{Eq1}, we have
\begin{align*}
& \sum_{k=0}^\infty H_k^{(s-1)} (x_{1},x_{2},\ldots,x_{n}) t^k\\
& \qquad =\prod_{i=1}^{n} \big(1-x_{i}t+\cdots+(-x_{i}t)^{s-1}\big)^{-1} \\
& \qquad=\left(\prod_{i=1}^{n}\frac{1}{1-(-x_{i}t)^{s}}\right)\left( \prod_{i=1}^{n} (1+x_{i}t)\right)\\
& \qquad=\left(\sum_{j=0}^\infty  h_{j}(x_{1}^{s},x_{2}^{s},\ldots,x_{n}^{s})(-t)^{sj} \right)\left(\sum_{j=0}^\infty e_{j}(x_{1},x_{2},\ldots,x_{n})t^{j}\right)\\
& \qquad=\sum_{k=0}^\infty \left(\sum_{j=0}^{\lfloor k/s \rfloor}(-1)^{sj}h_{j}(x_{1}^{s},x_{2}^{s},\ldots,x_{n}^{s})
e_{k-sj}(x_{1},x_{2},\ldots,x_{n})\right)t^{k}
\end{align*}
and
\begin{align*}
& \sum_{k=0}^\infty E_{k}^{(s-1)}(x_{1},x_{2},\ldots,x_{n})t^{k}\\ &\quad=\left(\prod_{i=1}^{n}(1-x^{s}_{i}t^{s})\right)\left(\prod_{i=1}^{n}\frac{1}{1-x_{i}t}\right)\\
&\quad=\left(\sum_{j=0}^\infty (-1)^{j} e_{j}(x_{1}^{s},x_{2}^{s},\ldots,x_{n}^{s}) t^{sj} \right) \left(\sum_{j=0}^\infty h_{j}(x_{1},x_{2},\ldots,x_{n})t^{j}\right)\\
&\quad=\sum_{k=0}^\infty \left( \sum_{j=0}^{\lfloor k/s \rfloor} (-1)^{j} e_{j}(x_{1}^{s},x_{2}^{s},\ldots,x_{n}^{s})
h_{k-sj}(x_{1},x_{2},\ldots,x_{n})\right)t^{k}.
\end{align*}
As required.}
\end{proof}

\begin{corollary}\label{C3.4}
	Let $k$, $n$ and $s$ be three positive integers and let $x_1,x_2,\ldots,x_n$ be independent
	variables. Then
	\begin{align*}
	& \sum_{j=0}^{\lfloor k/s \rfloor} (-1)^{sj}h_{j}(x_{1}^{s},x_{2}^{s},\ldots,x_{n}^{s})
	e_{k-sj}(x_{1},x_{2},\ldots,x_{n})\\
	& \qquad\qquad =  (-1)^{k} \sum_{\substack{ \lambda \vdash k \\ l(\lambda) < s}} m_{\lambda}(\omega_{1,s},\omega_{2,s},\ldots, \omega_{s-1,s}) h_{\lambda}(x_{1},x_{2},\ldots,x_{n})
	\end{align*}
	and
	\begin{align*}
	& \sum_{j=0}^{\lfloor k/s \rfloor} (-1)^{j}e_{j}(x_{1}^{s},x_{2}^{s},\ldots,x_{n}^{s})
	h_{k-sj}(x_{1},x_{2},\ldots,x_{n})\\
	& \qquad\qquad =  (-1)^{k} \sum_{\substack{ \lambda \vdash k \\ l(\lambda) < s}} m_{\lambda}(\omega_{1,s},\omega_{2,s},\ldots, \omega_{s-1,s}) e_{\lambda}(x_{1},x_{2},\ldots,x_{n}).
	\end{align*}
\end{corollary}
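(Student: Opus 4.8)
The plan is to recognize both sides of each asserted equality as two already-established closed forms for one and the same object, namely the generalized symmetric function $H_k^{(s-1)}=H_k^{(s-1)}(x_1,x_2,\ldots,x_n)$ (resp.\ $E_k^{(s-1)}$), and then simply to equate them; no new computation is needed beyond a shift of the parameter.

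Concretely, I would first apply Theorem~\ref{T3.1} with $s$ replaced by $s-1$, which is legitimate whenever $s\ge 2$. Since $\omega_{j,(s-1)+1}=\omega_{j,s}$ for $j=1,2,\ldots,s-1$, and since for a partition $\lambda$ the condition $\ell(\lambda)\le s-1$ is exactly the condition $\ell(\lambda)<s$, Theorem~\ref{T3.1} gives
$$ H_{k}^{(s-1)}=(-1)^{k}\sum_{\substack{\lambda\vdash k\\ \ell(\lambda)<s}} m_{\lambda}(\omega_{1,s},\omega_{2,s},\ldots,\omega_{s-1,s})\,h_{\lambda}(x_1,x_2,\ldots,x_n), $$
together with the analogous formula having $e_{\lambda}$ in place of $h_{\lambda}$ for $E_k^{(s-1)}$. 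Next I would invoke Theorem~\ref{T3.3} directly, which expresses this very same $H_{k}^{(s-1)}$ (resp.\ $E_{k}^{(s-1)}$) as the convolution $\sum_{j=0}^{\lfloor k/s\rfloor}(-1)^{sj}h_j(x_1^s,\ldots,x_n^s)e_{k-sj}(x_1,\ldots,x_n)$ appearing on the left-hand side of the corollary (resp.\ the corresponding $e_j/h_{k-sj}$ convolution). Equating the two expressions for $H_{k}^{(s-1)}$, and likewise for $E_{k}^{(s-1)}$, yields both stated identities at once. Alternatively, one can see this directly from \eqref{Eq1}: the left-hand sides are the coefficient of $t^k$ in $\prod_{i=1}^n\bigl(1-x_it+\cdots+(-x_it)^{s-1}\bigr)^{-1}$ grouped as $\bigl(\prod_i (1-(-x_it)^s)^{-1}\bigr)\bigl(\prod_i(1+x_it)\bigr)$, while the right-hand sides are the same coefficient grouped as $\prod_{j=1}^{s-1}\prod_{i=1}^n(1+\omega_{j,s}x_it)^{-1}$, exactly as in the proofs of Theorems~\ref{T3.1} and~\ref{T3.3}.

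The only point requiring separate attention is the degenerate case $s=1$, where $s-1=0$ is not a positive integer and Theorem~\ref{T3.1} does not apply as stated. Here the right-hand side is an empty sum for every $k\ge 1$, since no partition of a positive integer has fewer than one part, while the left-hand side equals $\sum_{j=0}^{k}(-1)^{j}h_{j}(x_1,\ldots,x_n)\,e_{k-j}(x_1,\ldots,x_n)$, which vanishes for $k\ge 1$ after reindexing $j\mapsto k-j$ and applying \eqref{E1}; so the identity holds trivially in this case too. I do not expect a genuine obstacle: the entire content of the corollary is the observation that Theorems~\ref{T3.1} and~\ref{T3.3} describe the same symmetric function, and the only care required is the bookkeeping of the shift $s\mapsto s-1$ (so that $\omega_{j,s+1}$ becomes $\omega_{j,s}$ and $\ell(\lambda)\le s$ becomes $\ell(\lambda)<s$) together with the disposal of the boundary case $s=1$.
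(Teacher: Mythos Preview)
Your proposal is correct and is exactly the approach the paper intends: Corollary~\ref{C3.4} is stated without proof immediately after Theorem~\ref{T3.3}, and its content is precisely the equating of the two expressions for $H_k^{(s-1)}$ (resp.\ $E_k^{(s-1)}$) furnished by Theorems~\ref{T3.1} and~\ref{T3.3} after the shift $s\mapsto s-1$. Your added treatment of the boundary case $s=1$ is a nice touch that the paper leaves implicit.
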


We remark that the second identity of this corollary is known and can be seen in a recent paper of Merca \cite[Theorem 1.1]{Mer5}.

Now,  we are able to prove some formulas for the monomial symmetric function
$$m_{\lambda}(e^{2 \pi i/(s+1)},e^{4\pi i/(s+1)},\ldots,e^{2s \pi i/(s+1)}),$$
when $\lambda$ is a partition of $k$, $s\geq k-2$ and $\ell(\lambda)\leq s$.

\begin{corollary}
	Let $k$ be a positive integer and let $\lambda=[1^{t_1}2^{t_2}\ldots k^{t_k}]$ be a partition of $k$. Then
$$ 	m_{\lambda}(\omega_{1,k+1},\omega_{2,k+1},\ldots,\omega_{k,k+1}) = (-1)^{t_1+t_2+\cdots+t_k} \binom{t_1+t_2+\cdots+t_k}{t_1,t_2,\ldots,t_k}, $$
	where $\omega_{j,k+1}=e^{2j \pi i/(k+1)}$ with $j=1,2,\ldots,k$.
\end{corollary}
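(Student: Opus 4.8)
The plan is to specialize Theorem~\ref{T3.1} to the case $s=k$ and to match it against the classical expansion of $h_k$ in terms of products of elementary symmetric functions. When $s=k$ every partition $\lambda\vdash k$ automatically satisfies $\ell(\lambda)\le k$, and by \eqref{Eq4} we have $E_k^{(k)}=h_k$. Thus the second identity of Theorem~\ref{T3.1}, read with $s=k$ (so that $\omega_{j,s+1}=\omega_{j,k+1}$), gives
$$h_k=(-1)^k\sum_{\lambda\vdash k}m_\lambda(\omega_{1,k+1},\omega_{2,k+1},\ldots,\omega_{k,k+1})\,e_\lambda(x_1,x_2,\ldots,x_n).$$

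Next I would recall the well-known expansion of the complete homogeneous symmetric function in the $e_\lambda$; this is precisely the first corollary of Section~\ref{S2} specialized to $s=1$ (where $H_k^{(1)}=h_k$ and $E_i^{(1)}=e_i$), or equivalently a standard consequence of \eqref{E1} and \cite{Mck}, namely
$$h_k=\sum_{\lambda=[1^{t_1}2^{t_2}\cdots k^{t_k}]\,\vdash\,k}(-1)^{k+t_1+t_2+\cdots+t_k}\binom{t_1+t_2+\cdots+t_k}{t_1,t_2,\ldots,t_k}\,e_\lambda(x_1,x_2,\ldots,x_n),$$
where $t_1+t_2+\cdots+t_k=\ell(\lambda)$ and $e_\lambda=e_1^{t_1}e_2^{t_2}\cdots e_k^{t_k}$.

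Finally I would fix the number of variables $n$ to be at least $k$ (both displays above hold for every $n$), so that the family $\{e_\lambda:\lambda\vdash k\}$ is linearly independent in $\varLambda_n^k$. Comparing the coefficient of $e_\lambda$ in the two expressions for $h_k$ yields
$$(-1)^k\,m_\lambda(\omega_{1,k+1},\ldots,\omega_{k,k+1})=(-1)^{k+t_1+\cdots+t_k}\binom{t_1+\cdots+t_k}{t_1,\ldots,t_k},$$
and multiplying both sides by $(-1)^k$ (using $(-1)^{2k}=1$) gives the asserted identity. The only delicate point is the linear independence of the $e_\lambda$, which is why one passes to $n\ge k$ (or to symmetric functions in infinitely many variables); the rest is sign bookkeeping. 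The same argument could equally be run with the first identity of Theorem~\ref{T3.1}, the relation $E_i^{(k)}=h_i$ obtained from \eqref{Eq3}, and the linear independence of the $h_\lambda$.
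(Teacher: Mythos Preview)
Your argument is correct and follows the same strategy as the paper's proof: specialize Theorem~\ref{T3.1} at $s=k$, match it against a known expansion, and compare coefficients in a basis of $\varLambda_n^k$. The only difference is that the paper works on the dual side---it uses the \emph{first} identity of Theorem~\ref{T3.1} together with $H_k^{(k)}=e_k$ (which comes from Theorem~\ref{T3.3} with $s=k+1$) and the MacMahon expansion of $e_k$ in the basis $\{h_\lambda\}$, whereas you use the second identity, $E_k^{(k)}=h_k$ from \eqref{Eq4}, and the expansion of $h_k$ in the basis $\{e_\lambda\}$. Either route yields the same coefficient comparison, and your care in passing to $n\ge k$ for linear independence is exactly the point the paper leaves implicit.
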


\begin{proof}
	The case $s=k+1$ of Theorem \ref{T3.3} reads as $$H_k^{(k)}=e_k.$$
	By Theorem \ref{T3.1}, we deduce that
	$$ e_k =  \sum_{\lambda \vdash k} (-1)^k m_{\lambda} (\omega_{1,k+1},\omega_{2,k+1},\ldots,\omega_{k,k+1}) h_\lambda.
	$$
	On the other hand, the relation
	$$ e_k = \sum_{\lambda \vdash k} (-1)^{k+\ell(\lambda)} \binom{\ell(\lambda)}{t_1,t_2,\ldots,t_k} h_\lambda
	$$	
	can be found in \cite[pp. 3-4]{Mac}. It is clear that
	\begin{align*}
	& \sum_{\lambda \vdash k} m_{\lambda} (\omega_{1,k+1},\omega_{2,k+1},\ldots,\omega_{k,k+1}) h_\lambda= \sum_{\lambda \vdash k} (-1)^{\ell(\lambda)} \binom{\ell(\lambda)}{t_1,t_2,\ldots,t_k} h_\lambda.
	\end{align*}
	The assertion of the corollary now follows by comparing coefficients of
	$h_\lambda$ on both sides of this equation.
\end{proof}

\begin{corollary}
	Let $k>1$ be a positive integer and let $\lambda=[1^{t_1}2^{t_2}\ldots k^{t_k}]$ be a partition of $k$ with $\ell(\lambda)<k$. Then
	$$ 	m_{\lambda}(\omega_{1,k},\omega_{2,k},\ldots,\omega_{k-1,k})
	= (-1)^{\ell(\lambda)}\left(1-\frac{k}{\ell(\lambda)} \right)
	\binom{\ell(\lambda)}{t_1,t_2,\ldots,t_k}, $$
	where $\omega_{j,k}=e^{2j \pi i/k}$ with $j=1,2,\ldots,k-1$.
\end{corollary}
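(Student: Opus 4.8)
The plan is to produce two different expansions of the generalized symmetric function $H_k^{(k-1)}$ in terms of the products $h_\lambda$ of complete homogeneous symmetric functions and then to compare coefficients. Throughout I would work with $n\ge k$ variables, so that the set $\{h_\lambda : \lambda\vdash k\}$ is linearly independent.

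First I would apply Theorem \ref{T3.3} with $s$ replaced by $k$. Since $\lfloor k/k\rfloor=1$, only the summands $j=0$ and $j=1$ occur, contributing respectively $e_k$ and $(-1)^{k}h_1(x_1^{k},\dots,x_n^{k})=(-1)^{k}p_k$; hence
$$H_k^{(k-1)} = e_k + (-1)^{k}\,p_k .$$
Next I would apply Theorem \ref{T3.1} with $s$ replaced by $k-1$. A partition $\lambda\vdash k$ has $\ell(\lambda)=k$ only for $\lambda=[1^{k}]$, so the condition $\ell(\lambda)\le k-1$ appearing there is the same as $\ell(\lambda)<k$, and Theorem \ref{T3.1} gives
$$H_k^{(k-1)} = (-1)^{k}\sum_{\substack{\lambda\vdash k\\ \ell(\lambda)<k}} m_\lambda(\omega_{1,k},\omega_{2,k},\dots,\omega_{k-1,k})\, h_\lambda .$$
Equating the two expressions for $H_k^{(k-1)}$ and multiplying through by $(-1)^{k}$ reduces the problem to reading off the $h_\lambda$-coefficients of $(-1)^{k}e_k+p_k$.

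For that step I would use the two classical expansions in the $h$-basis,
$$e_k = \sum_{\lambda\vdash k} (-1)^{k+\ell(\lambda)}\binom{\ell(\lambda)}{t_1,t_2,\dots,t_k} h_\lambda, \qquad p_k = \sum_{\lambda\vdash k} \frac{(-1)^{1+\ell(\lambda)}\,k}{\ell(\lambda)}\binom{\ell(\lambda)}{t_1,t_2,\dots,t_k} h_\lambda,$$
the first being the identity from \cite[pp.~3--4]{Mac} already invoked in the preceding corollary, and the second the complete-homogeneous form of the Girard--Newton--Waring formula recalled just before Theorem \ref{T2.5}. Taking $(-1)^{k}$ times the first plus the second, the coefficient of $h_\lambda$ becomes
$$(-1)^{\ell(\lambda)}\binom{\ell(\lambda)}{t_1,\dots,t_k} - (-1)^{\ell(\lambda)}\frac{k}{\ell(\lambda)}\binom{\ell(\lambda)}{t_1,\dots,t_k} = (-1)^{\ell(\lambda)}\left(1-\frac{k}{\ell(\lambda)}\right)\binom{\ell(\lambda)}{t_1,\dots,t_k},$$
which vanishes precisely for $\lambda=[1^{k}]$ (where $\ell(\lambda)=k$). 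Thus $(-1)^{k}e_k+p_k$ is automatically supported on partitions with $\ell(\lambda)<k$, matching the range of the sum coming from Theorem \ref{T3.1}. Comparing coefficients of $h_\lambda$ for each $\lambda\vdash k$ with $\ell(\lambda)<k$ then yields the asserted value of $m_\lambda(\omega_{1,k},\dots,\omega_{k-1,k})$.

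The main point to be careful about is the bookkeeping of the two different index shifts — $s\mapsto k$ in Theorem \ref{T3.3} versus $s\mapsto k-1$ in Theorem \ref{T3.1} — together with checking that the exceptional partition $[1^{k}]$ is excluded on both sides of the identity simultaneously, which happens automatically because the factor $1-k/\ell(\lambda)$ vanishes there. Beyond that, the argument is a routine coefficient comparison in the basis $\{h_\lambda\}_{\lambda\vdash k}$; the cases $k=2$ (where $H_2^{(1)}=h_2=e_2+p_2$) and $k=3$ serve as quick checks of the signs and of the normalizing factor.
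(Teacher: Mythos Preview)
Your proposal is correct and follows essentially the same approach as the paper: both use the case $s=k$ of Theorem~\ref{T3.3} to obtain $H_k^{(k-1)}=e_k+(-1)^k p_k$, combine this with the expansion of $H_k^{(k-1)}$ from Theorem~\ref{T3.1}, and then substitute the known $h_\lambda$-expansions of $e_k$ and $p_k$ to compare coefficients. Your version is slightly more careful in that you explicitly note the need for $n\ge k$ to ensure linear independence of the $h_\lambda$, and you verify that the coefficient vanishes at $\lambda=[1^k]$ so the two ranges of summation match; these points are implicit in the paper's proof.
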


\begin{proof}
	The case $s=k$ of Theorem \ref{T3.3} reads as $$H_k^{(k-1)}=e_k+(-1)^{k} p_k.$$
	By Theorem \ref{T3.1}, we deduce that
	$$
	e_k = \sum_{\substack{\lambda \vdash k\\ \ell(\lambda)<k}} (-1)^k m_{\lambda} (\omega_{1,k},\omega_{2,k},\ldots,\omega_{k-1,k}) h_{\lambda} - (-1)^k p_k.
	$$
	On the other hand, we have
	$$
	e_k = \sum_{\lambda \vdash k} (-1)^{k+\ell(\lambda)} \binom{\ell(\lambda)}{t_1,t_2,\ldots,t_k} h_{\lambda}
	$$	
    and
    $$
    p_k = \sum_{\lambda \vdash k} \frac{(-1)^{1+\ell(\lambda)}\cdot k}{\ell(\lambda)} \binom{\ell(\lambda)}{t_1,t_2,\ldots,t_k} h_{\lambda}.
    $$
	We can write
	\begin{align*}
	& \sum_{\substack{\lambda \vdash k\\ \ell(\lambda)<k}} (-1)^k m_{\lambda} (\omega_{1,k},\omega_{2,k},\ldots,\omega_{k-1,k}) h_{\lambda} \\
	& \qquad = \sum_{\lambda \vdash k} (-1)^{k+\ell(\lambda)} \binom{\ell(\lambda)}{t_1,t_2,\ldots,t_k} h_{\lambda}
	-\sum_{\lambda \vdash k} \frac{(-1)^{k+\ell(\lambda)}\cdot k}{\ell(\lambda)} \binom{\ell(\lambda)}{t_1,t_2,\ldots,t_k} h_{\lambda}\\
	& \qquad = \sum_{\lambda \vdash k} (-1)^{k+\ell(\lambda)}\left(1-\frac{k}{\ell(\lambda)} \right)
	\binom{\ell(\lambda)}{t_1,t_2,\ldots,t_k} h_{\lambda}
	\end{align*}
	and the proof is finished.
\end{proof}

\begin{corollary}
	Let $k>2$ be a positive integer and let $\lambda=[1^{t_1}2^{t_2}\ldots k^{t_k}]$ be a partition of $k$ with $\ell(\lambda)\leq k-2$. Then
	$$ 	m_{\lambda}(\omega_{1,k-1},\omega_{2,k-1},\ldots,\omega_{k-2,k-1})
	= (-1)^{\ell(\lambda)}\left(1-\frac{t_1\cdot (k-1)}{\ell(\lambda)^2-\ell(\lambda)} \right)
	\binom{\ell(\lambda)}{t_1,t_2,\ldots,t_k}, $$
	where $\omega_{j,k-1}=e^{2j \pi i/(k-1)}$ with $j=1,2,\ldots,k-2$.
\end{corollary}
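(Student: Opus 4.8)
The plan is to mimic the proofs of the two preceding corollaries, now using the third root‑of‑unity instance of Theorems~\ref{T3.1} and~\ref{T3.3}. First I would set $s=k-1$ in Theorem~\ref{T3.3}; since $k>2$ forces $\lfloor k/(k-1)\rfloor=1$, the sum over $j$ reduces to the terms $j=0,1$ and gives
\[
H_k^{(k-2)} = e_k + (-1)^{k-1}\,p_{k-1}\,e_1 = e_k + (-1)^{k-1}\,p_{k-1}\,h_1 ,
\]
where I use $e_1=h_1$ together with $h_1(x_1^{k-1},\ldots,x_n^{k-1})=p_{k-1}(x_1,\ldots,x_n)$. On the other hand, Theorem~\ref{T3.1} with $s=k-2$ writes the same symmetric function as
\[
H_k^{(k-2)} = (-1)^{k}\sum_{\substack{\lambda\vdash k\\ \ell(\lambda)\le k-2}} m_{\lambda}(\omega_{1,k-1},\ldots,\omega_{k-2,k-1})\,h_{\lambda} .
\]
Since the $h_{\lambda}$ with $\lambda\vdash k$ are linearly independent, comparing the coefficient of a fixed $h_{\lambda}$ with $\ell(\lambda)\le k-2$ on the two sides yields the value of $m_{\lambda}(\omega_{1,k-1},\ldots,\omega_{k-2,k-1})$, once we know the $h$‑expansions of $e_k$ and of $p_{k-1}h_1$; the coefficients of the remaining $h_{\mu}$ (those with $\ell(\mu)>k-2$) must cancel automatically because the left‑hand sides coincide.

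For $e_k$ I would invoke the classical formula $e_k=\sum_{\lambda\vdash k}(-1)^{k+\ell(\lambda)}\binom{\ell(\lambda)}{t_1,\ldots,t_k}h_{\lambda}$ of \cite[pp.~3--4]{Mac} used in the previous corollary, so that the coefficient of $h_{\lambda}$ in $e_k$ equals $(-1)^{k+\ell(\lambda)}\binom{\ell(\lambda)}{t_1,\ldots,t_k}$. For $p_{k-1}h_1$ I would use the $h$‑expansion of $p_{k-1}$ recalled in the preceding corollary (its case $k\mapsto k-1$), in which the coefficient of $h_{\mu}$ is $\frac{(-1)^{1+\ell(\mu)}(k-1)}{\ell(\mu)}$ times the multinomial coefficient in the multiplicities of $\mu$, together with the fact that multiplying by $h_1=p_1$ simply appends a part equal to $1$ to each indexing partition. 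Hence $h_{\lambda}$ can occur in $p_{k-1}h_1$ only when $t_1\ge1$, coming from the partition $\mu$ obtained from $\lambda$ by deleting one part equal to $1$ (so $\ell(\mu)=\ell(\lambda)-1$ and $\mu$ has multiplicities $t_1-1,t_2,\ldots,t_k$); a short calculation using $\binom{\ell-1}{t_1-1,t_2,\ldots,t_k}=\frac{t_1}{\ell}\binom{\ell}{t_1,t_2,\ldots,t_k}$ then shows that the coefficient of $h_{\lambda}$ in $p_{k-1}h_1$ equals $\frac{(-1)^{\ell(\lambda)}\,t_1(k-1)}{\ell(\lambda)(\ell(\lambda)-1)}\binom{\ell(\lambda)}{t_1,\ldots,t_k}$, a value which is automatically $0$ when $t_1=0$.

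Combining the two expansions, $(-1)^k m_{\lambda}(\omega_{1,k-1},\ldots,\omega_{k-2,k-1})$ is the coefficient of $h_{\lambda}$ in $e_k+(-1)^{k-1}p_{k-1}h_1$; factoring out $(-1)^{k+\ell(\lambda)}\binom{\ell(\lambda)}{t_1,\ldots,t_k}$ and using $(-1)^{k-1+\ell(\lambda)}=-(-1)^{k+\ell(\lambda)}$ turns this into $(-1)^{k+\ell(\lambda)}\bigl(1-\tfrac{t_1(k-1)}{\ell(\lambda)^2-\ell(\lambda)}\bigr)\binom{\ell(\lambda)}{t_1,\ldots,t_k}$, and dividing by $(-1)^k$ gives the corollary. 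I expect no real difficulty beyond the bookkeeping in the $p_{k-1}h_1$ expansion and the multinomial simplification; the one point to treat carefully is the degenerate case $\ell(\lambda)=1$, which forces $\lambda=[k]$ and $t_1=0$. In that case $h_{[k]}$ does not occur in $p_{k-1}h_1$ at all, so the identity collapses to $m_{[k]}(\omega_{1,k-1},\ldots,\omega_{k-2,k-1})=-1$; this also follows directly since $m_{[k]}=p_k$ and $p_k$ evaluated at the $(k-1)$‑st roots of unity different from $1$ equals $-1$ (because $k\equiv 1\pmod{k-1}$ for $k>2$), which is consistent with reading $t_1/(\ell(\lambda)^2-\ell(\lambda))$ as $0$ when $t_1=0$.
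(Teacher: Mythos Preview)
Your proposal is correct and follows essentially the same approach as the paper: both apply Theorem~\ref{T3.3} with $s=k-1$ to obtain $H_k^{(k-2)}=e_k+(-1)^{k-1}p_{k-1}h_1$, expand $e_k$ and $p_{k-1}h_1$ in the $h_\lambda$ basis via the same classical formulas, and compare with the expression from Theorem~\ref{T3.1}. Your handling of the degenerate case $\ell(\lambda)=1$ is a nice addition that the paper does not make explicit.
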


\begin{proof}
	By Theorem \ref{T3.1}, we deduce that
	$$
	H_k^{(k-2)} = \sum_{\substack{\lambda \vdash k\\ \ell(\lambda)\leq k-2}} (-1)^k m_{\lambda} (\omega_{1,k-1},\omega_{2,k-1},\ldots,\omega_{k-2,k-1}) h_{\lambda}.
	$$	
	On the other hand, taking into account the case $s=k-1$ of Theorem \ref{T3.3}, we can write
	\begin{align*}
	H_k^{(k-2)} & =e_k+(-1)^{k-1} p_{k-1}h_1\\
	& = (-1)^k \sum_{t_1+2t_2+\cdots+kt_k=k} (-1)^{t_1+t_2+\cdots+t_k}
	\binom{t_1+t_2+\cdots+t_k}{t_1,t_2,\ldots,t_k} h_1^{t_1} h_2^{t_2} \cdots h_k^{t_k}\\
	& \qquad + (-1)^{k} \sum_{t_1+2t_2+\cdots+(k-1)t_{k-1}=k-1}
	\frac{(-1)^{t_1+t_2+\cdots+t_{k-1}}(k-1)}{t_1+t_2+\cdots +t_{k-1}} \times\\
	& \qquad\qquad\qquad \times \binom{t_1+t_2+\cdots+t_{k-1}}{t_1,t_2,\ldots,t_{k-1}} h_1^{1+t_1} h_2^{t_2} \cdots h_{k-1}^{t_{k-1}}\\
	& = (-1)^k \sum_{t_1+2t_2+\cdots+kt_k=k} (-1)^{t_1+t_2+\cdots+t_k}
	\binom{t_1+t_2+\cdots+t_k}{t_1,t_2,\ldots,t_k} h_1^{t_1} h_2^{t_2} \cdots h_k^{t_k}\\
	& \qquad - (-1)^{k} \sum_{\substack{t_1+2t_2+\cdots+kt_{k}=k\\t_1>0}}
	\frac{(-1)^{t_1+t_2+\cdots+t_{k}}(k-1)}{t_1+t_2+\cdots +t_{k}-1} \times\\
	& \qquad\qquad\qquad \times \binom{t_1+t_2+\cdots+t_{k}-1}{t_1-1,t_2,\ldots,t_{k}} h_1^{t_1} h_2^{t_2} \cdots h_{k}^{t_{k}}\\
	& = (-1)^k \sum_{t_1+2t_2+\cdots+kt_k=k} (-1)^{t_1+t_2+\cdots+t_k}
	\binom{t_1+t_2+\cdots+t_k}{t_1,t_2,\ldots,t_k} h_1^{t_1} h_2^{t_2} \cdots h_k^{t_k}\\
	& \qquad - (-1)^{k} \sum_{\substack{t_1+2t_2+\cdots+kt_{k}=k\\t_1>0}}
	\frac{(-1)^{t_1+t_2+\cdots+t_{k}}\cdot t_1\cdot (k-1)}{(t_1+t_2+\cdots +t_{k}-1)(t_1+t_2+\cdots +t_{k})} \times\\
	& \qquad\qquad\qquad \times \binom{t_1+t_2+\cdots+t_{k}}{t_1,t_2,\ldots,t_{k}} h_1^{t_1} h_2^{t_2} \cdots h_{k}^{t_{k}}\\
	& = (-1)^k \sum_{\lambda\vdash k} (-1)^{\ell(\lambda)} \left(1- \frac{t_1\cdot (k-1)}{\big(\ell(\lambda)-1\big)\ell(\lambda)} \right) \binom{\ell(\lambda)}{t_1,t_2,\ldots,t_k} h_\lambda.
	\end{align*}
	This concludes the proof.
\end{proof}

Inspired by Theorem \ref{T3.3}, we provide the following result.

\begin{theorem}\label{T3.4}
	Let $k$, $n$ and $s$ be three positive integers and let $x_1,x_2,\ldots,x_n$ be independent
	variables. Then
	$$
	h_k(x_1^s,x_2^s,\ldots,x_n^s)
	= (-1)^{k(s+1)} \sum_{j=0}^{ks} (-1)^{j} h_j(x_1,x_2,\ldots,x_n) H_{ks-j}^{(s-1)}(x_1,x_2,\ldots,x_n).
	$$
    and
    $$
	e_k(x_1^s,x_2^s,\ldots,x_n^s)
	= (-1)^k \sum_{j=0}^{ks} (-1)^{j} e_j(x_1,x_2,\ldots,x_n) E_{ks-j}^{(s-1)}(x_1,x_2,\ldots,x_n).
	$$
	If $k$ is not congruent to  $0$ modulo $s$ then
	$$
	\sum_{j=0}^k (-1)^j h_j(x_1,x_2,\ldots,x_n) H_{k-j}^{(s-1)}(x_1,x_2,\ldots,x_n) = 0
	$$
	and
	$$
	\sum_{j=0}^k (-1)^j e_j(x_1,x_2,\ldots,x_n) E_{k-j}^{(s-1)}(x_1,x_2,\ldots,x_n) = 0.
	$$	
\end{theorem}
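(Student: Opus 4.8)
The plan is to obtain all four identities simultaneously from a single generating-function inversion, building on the computation already carried out in the proof of Theorem~\ref{T3.3}. There, using the geometric-series factorizations $1-x_it+\cdots+(-x_it)^{s-1}=(1-(-x_it)^s)/(1+x_it)$ and $1+x_it+\cdots+(x_it)^{s-1}=(1-x_i^st^s)/(1-x_it)$, it is shown that
\begin{align*}
\sum_{k=0}^\infty H_k^{(s-1)}(x_1,\ldots,x_n)\,t^k&=\left(\prod_{i=1}^n\frac{1}{1-(-x_it)^s}\right)\prod_{i=1}^n(1+x_it),\\
\sum_{k=0}^\infty E_k^{(s-1)}(x_1,\ldots,x_n)\,t^k&=\left(\prod_{i=1}^n(1-x_i^st^s)\right)\prod_{i=1}^n\frac{1}{1-x_it}.
\end{align*}

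The first step is to solve these for the dilated factors $\prod_i(1-(-x_it)^s)^{-1}$ and $\prod_i(1-x_i^st^s)$. Multiplying the first identity by $\prod_i(1+x_it)^{-1}=\sum_{j\ge0}(-1)^jh_j(x_1,\ldots,x_n)t^j$ and the second by $\prod_i(1-x_it)=\sum_{j\ge0}(-1)^je_j(x_1,\ldots,x_n)t^j$, and recalling the identities $\prod_i(1-(-x_it)^s)^{-1}=\sum_{j\ge0}(-1)^{sj}h_j(x_1^s,\ldots,x_n^s)t^{sj}$ and $\prod_i(1-x_i^st^s)=\sum_{j\ge0}(-1)^je_j(x_1^s,\ldots,x_n^s)t^{sj}$, one gets
\begin{align*}
\sum_{j\ge0}(-1)^{sj}h_j(x_1^s,\ldots,x_n^s)\,t^{sj}&=\left(\sum_{k\ge0}H_k^{(s-1)}t^k\right)\left(\sum_{j\ge0}(-1)^jh_j(x_1,\ldots,x_n)t^j\right),\\
\sum_{j\ge0}(-1)^{j}e_j(x_1^s,\ldots,x_n^s)\,t^{sj}&=\left(\sum_{k\ge0}E_k^{(s-1)}t^k\right)\left(\sum_{j\ge0}(-1)^je_j(x_1,\ldots,x_n)t^j\right),
\end{align*}
where $H_k^{(s-1)}=H_k^{(s-1)}(x_1,\ldots,x_n)$ and $E_k^{(s-1)}=E_k^{(s-1)}(x_1,\ldots,x_n)$.

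I would finish by comparing coefficients of $t^m$. By the Cauchy product, the coefficient of $t^m$ on the right-hand sides is $\sum_{j=0}^m(-1)^jh_j H_{m-j}^{(s-1)}$, respectively $\sum_{j=0}^m(-1)^je_j E_{m-j}^{(s-1)}$ (all at $x_1,\ldots,x_n$); on the left-hand sides only the exponents $m=ks$ occur. Hence, when $m$ is not divisible by $s$, both sums vanish, and after renaming $m=k$ these are precisely the last two identities of the theorem; when $m=ks$, the two sums equal $(-1)^{ks}h_k(x_1^s,\ldots,x_n^s)$, respectively $(-1)^ke_k(x_1^s,\ldots,x_n^s)$, and multiplying through by that sign (which is its own inverse) gives the first two identities.

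No serious obstacle is expected; the one point demanding care is the sign accounting — the hidden factor $(-1)^s$ inside $(-x_it)^s$, the ensuing $(-1)^{sj}$ attached to $h_j(x_1^s,\ldots,x_n^s)$, and the $(-1)^j$ generated when inverting $\prod_i(1+x_it)$ — together with the observation that, on the left-hand sides, only the coefficients indexed by multiples of $s$ are nonzero. In particular I would double-check the power of $-1$ in front of $h_k(x_1^s,\ldots,x_n^s)$ in the first identity, which this argument yields as $(-1)^{ks}$.
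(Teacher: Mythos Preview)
Your approach is exactly the paper's: invert the factorizations used in the proof of Theorem~\ref{T3.3} by multiplying through by $\prod_i(1+x_it)^{-1}$ (resp.\ $\prod_i(1-x_it)$), then compare coefficients of $t^m$ according to whether $s\mid m$. Your sign bookkeeping is in fact the correct one: the geometric-series identity gives $\sum_k H_k^{(s-1)}t^k=\prod_i(1+x_it)/(1-(-x_it)^s)$, so the left side becomes $\sum_{j\ge0}(-1)^{sj}h_j(x_1^s,\ldots,x_n^s)t^{sj}$ and the prefactor is $(-1)^{ks}$; the paper's proof writes $1+(-x_it)^s$ in the denominator (contradicting its own proof of Theorem~\ref{T3.3}) and thereby picks up the spurious extra $(-1)^k$ in the stated exponent $k(s+1)$. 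A quick check at $s=2$, $k=1$, $n=1$ (where $h_1(x_1^2)=x_1^2$ and $\sum_{j=0}^2(-1)^jh_jH_{2-j}^{(1)}=x_1^2$) confirms your sign.
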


\begin{proof}
	We have
	$$\sum_{k=0}^\infty H_k^{(s-1)}(x_1,x_2,\ldots,x_n) t^k = \prod_{i=1}^n \frac{1+x_it}{1+(-x_it)^s}$$
	and
	$$\sum_{k=0}^\infty E_k^{(s-1)}(x_1,x_2,\ldots,x_n) t^k  = \prod_{i=1}^n \frac{1-(x_it)^s}{1-x_it}.$$
	These relations can be rewritten as
	$$\prod_{i=1}^n \frac{1}{1+x_it} \sum_{k=0}^\infty H_k^{(s-1)}(x_1,x_2,\ldots,x_n) t^k
	= \prod_{i=1}^n \frac{1}{1+(-x_it)^s}$$
	and
	$$\prod_{i=1}^n (1-x_it) \sum_{k=0}^\infty E_k^{(s-1)}(x_1,x_2,\ldots,x_n) t^k
	= \prod_{i=1}^n \big(1-(x_it)^s\big).$$
	Thus we deduce that
	\begin{align*}
	& \sum_{k=0}^\infty (-1)^{k(s+1)} h_k(x_1^s,x_2^s,\ldots,x_n^s) t^{ks}\\
	& \qquad = \left( \sum_{k=0}^\infty (-1)^k h_k(x_1,x_2,\ldots,x_n) t^k \right)
	\left( \sum_{k=0}^\infty H_k^{(s-1)}(x_1,x_2,\ldots,x_n) t^k \right)
	\end{align*}
	and
	\begin{align*}
	& \sum_{k=0}^\infty (-1)^{k} e_k(x_1^s,x_2^s,\ldots,x_n^s) t^{ks}\\
	& \qquad = \left( \sum_{k=0}^\infty (-1)^k e_k(x_1,x_2,\ldots,x_n) t^k \right)
	\left( \sum_{k=0}^\infty E_k^{(s-1)}(x_1,x_2,\ldots,x_n) t^k \right).
	\end{align*}
	The proof follows easily by comparing the coefficients of $t^{ks}$ on both sides of these equations.	
\end{proof}


\section{Combinatorial interpretations of the generalized symmetric functions}
\label{S4}

Bazeniar et \textit{al.} \cite{BAZ}
showed that the generalized symmetric function $E_k^{(s)}$ is interpreted as weight-generating function of the lattice paths between the points $u=(0, 0)$ and $v=(k,n-1)$ with at most $s$ vertices in the eastern direction. For example, the paths from $(0,0)$ to $(3,2)$ associated to
$$
E_{3}^{(2)}(x_1,x_2,x_3)=x_1^2x_2+x_1x_2^2+x_1^2x_3+x_1x_3^2+x_2^2x_3+x_2x_2^3+x_1x_2x_3
$$
can be seen in Figure \ref{Fig1}.

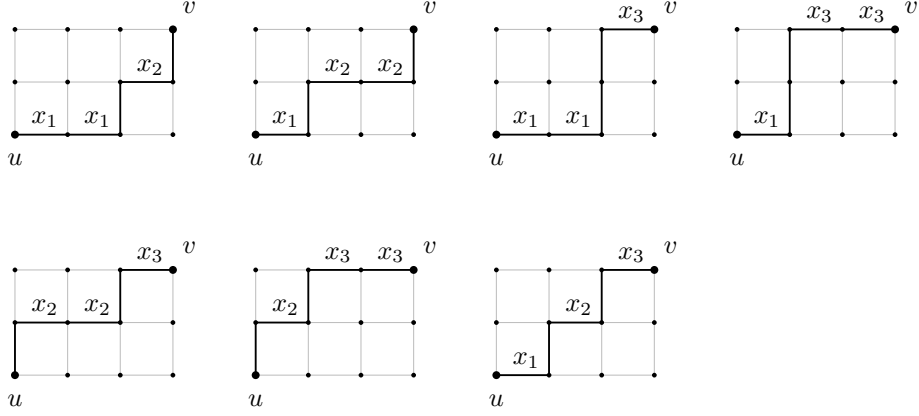
\begin{figure}[h]
\begin{center}
	\begin{tikzpicture}
	\draw[step=0.7cm,color=lightgray] (0,0) grid(2.1,1.4);
	\draw [line width=0.7pt](0,0) -- (0.7,0.0)-- (1.4,0.0) -- (1.4,0.7) -- (2.1,0.7) -- (2.1,1.4);
	
	\fill[black] (0,0) circle(1.5pt) ;
	\fill[black] (2.1,1.4) circle(1.5pt) ;
	
	\fill[] (0.7,0) circle (0.9pt);\fill[] (1.4,0) circle (0.9pt);\fill[] (2.1,0) circle (0.9pt);
	\fill[] (0,0.7) circle (0.9pt);\fill[] (0.7,0.7) circle (0.9pt);\fill[] (1.4,0.7) circle (0.9pt);
	\fill[] (2.1,0.7) circle (0.9pt);
	\fill[] (0,1.4) circle (0.9pt);\fill[] (0.7,1.4) circle (0.9pt);\fill[] (1.4,1.4) circle (0.9pt);

    \node[rectangle] at (0.4,0.2) {$x_{1}$};
    \node[rectangle] at (1.1,0.2) {$x_{1}$};
    \node[rectangle] at (1.8,0.9) {$x_{2}$};
	
	\node[right=0.1pt] at (2.1,1.7){$v$};
	\node at (0,-1/3){$u$};
	
\begin{scope}[xshift=1.0cm]
\begin{scope}[xshift=2.2cm]

\draw[step=0.7cm,color=lightgray] (0,0) grid(2.1,1.4);
\draw [line width=0.7pt](0,0) -- (0.7,0.0) -- (0.7,0.7) -- (1.4,0.7) -- (2.1,0.7) -- (2.1,1.4);

	\fill[black] (0,0) circle(1.5pt) ;
	\fill[black] (2.1,1.4) circle(1.5pt) ;

	\fill[] (0.7,0) circle (0.9pt);\fill[] (1.4,0) circle (0.9pt);\fill[] (2.1,0) circle (0.9pt);
	\fill[] (0,0.7) circle (0.9pt);\fill[] (0.7,0.7) circle (0.9pt);\fill[] (1.4,0.7) circle (0.9pt);
	\fill[] (2.1,0.7) circle (0.9pt);
	\fill[] (0,1.4) circle (0.9pt);\fill[] (0.7,1.4) circle (0.9pt);\fill[] (1.4,1.4) circle (0.9pt);

	\node[rectangle] at (0.4,0.2) {$x_{1}$};
	\node[rectangle] at (1.1,0.9) {$x_{2}$};
	\node[rectangle] at (1.8,0.9) {$x_{2}$};

	\node[right=0.1pt] at (2.1,1.7){$v$};
	\node at (0,-1/3){$u$};
	
	\begin{scope}[xshift=1.0 cm]
	\begin{scope}[xshift=2.2 cm]
	\draw[step=0.7cm,color=lightgray] (0,0) grid(2.1,1.4);
	\draw [line width=0.7pt](0,0) -- (0.7,0.0)-- (1.4,0.0) -- (1.4,0.7) -- (1.4,1.4) -- (2.1,1.4);
	
	\fill[black] (0,0) circle(1.5pt) ;
	\fill[black] (2.1,1.4) circle(1.5pt) ;
	
	\fill[] (0.7,0) circle (0.9pt);\fill[] (1.4,0) circle (0.9pt);\fill[] (2.1,0) circle (0.9pt);
	\fill[] (0,0.7) circle (0.9pt);\fill[] (0.7,0.7) circle (0.9pt);\fill[] (1.4,0.7) circle (0.9pt);
	\fill[] (2.1,0.7) circle (0.9pt);
	\fill[] (0,1.4) circle (0.9pt);\fill[] (0.7,1.4) circle (0.9pt);\fill[] (1.4,1.4) circle (0.9pt);
	
	\node[rectangle] at (0.4,0.2) {$x_{1}$};
	\node[rectangle] at (1.1,0.2) {$x_{1}$};
	\node[rectangle] at (1.8,1.6) {$x_{3}$};
	
	\node[right=0.1pt] at (2.1,1.7){$v$};
	\node at (0,-1/3){$u$};

	\begin{scope}[xshift=1.0 cm]
	\begin{scope}[xshift=2.2 cm]
\draw[step=0.7cm,color=lightgray] (0,0) grid(2.1,1.4);
\draw [line width=0.7pt](0,0) -- (0.7,0.0)-- (0.7,0.7) -- (0.7,1.4) -- (1.4,1.4) -- (2.1,1.4);

\fill[black] (0,0) circle(1.5pt) ;
\fill[black] (2.1,1.4) circle(1.5pt) ;

\fill[] (0.7,0) circle (0.9pt);\fill[] (1.4,0) circle (0.9pt);\fill[] (2.1,0) circle (0.9pt);
\fill[] (0,0.7) circle (0.9pt);\fill[] (0.7,0.7) circle (0.9pt);\fill[] (1.4,0.7) circle (0.9pt);
\fill[] (2.1,0.7) circle (0.9pt);
\fill[] (0,1.4) circle (0.9pt);\fill[] (0.7,1.4) circle (0.9pt);\fill[] (1.4,1.4) circle (0.9pt);

\node[rectangle] at (0.4,0.2) {$x_{1}$};
\node[rectangle] at (1.1,1.6) {$x_{3}$};
\node[rectangle] at (1.8,1.6) {$x_{3}$};

\node[right=0.1pt] at (2.1,1.7){$v$};
\node at (0,-1/3){$u$};

	\end{scope}
	\end{scope}
	
	\end{scope}
	\end{scope}
	
\end{scope}
\end{scope}

		\begin{scope}[yshift=-1.0 cm]
\begin{scope}[yshift=-2.2 cm]
\draw[step=0.7cm,color=lightgray] (0,0) grid(2.1,1.4);
\draw [line width=0.7pt](0,0) -- (0.0,0.7)-- (0.7,0.7) -- (1.4,0.7) -- (1.4,1.4) -- (2.1,1.4);

\fill[black] (0,0) circle(1.5pt) ;
\fill[black] (2.1,1.4) circle(1.5pt) ;

\fill[] (0.7,0) circle (0.9pt);\fill[] (1.4,0) circle (0.9pt);\fill[] (2.1,0) circle (0.9pt);
\fill[] (0,0.7) circle (0.9pt);\fill[] (0.7,0.7) circle (0.9pt);\fill[] (1.4,0.7) circle (0.9pt);
\fill[] (2.1,0.7) circle (0.9pt);
\fill[] (0,1.4) circle (0.9pt);\fill[] (0.7,1.4) circle (0.9pt);\fill[] (1.4,1.4) circle (0.9pt);

\node[rectangle] at (0.4,0.9) {$x_{2}$};
\node[rectangle] at (1.1,0.9) {$x_{2}$};
\node[rectangle] at (1.8,1.6) {$x_{3}$};

\node[right=0.1pt] at (2.1,1.7){$v$};
\node at (0,-1/3){$u$};

		\begin{scope}[xshift=1.0 cm]
\begin{scope}[xshift=2.2 cm]
\draw[step=0.7cm,color=lightgray] (0,0) grid(2.1,1.4);
\draw [line width=0.7pt](0,0) -- (0.0,0.7)-- (0.7,0.7) -- (0.7,1.4) -- (1.4,1.4) -- (2.1,1.4);

\fill[black] (0,0) circle(1.5pt) ;
\fill[black] (2.1,1.4) circle(1.5pt) ;

\fill[] (0.7,0) circle (0.9pt);\fill[] (1.4,0) circle (0.9pt);\fill[] (2.1,0) circle (0.9pt);
\fill[] (0,0.7) circle (0.9pt);\fill[] (0.7,0.7) circle (0.9pt);\fill[] (1.4,0.7) circle (0.9pt);
\fill[] (2.1,0.7) circle (0.9pt);
\fill[] (0,1.4) circle (0.9pt);\fill[] (0.7,1.4) circle (0.9pt);\fill[] (1.4,1.4) circle (0.9pt);

\node[rectangle] at (0.4,0.9) {$x_{2}$};
\node[rectangle] at (1.1,1.6) {$x_{3}$};
\node[rectangle] at (1.8,1.6) {$x_{3}$};

\node[right=0.1pt] at (2.1,1.7){$v$};
\node at (0,-1/3){$u$};

		\begin{scope}[xshift=1.0 cm]
\begin{scope}[xshift=2.2 cm]
\draw[step=0.7cm,color=lightgray] (0,0) grid(2.1,1.4);
\draw [line width=0.7pt](0,0) -- (0.7,0.0)-- (0.7,0.7) -- (1.4,0.7) -- (1.4,1.4) -- (2.1,1.4);

\fill[black] (0,0) circle(1.5pt) ;
\fill[black] (2.1,1.4) circle(1.5pt) ;

\fill[] (0.7,0) circle (0.9pt);\fill[] (1.4,0) circle (0.9pt);\fill[] (2.1,0) circle (0.9pt);
\fill[] (0,0.7) circle (0.9pt);\fill[] (0.7,0.7) circle (0.9pt);\fill[] (1.4,0.7) circle (0.9pt);
\fill[] (2.1,0.7) circle (0.9pt);
\fill[] (0,1.4) circle (0.9pt);\fill[] (0.7,1.4) circle (0.9pt);\fill[] (1.4,1.4) circle (0.9pt);

\node[rectangle] at (0.4,0.2) {$x_{1}$};
\node[rectangle] at (1.1,0.9) {$x_{2}$};
\node[rectangle] at (1.8,1.6) {$x_{3}$};

\node[right=0.1pt] at (2.1,1.7){$v$};
\node at (0,-1/3){$u$};
\end{scope}
\end{scope}
\end{scope}
\end{scope}
\end{scope}
\end{scope}

	\end{tikzpicture}
\end{center}
\caption{The seven paths from $u$ to $v$ associated to $E_{3}^{(2)}(x_1,x_2,x_3)$. }
\label{Fig1}
\end{figure}

According to \cite[Theorem 3.2]{BAZ}, the number of lattice paths
from $(0, 0)$ to $(k,n-1)$ taking at most $s$ vertices in the eastern direction is exactly the bi$^s$nomial coefficient, i.e.,
$$
\binom{n}{k}_s = E_k^{(s)}(\underbrace{1,1,\ldots,1}_n).
$$
By Theorem \ref{T3.3}, we deduce that the bi$^s$nomial coefficient can be expressed in terms of the classical binomial coefficients, i.e.,
$$
\binom{n}{k}_{s-1} = \sum_{j=0}^{\lfloor k/s \rfloor} (-1)^j \binom{n}{j} \binom{n+k-sj-1}{k-sj}.
$$
We remark that this identity is given by Theorem 2.1 in \cite{bsb}. In addition, by Theorem \ref{T3.3} we obtain the following analogs of this identity.

\begin{corollary}
	Let $k,n$ and $s$ be three positive integers. Then
$$
{n \brack k}^{(s-1)}_{q} = \sum_{j=0}^{\lfloor k/s \rfloor} (-1)^j q^{s\binom{j}{2}} {n \brack j}_{q^s} {n+k-sj-1 \brack k-sj}_q,
$$
where
$$
{n \brack k}^{(s)}_{q} = E_k^{(s)}(1,q,\ldots,q^{n-1})
$$
is the $q$-bi$^s$nomial coefficient.
\end{corollary}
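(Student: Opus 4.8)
The plan is to obtain the identity by specializing the second formula of Theorem~\ref{T3.3} at the geometric progression $x_i = q^{\,i-1}$, $i = 1,2,\dots,n$, and then rewriting each symmetric-function value as a Gaussian binomial coefficient through the classical principal specializations.

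First I would record the two standard evaluations that convert symmetric functions into $q$-binomial coefficients. Setting $x_i = q^{\,i-1}$ in the generating function \eqref{e2} and comparing with the $q$-binomial theorem $\prod_{i=0}^{n-1}(1+q^i t) = \sum_{j\ge 0} q^{\binom{j}{2}}{n \brack j}_q t^j$ gives $e_j(1,q,q^2,\dots,q^{n-1}) = q^{\binom{j}{2}}{n \brack j}_q$; likewise, setting $x_i = q^{\,i-1}$ in \eqref{e1} and comparing with $\prod_{i=0}^{n-1}(1-q^i t)^{-1} = \sum_{m\ge 0}{n+m-1 \brack m}_q t^m$ gives $h_m(1,q,q^2,\dots,q^{n-1}) = {n+m-1 \brack m}_q$. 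Both are classical and could simply be cited, but they follow immediately from the two forms of the $q$-binomial theorem.

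Next I would apply the second identity of Theorem~\ref{T3.3}, namely $E_k^{(s-1)}(x_1,\dots,x_n) = \sum_{j=0}^{\lfloor k/s\rfloor} (-1)^j e_j(x_1^s,\dots,x_n^s)\, h_{k-sj}(x_1,\dots,x_n)$, with $x_i = q^{\,i-1}$. The left-hand side is by definition ${n \brack k}^{(s-1)}_q$. On the right-hand side, $e_j(x_1^s,\dots,x_n^s)$ becomes $e_j(1,q^s,q^{2s},\dots,q^{(n-1)s})$, which by the elementary evaluation with $q$ replaced by $q^s$ equals $q^{s\binom{j}{2}}{n \brack j}_{q^s}$, while $h_{k-sj}(1,q,\dots,q^{n-1})$ equals ${n+k-sj-1 \brack k-sj}_q$. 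Substituting these three evaluations into Theorem~\ref{T3.3} yields exactly the asserted identity, with the summation range $0 \le j \le \lfloor k/s\rfloor$ carried over verbatim.

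I do not expect a genuine obstacle: the argument is bookkeeping once Theorem~\ref{T3.3} is in hand. The only points requiring a little care are the substitution $q \mapsto q^s$ inside the elementary-symmetric factor (which produces the $q^{s\binom{j}{2}}$ weight and the base $q^s$ in ${n \brack j}_{q^s}$) and the convention that ${n \brack k}^{(s)}_q = E_k^{(s)}(1,q,\dots,q^{n-1})$. As a sanity check, setting $q = 1$ collapses the identity to the bi$^s$nomial relation $\binom{n}{k}_{s-1} = \sum_{j} (-1)^j \binom{n}{j}\binom{n+k-sj-1}{k-sj}$ of \cite{bsb} recalled just above, which confirms the shape of the weights.
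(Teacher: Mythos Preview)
Your proposal is correct and follows exactly the approach the paper intends: the corollary is stated as an immediate consequence of Theorem~\ref{T3.3}, and you obtain it by specializing the second identity there at $x_i=q^{\,i-1}$ and invoking the standard principal specializations $e_j(1,q,\dots,q^{n-1})=q^{\binom{j}{2}}{n\brack j}_q$ and $h_m(1,q,\dots,q^{n-1})={n+m-1\brack m}_q$. The only extra content you supply is making these two classical evaluations explicit, which the paper leaves implicit.
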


\begin{corollary}
	Let $k,n$ and $s$ be three positive integers. Then
$$
{n \brack k}^{(s-1)}_{p,q} = \sum_{j=0}^{\lfloor k/s \rfloor} (-1)^j p^{s\binom{n-j}{2}}q^{s\binom{j}{2}} {n \brack j}_{p^s, q^s} {n+k-sj-1 \brack k-sj}_{p,q},
$$
where
$$
{n \brack k}^{(s)}_{p,q} = E_k^{(s)}(p^{n-1},p^{n-2}q,\ldots,q^{n-1})
$$
is the $p,q$-bi$^s$nomial coefficient.
\end{corollary}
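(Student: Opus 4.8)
The plan is to follow the proofs of the two preceding corollaries (the classical bi$^{s}$nomial identity and the $q$-bi$^{s}$nomial identity), each of which comes from the second identity of Theorem~\ref{T3.3} by a suitable substitution for $x_1,\dots,x_n$. Here the substitution is the $(p,q)$-geometric one, $x_i = p^{n-i}q^{i-1}$ for $i=1,\dots,n$. With this choice the left-hand side of that identity is, by definition, $E_k^{(s-1)}(p^{n-1},p^{n-2}q,\dots,q^{n-1}) = {n\brack k}^{(s-1)}_{p,q}$, so the whole task is to evaluate the factors $e_j(x_1^{s},\dots,x_n^{s})$ and $h_{k-sj}(x_1,\dots,x_n)$ that occur on the right.

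The two evaluations I would use are the $(p,q)$-analogues of $e_j(1,q,\dots,q^{n-1}) = q^{\binom j2}{n\brack j}_q$ and $h_m(1,q,\dots,q^{n-1}) = {n+m-1\brack m}_q$, namely
$$
e_j(p^{n-1},p^{n-2}q,\dots,q^{n-1}) = p^{\binom{n-j}{2}}q^{\binom{j}{2}}{n\brack j}_{p,q}
\qquad\text{and}\qquad
h_m(p^{n-1},p^{n-2}q,\dots,q^{n-1}) = {n+m-1\brack m}_{p,q}.
$$
Each of these follows from its one-variable $q$-counterpart by homogeneity: write $x_i = p^{n-1}(q/p)^{i-1}$, pull out the common power of $p$ from the homogeneous polynomial $e_j$ (resp.\ $h_m$), apply the $q$-identity in the single variable $q/p$, and convert ${n\brack j}_{q/p}$ (resp.\ ${n+m-1\brack m}_{q/p}$) back to ${n\brack j}_{p,q}$ (resp.\ ${n+m-1\brack m}_{p,q}$). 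The key structural point is that $x_i^{s} = (p^{s})^{n-i}(q^{s})^{i-1}$ is again a $(p,q)$-geometric specialization with $p,q$ replaced by $p^{s},q^{s}$, so the first evaluation gives at once $e_j(x_1^{s},\dots,x_n^{s}) = p^{s\binom{n-j}{2}}q^{s\binom{j}{2}}{n\brack j}_{p^{s},q^{s}}$.

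Finally I would substitute these two evaluations into the second identity of Theorem~\ref{T3.3}, put $m = k-sj$ (so that $n+m-1 = n+k-sj-1$, the summation range $0 \le j \le \lfloor k/s\rfloor$ being inherited unchanged), and collect the summands to obtain the stated formula. The substitution itself is routine, so I expect the real obstacle to be the bookkeeping in the middle step: establishing the two auxiliary $(p,q)$-evaluations with \emph{precisely} the $p$- and $q$-exponents required by the $p,q$-binomial normalization in force, so that each summand carries exactly the prefactor $(-1)^j p^{s\binom{n-j}{2}}q^{s\binom{j}{2}}$ in front of ${n\brack j}_{p^{s},q^{s}}{n+k-sj-1\brack k-sj}_{p,q}$. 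Once those evaluations are pinned down, expanding Theorem~\ref{T3.3} is all that remains, exactly as in the $q$-case.
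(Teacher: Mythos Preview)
Your proposal is correct and matches the paper's approach: the corollary is stated without proof as a direct consequence of the second identity of Theorem~\ref{T3.3} under the specialization $x_i = p^{n-i}q^{i-1}$, and your plan to evaluate $e_j(x_1^{s},\dots,x_n^{s})$ and $h_{k-sj}(x_1,\dots,x_n)$ via the homogeneity reduction to the one-variable $q$-case is exactly the intended (and only reasonable) route.
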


These expressions of the $q$-bi$^s$nomial (rep. $p,q$-bi$^s$nomial) coefficient in terms of  $q$-binomial  coefficients ${n \brack k}_{q}$ (resp. $p,q$-binomial coefficients ${n \brack k}_{p,q}$) seem to be new.

    And Theorem \ref{T3.4} allow us to express the binomial coefficient and its $q^s$-analogue in term of the bi$^s$niomial coefficient and its $q$-analogue, respectively.
    \begin{corollary} Let $k,n$ and $s$ be three positive integers. Then
    $$
    \binom{n}{k}=\sum_{j=0}^{ks} (-1)^{k+j} \binom{n}{j} \binom{n}{ks-j}_{s-1}
    $$
    and
    $$
    {n \brack k}_{q^s}=\sum_{j=0}^{ks} (-1)^{k+j} q^{\binom{j}{2}-s\binom{k}{2}} {n \brack j}_{q} {n \brack ks-j}^{(s-1)}_{q}.
    $$
    \end{corollary}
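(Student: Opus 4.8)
The plan is to derive both identities by specializing the elementary (second) identity of Theorem~\ref{T3.4},
$$
e_k(x_1^s,\ldots,x_n^s) = (-1)^k \sum_{j=0}^{ks} (-1)^j\, e_j(x_1,\ldots,x_n)\, E_{ks-j}^{(s-1)}(x_1,\ldots,x_n),
$$
at the two substitutions that turn elementary symmetric functions into ordinary and Gaussian binomial coefficients.

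For the first identity I would set $x_1=x_2=\cdots=x_n=1$, so that $x_i^s=1$ and the left-hand side is $e_k(1,\ldots,1)=\binom{n}{k}$; on the right $e_j(1,\ldots,1)=\binom{n}{j}$ and, by the definition of the bi$^s$nomial coefficient recalled earlier in this section, $E_{ks-j}^{(s-1)}(1,\ldots,1)=\binom{n}{ks-j}_{s-1}$. Absorbing the global sign $(-1)^k$ into the summand gives $\binom{n}{k}=\sum_{j=0}^{ks}(-1)^{k+j}\binom{n}{j}\binom{n}{ks-j}_{s-1}$.

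For the $q$-analogue I would put $x_i=q^{i-1}$ for $i=1,\ldots,n$. The $q$-binomial theorem $\prod_{i=0}^{n-1}(1+q^i t)=\sum_{m\ge 0}q^{\binom{m}{2}}{n\brack m}_q t^m$ gives $e_m(1,q,\ldots,q^{n-1})=q^{\binom{m}{2}}{n\brack m}_q$; since $x_i^s=(q^s)^{i-1}$, the left-hand side becomes $e_k\big(1,q^s,\ldots,(q^s)^{n-1}\big)=q^{s\binom{k}{2}}{n\brack k}_{q^s}$, while on the right $e_j(1,q,\ldots,q^{n-1})=q^{\binom{j}{2}}{n\brack j}_q$ and $E_{ks-j}^{(s-1)}(1,q,\ldots,q^{n-1})={n\brack ks-j}^{(s-1)}_q$ by the definition of the $q$-bi$^s$nomial coefficient. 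Dividing through by $q^{s\binom{k}{2}}$ and again absorbing $(-1)^k$ into the sum yields ${n\brack k}_{q^s}=\sum_{j=0}^{ks}(-1)^{k+j}q^{\binom{j}{2}-s\binom{k}{2}}{n\brack j}_q{n\brack ks-j}^{(s-1)}_q$.

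There is no analytic difficulty here; the only points demanding care are purely formal — correctly propagating the substitution $q\mapsto q^s$ forced by $x_i\mapsto x_i^s$ (this is exactly what produces the exponent $\binom{j}{2}-s\binom{k}{2}$) and keeping track of the sign $(-1)^k$. The first (complete-homogeneous) identity of Theorem~\ref{T3.4} would not serve here, since its specializations involve $h_k$ and the functions $H^{(s-1)}_{ks-j}$ rather than the bi$^s$nomial coefficients, so the elementary version is the one to use.
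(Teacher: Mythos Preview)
Your proposal is correct and is exactly the approach the paper intends: the corollary is stated immediately after the remark that ``Theorem~\ref{T3.4} allow[s] us to express the binomial coefficient and its $q^s$-analogue in term[s] of the bi$^s$nomial coefficient and its $q$-analogue,'' and no further proof is given. Your specializations $x_i=1$ and $x_i=q^{i-1}$ in the second identity of Theorem~\ref{T3.4}, together with the standard evaluation $e_m(1,q,\ldots,q^{n-1})=q^{\binom{m}{2}}{n\brack m}_q$, reproduce both formulas with the correct signs and $q$-powers.
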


	Inspired by this interpretation of the generalized symmetric function $E_k^{(s)}$, we provide in this section a combinatorial interpretation for the generalized symmetric function $H_{k}^{(s)}$. To do this we consider the following result which allows us to express the generalized symmetric function $H_k^{(s)}$ in terms of the monomial symmetric functions $m_\lambda$ considering all the partitions of $k$ into parts congruent to $0$ or $1$ modulo $s+1$.
	
\begin{theorem}\label{T4.1}
	Let $k,n$ and $s$ be three positive integers and let $x_{1},x_{2},\ldots
	,x_{n}$ be independent variables. Then%
	\[
	H_{k}^{(s)}\left( x_{1},x_{2},\ldots ,x_{n}\right)
	= \sum\limits_{\substack{ \lambda \vdash k \\ \lambda _{i}\equiv \{0,1\} \bmod {(s+1)} }}
	(-1)^{k+\sum\limits_{i=1}^{\ell(\lambda)} \lambda _{i} \bmod {(s+1)}}
	m_{\lambda }(x_{1},x_{2},\ldots ,x_{n}).
	\]
\end{theorem}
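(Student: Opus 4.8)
The plan is to run the same kind of generating–function computation used in the proofs of Theorems~\ref{T3.1} and~\ref{T3.3}, starting from \eqref{Eq1} and exploiting the elementary telescoping identity
$$1-x_i t+\cdots+(-x_i t)^s=\frac{1-(-x_i t)^{s+1}}{1+x_i t},$$
so that
$$\sum_{k=0}^\infty H_k^{(s)}(x_1,x_2,\ldots,x_n)t^k=\prod_{i=1}^n\big(1-x_i t+\cdots+(-x_i t)^s\big)^{-1}=\prod_{i=1}^n\frac{1+x_i t}{1-(-x_i t)^{s+1}}.$$
First I would expand the single–variable factor: using $\big(1-(-x_i t)^{s+1}\big)^{-1}=\sum_{j\ge 0}(-1)^{(s+1)j}(x_i t)^{(s+1)j}$ and multiplying by $1+x_i t$ gives $\sum_{j\ge 0}(-1)^{(s+1)j}\big((x_i t)^{(s+1)j}+(x_i t)^{(s+1)j+1}\big)$. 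Since $s\ge 1$ the exponents $(s+1)j$ and $(s+1)j+1$ never coincide, so this factor equals $\sum_{a}\varepsilon(a)(x_i t)^a$, the sum running over nonnegative integers $a$ with $a\equiv 0$ or $1\pmod{s+1}$, where $\varepsilon(a)=(-1)^{(s+1)\lfloor a/(s+1)\rfloor}=(-1)^{a-(a\bmod(s+1))}$ (one checks this uniform formula separately in the two residue classes, the extra factor $1+x_i t$ contributing only a $+1$ in the second case).

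Next I would multiply out over $i=1,\ldots,n$. A typical term is $x_1^{a_1}\cdots x_n^{a_n}t^{a_1+\cdots+a_n}$ with each $a_i\equiv 0$ or $1\pmod{s+1}$, carrying the coefficient $\prod_{i=1}^n\varepsilon(a_i)=(-1)^{\sum_i(a_i-r_i)}$ where $r_i=a_i\bmod(s+1)$; writing $k=\sum_i a_i$ this is $(-1)^{k-\sum_i r_i}=(-1)^{k+\sum_i r_i}$. Sorting the exponent tuple $(a_1,\ldots,a_n)$ into weakly decreasing order and dropping the zeros produces a partition $\lambda\vdash k$ all of whose parts are $\equiv 0$ or $1\pmod{s+1}$; conversely every such $\lambda$ with $\ell(\lambda)\le n$ arises precisely from its distinct rearrangements padded with $n-\ell(\lambda)$ zeros. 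The zero parts contribute $0$ to the residue sum, so $\sum_i r_i=\sum_{i=1}^{\ell(\lambda)}\big(\lambda_i\bmod(s+1)\big)$ depends only on $\lambda$; hence all rearrangements share the common sign $(-1)^{k+\sum_{i}(\lambda_i\bmod(s+1))}$, while the sum of the associated monomials is exactly $m_\lambda(x_1,x_2,\ldots,x_n)$ (which is $0$ when $\ell(\lambda)>n$, consistent with the statement). Equating coefficients of $t^k$ on both sides of \eqref{Eq1} then gives the claimed formula.

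The argument is essentially bookkeeping, so there is no serious obstacle; the one point demanding care is the sign computation, namely verifying that $\varepsilon(a)=(-1)^{a-(a\bmod(s+1))}$ holds uniformly for $a$ in both admissible residue classes and that $\sum_i(a_i-r_i)$ can be rewritten as $k+\sum_{i=1}^{\ell(\lambda)}(\lambda_i\bmod(s+1))$ modulo $2$. As a sanity check, when $s=1$ the condition on parts is vacuous and $\sum_i(\lambda_i\bmod 2)\equiv k\pmod 2$, forcing every sign to be $+1$, which recovers $H_k^{(1)}=h_k=\sum_{\lambda\vdash k}m_\lambda$.
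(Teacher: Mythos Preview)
Your argument is correct and is essentially the paper's own proof: the paper also rewrites each factor as $\dfrac{1+x_it}{1-(-x_it)^{s+1}}$, expands it as $\sum_{j}(-x_it)^{j(s+1)}-\sum_{j}(-x_it)^{j(s+1)+1}$, multiplies over $i$, and reads off the coefficient of $t^k$. Your write-up is in fact a bit more explicit than the paper's about the sign $\varepsilon(a)=(-1)^{a-(a\bmod(s+1))}$ and about why the monomials collect into $m_\lambda$, but the route is identical.
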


\begin{proof}
	\allowdisplaybreaks{
	According to \eqref{Eq1}, we can write
	\begin{align*}
	&\sum_{k=0}^\infty H_k^{(s)} (x_1,x_2,\ldots,x_n) t^k \\
	& \qquad = \prod_{i=1}^n \big(1-x_it+\cdots +(-x_it)^s \big)^{-1}\\
	& \qquad = \prod_{i=1}^n \frac{1+x_i t}{1-(-x_it)^{s+1}}\\
	& \qquad = \prod_{i=1}^n \big(1-(-x_it)\big) \sum_{j=0}^\infty (-x_it)^{j(s+1)}\\
	& \qquad = \prod_{i=1}^n \left(  \sum_{j=0}^\infty (-x_it)^{j(s+1)}-\sum_{j=0}^\infty (-x_it)^{j(s+1)+1}\right) \\
	& \qquad = \sum_{k=0}^\infty \left( \sum\limits_{\substack{ \lambda \vdash k \\ \lambda _{i}\equiv \{0,1\} \bmod {(s+1)} }}
	(-1)^{\sum_{i=1}^{\ell(\lambda)} \lambda _{i} \bmod {(s+1)}}
	m_{\lambda }(x_{1},x_{2},\ldots ,x_{n}) \right) (-t)^k
	\end{align*}
	and the proof follows easily.}
\end{proof}

\begin{remark}
	When $s$ is odd, we have
	$$
	H_{k}^{(s)}\left( x_{1},x_{2},\ldots ,x_{n}\right)
	=\sum_{\substack{ \lambda \vdash k \\ \lambda _{i}\equiv \{0,1\} \bmod {(s+1)} }}
	m_{\lambda }(x_{1},x_{2},\ldots,x_{n}).
	$$	
\end{remark}

The following consequence of Theorem \ref{T4.1} is an analogy of Corollary \ref{C3.2} establishing a connection between all the partitions of $k$ into parts congruent to $0$ or $1$ modulo $s+1$ and the partitions of $k$ into at most $s$ parts.

\begin{corollary}\label{C4.3}
	Let $k$, $n$ and $s$ be three positive integers and let $x_1,x_2,\ldots,x_n$ be independent
	variables. Then
	\begin{align*}
	& \sum\limits_{\substack{ \lambda \vdash k \\ \lambda _{i}\equiv \{0,1\} \bmod {(s+1)} }}
	(-1)^{\sum\limits_{i=1}^{\ell(\lambda)} \lambda _{i} \bmod {(s+1)}}
	m_{\lambda }(x_{1},x_{2},\ldots ,x_{n})\\
	&\qquad\qquad\qquad =  \sum_{\substack{ \lambda \vdash k \\ l(\lambda) \leq s}} m_{\lambda}(\omega_{1,s+1},\omega_{2,s+1},\ldots, \omega_{s,s+1}) h_{\lambda}(x_{1},x_{2},\ldots,x_{n}).
	\end{align*}
\end{corollary}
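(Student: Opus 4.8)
The plan is to read the identity off from two different expansions of the same object, the generalized symmetric function $H_k^{(s)}=H_k^{(s)}(x_1,\dots,x_n)$. Theorem \ref{T4.1} gives $H_k^{(s)}$ as a signed sum of monomial symmetric functions $m_\lambda$ taken over the partitions $\lambda\vdash k$ whose parts are all congruent to $0$ or $1$ modulo $s+1$; the left-hand side of the corollary is, up to the global sign $(-1)^k$, exactly that sum. On the other hand, Theorem \ref{T3.1} gives $H_k^{(s)}$ as $(-1)^k$ times a sum of products $h_\lambda$ with coefficients $m_\lambda(\omega_{1,s+1},\dots,\omega_{s,s+1})$, ranging over $\lambda\vdash k$ with $\ell(\lambda)\le s$; that sum is the right-hand side of the corollary.

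Concretely, first I would record the two formulas
$$ H_{k}^{(s)} = \sum_{\substack{\lambda\vdash k\\ \lambda_i\equiv\{0,1\}\bmod(s+1)}} (-1)^{k+\sum_{i=1}^{\ell(\lambda)}\lambda_i\bmod(s+1)} m_\lambda(x_1,\dots,x_n) $$
from Theorem \ref{T4.1} and
$$ H_{k}^{(s)} = (-1)^k \sum_{\substack{\lambda\vdash k\\ l(\lambda)\le s}} m_\lambda(\omega_{1,s+1},\dots,\omega_{s,s+1})\, h_\lambda(x_1,\dots,x_n) $$
from Theorem \ref{T3.1}. Multiplying each identity through by $(-1)^k$ and using $(-1)^{2k}=1$ collapses the exponent $k+\sum_i\lambda_i\bmod(s+1)$ in the first to $\sum_i\lambda_i\bmod(s+1)$ and cancels the prefactor $(-1)^k$ in the second. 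Equating the two resulting expressions for $(-1)^k H_k^{(s)}$ gives the claimed identity.

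There is no real obstacle here: the corollary is a formal consequence of Theorems \ref{T4.1} and \ref{T3.1}, and the only point demanding a moment's attention is the sign arithmetic, namely the cancellation $(-1)^k\cdot(-1)^{k+\sum_i\lambda_i\bmod(s+1)}=(-1)^{\sum_i\lambda_i\bmod(s+1)}$. It is also worth remarking that both sides live in $\varLambda^k_n$ and the identity holds for every $n$, so no issue arises from treating $x_1,\dots,x_n$ as independent variables. One could alternatively phrase the whole argument at the level of generating functions in $t$, equating the coefficient of $t^k$ in the product $\prod_{i=1}^n(1+x_it)/\bigl(1-(-x_it)^{s+1}\bigr)$ expanded two ways, but invoking the two theorems directly is cleaner.
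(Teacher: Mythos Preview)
Your proposal is correct and follows essentially the same approach as the paper: the corollary is stated there as an immediate consequence of Theorem~\ref{T4.1} (combined with Theorem~\ref{T3.1}), and your argument simply makes explicit the equating of the two expressions for $H_k^{(s)}$ after multiplying through by $(-1)^k$.
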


Let $\mathcal{P}_{n,k}^s$ be the set of the lattice paths
between the points $u=(0,0)$ and $v=(k,n-1)$ where the number of the vertices in the eastern direction is congruent to $0$ or $1$ modulo $s+1$. For $P=(p_{1},p_2,\ldots ,p_{n+k-1})\in \mathcal{P}_{n,k}^s $, we consider
$$n_{i}(P):=\text{\ the number of the eastern step modulo }(s+1) \text{\ in level }i.$$
and the $H^{(s)}$-labeling which assigns the label for each eastern step as follows
$$
L\left(p_{i}\right):=\left( \text{the number of northern }p_{j}\text{ preceding }p_{i}\right) +1.
$$
Figure \ref{Esl} shows the $H^{(s)}$-labeling.
\begin{figure}[h]
\begin{center}
\begin{tikzpicture}
\draw[step=0.7cm,color=lightgray] (0,0) grid(2.1,1.4);
\draw [line width=0.7pt](0,0) -- (0,0.7)-- (0.7,0.7) -- (1.4,0.7) -- (1.4,0.7)--(2.1,0.7)--(2.1,1.4);

\fill[black] (0,0) circle(1.5pt) ;
\fill[black] (2.1,1.4) circle(1.5pt) ;

\fill[] (0.7,0) circle (0.9pt);\fill[] (1.4,0) circle (0.9pt);\fill[] (2.1,0) circle (0.9pt);
\fill[] (0,0.7) circle (0.9pt);\fill[] (0.7,0.7) circle (0.9pt);\fill[] (1.4,0.7) circle (0.9pt);
\fill[] (2.1,0.7) circle (0.9pt);
\fill[] (0,1.4) circle (0.9pt);\fill[] (0.7,1.4) circle (0.9pt);\fill[] (1.4,1.4) circle (0.9pt);

\node[right=0.1pt] at (2.1,1.7){$v$};
\node at (0,-1/3){$u$};
\node[rectangle] at (0.2,0.3) {$p_{1}$};
\node[rectangle] at (0.4,0.9) {$p_{2}$};
\node[rectangle] at (1.1,0.9) {$p_{3}$};
\node[rectangle] at (1.75,0.9) {$p_{4}$};
\node[rectangle] at (2.4,1.1) {$p_{5}$};

\begin{scope}[xshift=2.7cm]
\draw [->] (0.2,0.4)--(2.2,0.4);
\node[rectangle] at (1.2,0.7) {$H^{(2)}-labeling$};
\begin{scope}[xshift=2.8cm]

\draw[step=0.7cm,color=lightgray] (0,0) grid(2.1,1.4);
\draw [line width=0.7pt](0,0) -- (0,0.7)-- (0.7,0.7) -- (1.4,0.7) -- (1.4,0.7)--(2.1,0.7)--(2.1,1.4);

\fill[black] (0,0) circle(1.5pt) ;
\fill[black] (2.1,1.4) circle(1.5pt) ;

\fill[] (0.7,0) circle (0.9pt);\fill[] (1.4,0) circle (0.9pt);\fill[] (2.1,0) circle (0.9pt);
\fill[] (0,0.7) circle (0.9pt);\fill[] (0.7,0.7) circle (0.9pt);\fill[] (1.4,0.7) circle (0.9pt);
\fill[] (2.1,0.7) circle (0.9pt);
\fill[] (0,1.4) circle (0.9pt);\fill[] (0.7,1.4) circle (0.9pt);\fill[] (1.4,1.4) circle (0.9pt);

\node[right=0.1pt] at (2.1,1.7){$v$};
\node at (0,-1/3){$u$};
\node[rectangle] at (0.4,0.9) {$2$};
\node[rectangle] at (1.1,0.9) {$2$};
\node[rectangle] at (1.75,0.9) {$2$};

\begin{scope}[xshift=2.5cm]
\draw [->] (0.2,0.4)--(1.2,00.4);
\node[rectangle] at (0.7,0.6) {$x^P$};

\begin{scope}[xshift=1.6cm]

\draw[step=0.7cm,color=lightgray] (0,0) grid(2.1,1.4);
\draw [line width=0.7pt](0,0) -- (0,0.7)-- (0.7,0.7) -- (1.4,0.7) -- (1.4,0.7)--(2.1,0.7)--(2.1,1.4);

\fill[black] (0,0) circle(1.5pt) ;
\fill[black] (2.1,1.4) circle(1.5pt) ;

\fill[] (0.7,0) circle (0.9pt);\fill[] (1.4,0) circle (0.9pt);\fill[] (2.1,0) circle (0.9pt);
\fill[] (0,0.7) circle (0.9pt);\fill[] (0.7,0.7) circle (0.9pt);\fill[] (1.4,0.7) circle (0.9pt);
\fill[] (2.1,0.7) circle (0.9pt);
\fill[] (0,1.4) circle (0.9pt);\fill[] (0.7,1.4) circle (0.9pt);\fill[] (1.4,1.4) circle (0.9pt);

\node[right=0.1pt] at (2.1,1.7){$v$};
\node at (0,-1/3){$u$};

\node[rectangle] at (0.4,0.9) {$x_2$};
\node[rectangle] at (1.1,0.9) {$x_2$};
\node[rectangle] at (1.75,0.9) {$x_2$};

\end{scope}
\end{scope}
\end{scope}
\end{scope}
\end{tikzpicture}
\end{center}
\caption{Illustration of $x_{2}^{3}$ by $H^{(2)}$-labeling.}
\label{Esl}
\end{figure}
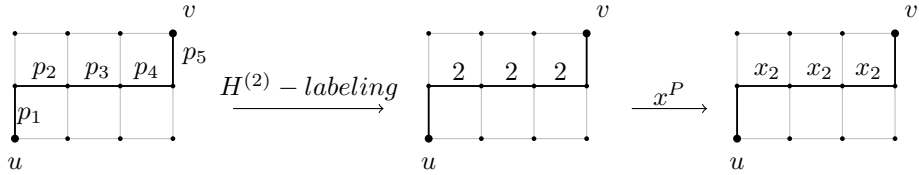

\begin{theorem} \label{T4.4}
	Let $k,n$ and $s$ be three positive integers and let $x_{1},x_{2},\ldots,x_{n}$ be independent variables. Then
\[
H_{k}^{(s)}\left( x_{1},x_2,\ldots ,x_{n}\right) =
\begin{cases}
\sum_{P\in \mathcal{P}_{n,k}^s}X^{P}, & \text{if }s\text{ odd,} \\
(-1)^{k}\sum_{P\in \mathcal{P}_{n,k}^s}(-1)^{P'} X^{P}, &
\text{otherwise}
\end{cases}
\]
with $X^{P}=\prod_i x_{L(p_{i})}$ and $P'=\sum_{i} n_{i}(P)$.
\end{theorem}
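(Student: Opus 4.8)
The plan is to prove the single identity
\[
H_{k}^{(s)}(x_{1},x_{2},\ldots,x_{n})=(-1)^{k}\sum_{P\in\mathcal{P}_{n,k}^{s}}(-1)^{P'}X^{P},
\]
valid for all positive integers $k,n,s$, and then to observe that its right-hand side collapses to $\sum_{P\in\mathcal{P}_{n,k}^{s}}X^{P}$ exactly when $s$ is odd. The key is to encode lattice paths as compositions. A path $P$ from $u=(0,0)$ to $v=(k,n-1)$ consists of $k$ eastern steps and $n-1$ northern steps, and the northern steps cut the sequence of eastern steps into $n$ consecutive (possibly empty) blocks; call the $i$-th block ``level~$i$'' -- it consists of the eastern steps taken at height $i-1$. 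If $a_{i}$ denotes the number of eastern steps in level~$i$, then $P\mapsto(a_{1},a_{2},\ldots,a_{n})$ is a bijection from all lattice paths $u\to v$ onto the set of compositions of $k$ into $n$ nonnegative parts, and, by the definition of $\mathcal{P}_{n,k}^{s}$, the path $P$ lies in $\mathcal{P}_{n,k}^{s}$ precisely when each $a_{i}$ is $\equiv 0$ or $1\pmod{s+1}$.

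Next I would read off the two weights in this encoding. An eastern step in level~$i$ is preceded by exactly $i-1$ northern steps, so the $H^{(s)}$-labeling assigns it the label $i$; consequently $X^{P}=\prod_{i}x_{L(p_{i})}=x_{1}^{a_{1}}x_{2}^{a_{2}}\cdots x_{n}^{a_{n}}$, while $n_{i}(P)=a_{i}\bmod(s+1)$ and hence $P'=\sum_{i=1}^{n}\bigl(a_{i}\bmod(s+1)\bigr)$. Now group the paths of $\mathcal{P}_{n,k}^{s}$ by the partition $\lambda$ obtained from $(a_{1},\ldots,a_{n})$ by deleting the zero entries and sorting the rest weakly decreasingly. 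Such a $\lambda$ ranges over all partitions of $k$ with $\ell(\lambda)\le n$ and parts $\equiv 0$ or $1\pmod{s+1}$; for a fixed $\lambda$ the admissible compositions are exactly the distinct rearrangements of $(\lambda_{1},\ldots,\lambda_{\ell(\lambda)},0,\ldots,0)$, so summing $x_{1}^{a_{1}}\cdots x_{n}^{a_{n}}$ over them gives $m_{\lambda}(x_{1},\ldots,x_{n})$; and $\sum_{i}(a_{i}\bmod(s+1))$ depends only on $\lambda$, equalling $\sum_{i=1}^{\ell(\lambda)}(\lambda_{i}\bmod(s+1))$ since empty levels contribute $0$. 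Putting these together,
\begin{align*}
(-1)^{k}\sum_{P\in\mathcal{P}_{n,k}^{s}}(-1)^{P'}X^{P}
&=(-1)^{k}\sum_{\substack{\lambda\vdash k\\ \lambda_{i}\equiv\{0,1\}\bmod(s+1)}}(-1)^{\sum_{i}(\lambda_{i}\bmod(s+1))}\,m_{\lambda}(x_{1},\ldots,x_{n})\\
&=H_{k}^{(s)}(x_{1},\ldots,x_{n}),
\end{align*}
the last equality being exactly Theorem~\ref{T4.1}.

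Finally I would treat the case split. When $s$ is odd, $s+1$ is even, so every part $\equiv 0\pmod{s+1}$ is even, every part $\equiv 1\pmod{s+1}$ is odd, and the empty levels contribute $0$ both to $\sum_{i}a_{i}$ and to $P'$; hence $k=\sum_{i}a_{i}$ and $P'$ have the same parity for every $P\in\mathcal{P}_{n,k}^{s}$, i.e.\ $(-1)^{k+P'}=1$, and the identity above becomes $H_{k}^{(s)}=\sum_{P\in\mathcal{P}_{n,k}^{s}}X^{P}$ (consistently, the Remark following Theorem~\ref{T4.1} shows that the corresponding signs $(-1)^{k+\sum_{i}(\lambda_{i}\bmod(s+1))}$ are all $+1$ in this case). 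For $s$ even the identity is kept as it stands, which is precisely the ``otherwise'' clause.

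The bijection between paths and compositions and the regrouping into monomial symmetric functions are routine; the one delicate point is the sign bookkeeping -- keeping $(-1)^{k}(-1)^{P'}$ aligned with the exponent $k+\sum_{i}(\lambda_{i}\bmod(s+1))$ of Theorem~\ref{T4.1}, and verifying that the parity of $s$ is exactly what decides whether that global sign is forced to $+1$. I would write the parity computation out in full and keep everything else brief.
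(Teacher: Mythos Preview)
Your proof is correct and follows the same route as the paper's: both deduce the result from Theorem~\ref{T4.1} via the standard bijection between lattice paths in $\mathcal{P}_{n,k}^{s}$ and compositions $(a_1,\ldots,a_n)$ of $k$ with each $a_i\equiv 0$ or $1\pmod{s+1}$, grouping by partition type to produce the monomial symmetric functions. In fact you supply considerably more detail than the paper's brief sketch, in particular the explicit parity argument showing that $(-1)^{k+P'}=1$ for all $P$ when $s$ is odd.
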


\begin{proof}
From Theorem \ref{T4.1}, is easy to see that the generalized symmetric function $H_{k}^{(s)}\left( x_{1},x_2,\ldots ,x_{n}\right)$ is a weight-generating function of lattice paths between two points. For each unit variable $x_{i}$ in this symmetric function we associate one unit horizontal (east) vertex, and if we suppose that each lattice path starting in $u=(0,0)$ then it ends in $v=(k,n-1)$ where the number of the vertices in the eastern direction equal to $0$ or $1$ modulo $(s+1)$.
\end{proof}

 Figure \ref{fg} shows the lattice path interpretation for
$$H_{3}^{(2)}(x_{1},x_{2},x_{3})=-x_{1}^{3}-x_{2}^{3}-x_{3}^{3}+x_1x_2x_3.$$

\begin{figure}[h]
	\begin{center}
		\begin{tikzpicture}
		\draw[step=0.7cm,color=lightgray] (0,0) grid(2.1,1.4);
		\draw [line width=0.7pt](0,0) -- (0.7,0)--(1.4,0) -- (2.1,0)--(2.1,1.4);
		
		\fill[black] (0,0) circle(1.5pt) ;
		\fill[black] (2.1,1.4) circle(1.5pt) ;
		
		\fill[] (0.7,0) circle (0.9pt);\fill[] (1.4,0) circle (0.9pt);\fill[] (2.1,0) circle (0.9pt);
		\fill[] (0,0.7) circle (0.9pt);\fill[] (0.7,0.7) circle (0.9pt);\fill[] (1.4,0.7) circle (0.9pt);
		\fill[] (2.1,0.7) circle (0.9pt);
		\fill[] (0,1.4) circle (0.9pt);\fill[] (0.7,1.4) circle (0.9pt);\fill[] (1.4,1.4) circle (0.9pt);

		\node[right=0.1pt] at (2.1,1.7){$v$};
		\node at (0,-1/3){$u$};
		\node[rectangle] at (0.4,0.3) {$x_{1}$};
		\node[rectangle] at (1.1,0.3) {$x_{1}$};
		\node[rectangle] at (1.8,0.3) {$x_{1}$};
		
		\begin{scope}[xshift=1.0cm]
		\begin{scope}[xshift=2.2cm]

		\draw[step=0.7cm,color=lightgray] (0,0) grid(2.1,1.4);
		\draw [line width=0.7pt](0,0) -- (0,0.7)-- (0.7,0.7) -- (1.4,0.7) -- (1.4,0.7)--(2.1,0.7)--(2.1,1.4);
		
		\fill[black] (0,0) circle(1.5pt) ;
		\fill[black] (2.1,1.4) circle(1.5pt) ;
		
		\fill[] (0.7,0) circle (0.9pt);\fill[] (1.4,0) circle (0.9pt);\fill[] (2.1,0) circle (0.9pt);
		\fill[] (0,0.7) circle (0.9pt);\fill[] (0.7,0.7) circle (0.9pt);\fill[] (1.4,0.7) circle (0.9pt);
		\fill[] (2.1,0.7) circle (0.9pt);
		\fill[] (0,1.4) circle (0.9pt);\fill[] (0.7,1.4) circle (0.9pt);\fill[] (1.4,1.4) circle (0.9pt);

		\node[right=0.1pt] at (2.1,1.7){$v$};
		\node at (0,-1/3){$u$};
		\node[rectangle] at (0.4,0.9) {$x_2$};
		\node[rectangle] at (1.1,0.9) {$x_2$};
		\node[rectangle] at (1.75,0.9) {$x_2$};
		
		\begin{scope}[xshift=1.0cm]
		
		\begin{scope}[xshift=2.2cm]

		\draw[step=0.7cm,color=lightgray] (0,0) grid(2.1,1.4);
		\draw [line width=0.7pt](0,0) -- (0,1.4)-- (0.7,1.4) -- (1.4,1.4) -- (2.1,1.4);
		
		\fill[black] (0,0) circle(1.5pt) ;
		\fill[black] (2.1,1.4) circle(1.5pt) ;
		
		\fill[] (0.7,0) circle (0.9pt);\fill[] (1.4,0) circle (0.9pt);\fill[] (2.1,0) circle (0.9pt);
		\fill[] (0,0.7) circle (0.9pt);\fill[] (0.7,0.7) circle (0.9pt);\fill[] (1.4,0.7) circle (0.9pt);
		\fill[] (2.1,0.7) circle (0.9pt);
		\fill[] (0,1.4) circle (0.9pt);\fill[] (0.7,1.4) circle (0.9pt);\fill[] (1.4,1.4) circle (0.9pt);

		\node[right=0.1pt] at (2.1,1.7){$v$};
		\node at (0,-1/3){$u$};
		
		\node[rectangle] at (0.4,1.6) {$x_3$};
		\node[rectangle] at (1.1,1.6) {$x_3$};
		\node[rectangle] at (1.75,1.6) {$x_3$};

		\begin{scope}[xshift=1.0cm]
		
		\begin{scope}[xshift=2.2cm]

		\draw[step=0.7cm,color=lightgray] (0,0) grid(2.1,1.4);
		\draw [line width=0.7pt](0,0) -- (0.7,0)-- (0.7,0.7) -- (1.4,0.7) -- (1.4,1.4)--(2.1,1.4);
		
		\fill[black] (0,0) circle(1.5pt) ;
		\fill[black] (2.1,1.4) circle(1.5pt) ;
		
		\fill[] (0.7,0) circle (0.9pt);\fill[] (1.4,0) circle (0.9pt);\fill[] (2.1,0) circle (0.9pt);
		\fill[] (0,0.7) circle (0.9pt);\fill[] (0.7,0.7) circle (0.9pt);\fill[] (1.4,0.7) circle (0.9pt);
		\fill[] (2.1,0.7) circle (0.9pt);
		\fill[] (0,1.4) circle (0.9pt);\fill[] (0.7,1.4) circle (0.9pt);\fill[] (1.4,1.4) circle (0.9pt);

		\node[right=0.1pt] at (2.1,1.7){$v$};
		\node at (0,-1/3){$u$};
		
		\node[rectangle] at (0.4,0.2) {$x_1$};
		\node[rectangle] at (1.1,0.9) {$x_2$};
		\node[rectangle] at (1.75,1.6) {$x_3$};
		
		\end{scope}
		\end{scope}
		\end{scope}
		\end{scope}
		\end{scope}
		\end{scope}
		\end{tikzpicture}
	\end{center}
	\caption{The four paths from $u$ to $v$ associated to $H_{3}^{(2)}(x_1,x_2,x_3)$.}
	\label{fg}
\end{figure}
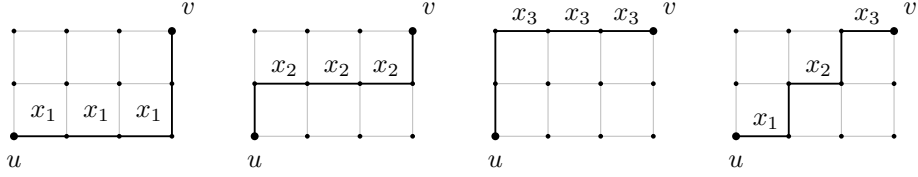

By setting $s = k$ in Theorem \ref{T4.4}, we will have the following result.

\begin{corollary}
Let $k,n$ be two positive integers and let $x_{1},x_{2},\ldots,x_{n}$ be independent variables. Then, the elementary symmetric function $e_k(x_{1},x_{2},\ldots,x_{n})$ is a weight-generating function of the paths between the points $u = (0, 0)$ and $v= (k, n-1)$ with
at most one vertex in the eastern direction.
\end{corollary}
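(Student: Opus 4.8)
\noindent
The plan is to take $s=k$ in Theorem~\ref{T4.4} and verify that the signed sum there collapses to an unsigned one equal to $e_k$.

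First I would pin down the path set $\mathcal{P}_{n,k}^{k}$. A monotone lattice path from $u=(0,0)$ to $v=(k,n-1)$ uses exactly $k$ eastern steps in all, spread over the $n$ horizontal levels; hence on each individual level the number of eastern steps is an integer in $\{0,1,\ldots,k\}$. Being congruent to $0$ or $1$ modulo $k+1$ then forces that number to be $0$ or $1$, since the only residues in $\{0,1,\ldots,k\}$ that are $\equiv 0$ or $1\pmod{k+1}$ are $0$ and $1$ themselves. Thus $\mathcal{P}_{n,k}^{k}$ is exactly the set of paths from $u$ to $v$ carrying at most one eastern step per level, i.e.\ with at most one vertex in the eastern direction.

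Next I would simplify the weight. For $P\in\mathcal{P}_{n,k}^{k}$ each $n_i(P)\in\{0,1\}$, so $P'=\sum_i n_i(P)$ equals the number of levels that carry an eastern step; as there are $k$ eastern steps and each occupied level carries exactly one, $P'=k$, whence $(-1)^{P'}=(-1)^{k}$. If $k$ is odd, Theorem~\ref{T4.4} already gives $H_{k}^{(k)}=\sum_{P\in\mathcal{P}_{n,k}^{k}}X^{P}$; if $k$ is even it gives $H_{k}^{(k)}=(-1)^{k}\sum_{P\in\mathcal{P}_{n,k}^{k}}(-1)^{P'}X^{P}=\sum_{P\in\mathcal{P}_{n,k}^{k}}X^{P}$, since $(-1)^{k}(-1)^{k}=1$. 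In either case the generating function is the plain sum $\sum_{P\in\mathcal{P}_{n,k}^{k}}X^{P}$.

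Finally I would identify this sum with $e_k$. Specializing Theorem~\ref{T4.1} to $s=k$: the only partition $\lambda\vdash k$ all of whose parts are $\equiv 0$ or $1\pmod{k+1}$ is $\lambda=[1^{k}]$ (a part $\equiv 0$ would be a positive multiple of $k+1$, hence $>k$), and for it $\sum_i\lambda_i\bmod(k+1)=k$, so its coefficient is $(-1)^{k+k}=1$, giving $H_{k}^{(k)}=m_{[1^{k}]}=e_{k}$; equivalently this is the case $s=k+1$ of Theorem~\ref{T3.3}. Combining the three steps, $e_{k}(x_1,x_2,\ldots,x_n)=\sum_{P\in\mathcal{P}_{n,k}^{k}}X^{P}$, which is the asserted interpretation — concretely, a path with eastern steps on levels $1\le i_1<i_2<\cdots<i_k\le n$ contributes $x_{i_1}x_{i_2}\cdots x_{i_k}$, reproducing $e_k$ monomial by monomial. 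The only point needing care is the even-$k$ case of the sign bookkeeping, where one must check that the global sign $(-1)^{k}$ of Theorem~\ref{T4.4} and the local signs $(-1)^{n_i(P)}$ cancel exactly; the rest is a routine specialization.
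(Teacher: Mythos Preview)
Your argument is correct and follows exactly the route indicated in the paper, namely the specialization $s=k$ of Theorem~\ref{T4.4}; the paper records this corollary without spelling out the details, and your treatment of the path set $\mathcal{P}_{n,k}^{k}$, the sign cancellation in the even case, and the identification $H_k^{(k)}=e_k$ via Theorem~\ref{T4.1} (or Theorem~\ref{T3.3}) fills in precisely what is left implicit there.
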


As we can see in \cite{BAZ}, the generalized symmetric functions $E_k^{(s)}$ can be interpreted considering the set of all tilings of an $(n+k-1)$-board using exactly $k$ red squares and $n-1$ green squares with at most $s$ red squares successively. There is an obvious bijection between this tiling interpretation and the lattice path interpretation. This tiling interpretation for the generalized symmetric functions $E_k^{(s)}$ can be adapted to the generalized symmetric functions $H_k^{(s)}$ in the following way.

Let $\mathcal{T}^{s}_{n,k}$ be the set of all tilings of an $(n+k-1)$-board using exactly $k$ red squares and $n-1$ green squares such that the number of successive red squares is congruent to $0$ or $1$ modulo $s+1$. Also let $X^{w_{T}}=x_{1}^{w_{1}}x_{2}^{w_{2}}\cdots x_{n}^{w_{n}}$ be the weight of tiling $T$. For each $T\in \mathcal{T}^{s}_{n,k}$, we calculate $w_{T}=(w_{1},w_{2}\ldots,w_{n})$ as follows:
\begin{enumerate}
  \item Assign a weight to each individual square in the tiling. A green square always receives a weight of $1$. A red square has weight $x_{m+1}$ where $m$ is equal to the number of green squares to the left of that red square in the tiling.
  \item Calculate $w_{T}=(w_{1},w_{2}\ldots,w_{n})$ by multiplying the weight $x_{m+1}$ of all the red squares.
\end{enumerate}
We also consider

\medskip

$n_{m}\left(T\right):=$the number of successive red squares  modulo $(s+1)$ after the $m$-th green square to the left of these red squares in the tiling.

For example, the weight of the tiling rrgrg is $x_{1}^{1+1}x_{2}^{1}=x_{1}^{2}x_{2}$. Figure \ref{tqq} shows this tiling and its lattice path.

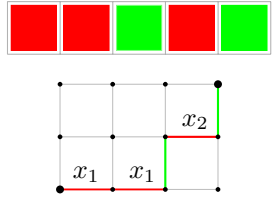
\begin{figure}[h]
	
	\begin{center}
		\begin{tikzpicture}
		
		\draw[step=0.7cm,color=black!30] (0,0) grid(3.5,0.7);
		\fill[fill=red,draw=red] (0.05,0.05) rectangle (0.65,0.65);
		\fill[fill=red,draw=red] (0.75,0.05) rectangle (1.35,0.65);
		\fill[fill=green,draw=green!50] (1.45,0.05) rectangle (2.05,0.65);
		\fill[fill=red,draw=red] (2.15,0.05) rectangle (2.75,0.65);
		\fill[fill=green,draw=green] (2.85,0.05) rectangle (3.45,0.65);
		
		\end{tikzpicture}\\
		\vspace{0.3cm}
		\begin{tikzpicture}

		\draw[step=0.7cm,color=lightgray] (0,0) grid(2.1,1.4);
		\node[rectangle] at (0.35,0.2) {$x_1$};
		\node[rectangle] at (1.1,0.2) {$x_1$};
		\node[rectangle] at (1.8,0.9) {$x_2$};
		\draw [red, line width=0.7pt](0,0) -- (1.4,0) ;
		\draw [green, line width=0.8pt] (1.4,0) -- (1.4,0.7);
		\draw [red, line width=0.8pt] (1.4,0.7)--(2.1,0.7);
		\draw [green, line width=0.8pt] (2.1,0.7)--(2.1,1.4);
		\fill[] (0.0,0) circle (1.5pt);
		\fill[] (0.7,0) circle (0.9pt);
		\fill[] (1.4,0) circle (0.9pt);
		\fill[] (2.1,0) circle (0.9pt);
		
		\fill[] (0.0,0.7) circle (0.9pt);
		\fill[] (0.7,0.7) circle (0.9pt);
		\fill[] (1.4,0.7) circle (0.9pt);
		\fill[] (2.1,0.7) circle (0.9pt);
		
		\fill[] (0.0,1.4) circle (0.9pt);
		\fill[] (0.7,1.4) circle (0.9pt);
		\fill[] (1.4,1.4) circle (0.9pt);
		\fill[] (2.1,1.4) circle (1.5pt);
		\end{tikzpicture}
		
	\end{center}
	\caption{A tiling of the weight $x_{1}^{2}x_{2}$ and its associated lattice path.}\label{tqq}
\end{figure}

\begin{theorem}\label{T4.5}
Let $k,n$ and $s$ be three positive integers and let $x_{1},x_{2},\ldots,x_{n}$ be independent variables. Then $H_{k}^{(s)}\left( x_{1},x_2,\ldots ,x_{n}\right)$ is created by summing the weights of all tilings of $\mathcal{T}^{s}_{n,k}$. That is,

\[
H_{k}^{(s)}\left( x_{1},x_2,\ldots ,x_{n}\right) =
\begin{cases}
\sum_{T\in \mathcal{T}^{s}_{n,k}} X^{w_{T}}, & \text{if }s\text{ odd,} \\
(-1)^{k}\sum_{T\in \mathcal{T}^{s}_{n,k}}(-1)^{G} X^{w_{T}}, &
\text{otherwise}
\end{cases}
\]
with $G=\sum_{T\in \mathcal{T}^{s}_{n,k}} n_{m}(T)$.
\end{theorem}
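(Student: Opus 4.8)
The plan is to deduce Theorem~\ref{T4.5} from Theorem~\ref{T4.4} by means of the standard weight- and statistic-preserving bijection between lattice paths and tilings. Record a lattice path from $u=(0,0)$ to $v=(k,n-1)$ as the word in the two letters $E$ and $N$ (for its eastern and northern unit steps) obtained by listing the steps in order; such a word has exactly $k$ letters $E$ and $n-1$ letters $N$. Replacing each $E$ by a red square and each $N$ by a green square turns this word into a tiling of an $(n+k-1)$-board with $k$ red and $n-1$ green squares, and the resulting assignment $\Phi$ is a bijection (it is manifestly invertible, and both sides are enumerated by $\binom{n+k-1}{k}$).

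First I would check that $\Phi$ carries $\mathcal{P}^{s}_{n,k}$ onto $\mathcal{T}^{s}_{n,k}$. The height of the path changes only at a northern step, so the eastern steps taken while the path sits at level $i$ form a single (possibly empty) block of consecutive $E$'s, and $\Phi$ sends this block to the maximal block of red squares having exactly $i$ green squares to their left. Consequently the multiset of lengths of the maximal runs of eastern steps of $P$ coincides with the multiset of lengths of the maximal runs of red squares of $\Phi(P)$, so the defining condition ``every such length is $\equiv 0$ or $1\pmod{s+1}$'' is satisfied by $P$ if and only if it is satisfied by $\Phi(P)$.

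Next I would match the weights and the signs. An eastern step at level $i$ is preceded by exactly $i$ northern steps, so its $H^{(s)}$-label is $i+1$; its image under $\Phi$ is a red square with exactly $i$ green squares to its left, hence of weight $x_{i+1}$. Taking the product over all eastern steps yields $X^{P}=X^{w_{\Phi(P)}}$. By the same correspondence, for each $i$ the quantity $n_i(P)$ (the number of eastern steps at level $i$, reduced modulo $s+1$) equals $n_i(\Phi(P))$ (the number of red squares in the corresponding block, reduced modulo $s+1$), and summing over $i$ gives $P'=G$. Plugging these two identities into the formula of Theorem~\ref{T4.4} and reindexing its sum by $T=\Phi(P)$ produces exactly the stated expression for $H^{(s)}_k$, including the verbatim odd/even case split.

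The main thing that needs care is the bookkeeping at the two ends of the board --- the eastern steps occurring before the first northern step (equivalently the leading red squares, with no green square to their left) and those occurring after the last northern step --- together with a consistent choice of the index ranges in the definitions of $n_i(P)$ and $n_m(T)$, so that $P'=G$ holds with no off-by-one error; once the conventions are aligned this is routine. As an alternative to invoking Theorem~\ref{T4.4}, one can argue directly from Theorem~\ref{T4.1}: a monomial $x_1^{a_1}\cdots x_n^{a_n}$ appearing in some $m_\lambda$ there, with $a_1+\cdots+a_n=k$ and every $a_i\equiv 0$ or $1\pmod{s+1}$, corresponds to the unique tiling $r^{a_1}g\,r^{a_2}g\cdots g\,r^{a_n}$ in $\mathcal{T}^{s}_{n,k}$, whose weight is $x_1^{a_1}\cdots x_n^{a_n}$ and whose statistic $G$ equals $\sum_i\bigl(a_i\bmod(s+1)\bigr)$, so that the sign $(-1)^{k+\sum_i(a_i\bmod(s+1))}$ furnished by Theorem~\ref{T4.1} is precisely $(-1)^k(-1)^G$.
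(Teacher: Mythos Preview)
Your proposal is correct and follows essentially the same route as the paper: both deduce Theorem~\ref{T4.5} from Theorem~\ref{T4.4} via the obvious bijection that sends eastern steps to red squares and northern steps to green squares, noting that this bijection preserves the weight and the sign statistic. Your write-up is considerably more careful than the paper's two-sentence sketch --- you explicitly verify that run-lengths, labels, and the statistics $P'$ and $G$ are carried to one another, and you flag the endpoint bookkeeping --- and your alternative derivation directly from Theorem~\ref{T4.1} is a nice bonus, but the underlying idea is the same.
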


\begin{proof}
Since the bijection between lattice paths and tiling is weight-preserving. Then, from Theorem \ref{T4.4} it is suffice to associate a lattice path to each $(n+k-1)$-tiling using $k$ red squares and $n-1$ green squares with the number of successive red squares congruent to $0$ or $1$ modulo $(s+1)$. This lattice path starts from in $u=(0,0)$ and ends in $v=(k,n-1)$ where the number of the vertices in the eastern direction is congruent to $0$ or $1$ modulo $(s+1)$ whose each green tile represents a move one unit up and each red square represents a move one unit right.
\end{proof}

 Figure \ref{Fig5} shows the tiling interpretation for
$$H_{3}^{(2)}(x_1,x_2,x_3)=-x_{1}^{3}-x_{2}^{3}-x_{3}^{3}+x_1x_2x_3.$$
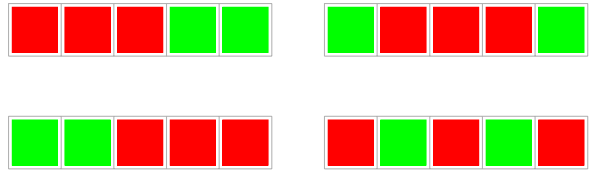
\begin{figure}[h]
	
	\begin{center}
		\begin{tikzpicture}
		
		\draw[step=0.7cm,color=black!30] (0,0) grid(3.5,0.7);
		\fill[fill=red,draw=red] (0.05,0.05) rectangle (0.65,0.65);
		\fill[fill=red,draw=red] (0.75,0.05) rectangle (1.35,0.65);
		\fill[fill=red,draw=red] (1.45,0.05) rectangle (2.05,0.65);
		\fill[fill=green,draw=green] (2.15,0.05) rectangle (2.75,0.65);
		\fill[fill=green,draw=green] (2.85,0.05) rectangle (3.45,0.65);
		
		\begin{scope}[xshift=1.5cm]
		\begin{scope}[xshift=2.7cm]
		
		\draw[step=0.7cm,color=black!30] (0,0) grid(3.5,0.7);
		\fill[fill=green,draw=green] (0.05,0.05) rectangle (0.65,0.65);
		\fill[fill=red,draw=red] (0.75,0.05) rectangle (1.35,0.65);
		\fill[fill=red,draw=red] (1.45,0.05) rectangle (2.05,0.65);
		\fill[fill=red,draw=red] (2.15,0.05) rectangle (2.75,0.65);
		\fill[fill=green,draw=green] (2.85,0.05) rectangle (3.45,0.65);
		
        \end{scope}
		\end{scope}

		\begin{scope}[yshift=-0.5cm]
		\begin{scope}[yshift=-1.0cm]
		
		\draw[step=0.7cm,color=black!30] (0,0) grid(3.5,0.7);
		\fill[fill=green,draw=green] (0.05,0.05) rectangle (0.65,0.65);
		\fill[fill=green,draw=green] (0.75,0.05) rectangle (1.35,0.65);
		\fill[fill=red,draw=red] (1.45,0.05) rectangle (2.05,0.65);
		\fill[fill=red,draw=red] (2.15,0.05) rectangle (2.75,0.65);
		\fill[fill=red,draw=red] (2.85,0.05) rectangle (3.45,0.65);
		
		   		\begin{scope}[xshift=1.5cm]
		   \begin{scope}[xshift=2.7cm]
		
		   \draw[step=0.7cm,color=black!30] (0,0) grid(3.5,0.7);
		   \fill[fill=red,draw=red] (0.05,0.05) rectangle (0.65,0.65);
		   \fill[fill=green,draw=green] (0.75,0.05) rectangle (1.35,0.65);
		   \fill[fill=red,draw=red] (1.45,0.05) rectangle (2.05,0.65);
		   \fill[fill=green,draw=green] (2.15,0.05) rectangle (2.75,0.65);
		   \fill[fill=red,draw=red] (2.85,0.05) rectangle (3.45,0.65);
		
		   \end{scope}
		   \end{scope}
		
        \end{scope}
        \end{scope}
  		\end{tikzpicture}
	\end{center}
\caption{The four tilings associated to $H_3^{(2)}(x_1,x_2,x_3)$.}\label{Fig5}
\end{figure}

By Theorem \ref{T4.5}, we can also interpret the elementary symmetric function as follows.

\begin{corollary}
Let $k,n$ be two positive integers and let $x_{1},x_{2},\ldots,x_{n}$ be independent variables. Then, the elementary symmetric function $e_k(x_{1},x_{2},\ldots,x_{n})$ is a weight-generating function of all tilings of an $(n+k-1)$-board using exactly $k$ red squares and $n-1$ green squares with at most one red square successively.
\end{corollary}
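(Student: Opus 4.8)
The plan is to specialize Theorem~\ref{T4.5} to $s=k$ and simply read off what the hypotheses become. First I would record the identity $H_{k}^{(k)}(x_{1},\ldots,x_{n})=e_{k}(x_{1},\ldots,x_{n})$, which is the case $s=k+1$ of Theorem~\ref{T3.3}: for $s=k+1$ one has $\lfloor k/s\rfloor=0$, so only the term $j=0$ of the sum survives, leaving $h_{0}\,e_{k}=e_{k}$. Hence it suffices to describe the right-hand side of Theorem~\ref{T4.5} when $s=k$, and to observe that the weighting $X^{w_{T}}$ there is the one introduced just before that theorem.

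Next I would identify the index set. A tiling in $\mathcal{T}^{s}_{n,k}$ uses only $k$ red squares, so every maximal block of consecutive red squares has length between $1$ and $k$; when $s=k$, a length $\ell$ with $1\le\ell\le k$ satisfies $\ell\equiv 0$ or $1\pmod{k+1}$ exactly when $\ell=1$. Therefore $\mathcal{T}^{k}_{n,k}$ is precisely the set of tilings of an $(n+k-1)$-board with $k$ red and $n-1$ green squares in which no two red squares are adjacent, that is, with at most one red square successively. Then I would dispose of the sign. If $k$ is odd, then $s=k$ is odd and Theorem~\ref{T4.5} gives directly $H_{k}^{(k)}=\sum_{T\in\mathcal{T}^{k}_{n,k}}X^{w_{T}}$. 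If $k$ is even, Theorem~\ref{T4.5} gives $H_{k}^{(k)}=(-1)^{k}\sum_{T\in\mathcal{T}^{k}_{n,k}}(-1)^{G}X^{w_{T}}$; but for every $T\in\mathcal{T}^{k}_{n,k}$ each gap between consecutive green squares (and at the two ends) contains $0$ or $1$ red square, so $n_{m}(T)$ equals that number and $G=\sum_{m}n_{m}(T)$ equals the total number of red squares, namely $k$. Thus $(-1)^{G}=(-1)^{k}$, the two sign factors cancel, and again $H_{k}^{(k)}=\sum_{T\in\mathcal{T}^{k}_{n,k}}X^{w_{T}}$. Combining with $H_{k}^{(k)}=e_{k}$ gives $e_{k}=\sum_{T\in\mathcal{T}^{k}_{n,k}}X^{w_{T}}$, which is exactly the asserted interpretation.

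The whole argument is essentially immediate once Theorem~\ref{T4.5} is in hand; the only place requiring a little care is this last sign computation in the even case, where one must notice that $G$ collapses to the constant $k$ on all admissible tilings so that the factor $(-1)^{k}$ is cancelled. As an equivalent alternative one could instead specialize the lattice-path statement Theorem~\ref{T4.4} at $s=k$ and transport the conclusion across the weight-preserving bijection between lattice paths and tilings described before Theorem~\ref{T4.5}; this changes nothing essential.
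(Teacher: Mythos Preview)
Your proposal is correct and follows exactly the approach the paper intends: the corollary is obtained by specializing Theorem~\ref{T4.5} at $s=k$ together with the identity $H_k^{(k)}=e_k$ (which the paper records, as you do, as the case $s=k+1$ of Theorem~\ref{T3.3}). The paper leaves the details implicit, so your careful identification of $\mathcal{T}^{k}_{n,k}$ with the ``at most one red square successively'' tilings and your verification that $G=k$ forces the signs to cancel in the even case are welcome elaborations rather than a different route.
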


\section{Concluding remarks}

In this paper, we investigate a pair of two symmetric functions which generalize the complete and elementary symmetric functions.
We show that these generalized symmetric functions satisfy many of the classical relations between complete and elementary symmetric functions.
Most of these relationships have the same shape.

The Schur symmetric functions $s_\lambda(x_1,x_2,\ldots,x_n)$ for a partition $\lambda$ can be extended in the same way. For example, we can define the generalized Schur symmetric function $s_{\lambda}^{(s)}=s_{\lambda}^{(s)}(x_1,x_2,\ldots,x_n)$ in terms of the generalized symmetric functions $H_k^{(s)}=H_k^{(x)}(x_1,x_2,\ldots,x_n)$ or $E_k^{(s)}=E_k^{(s)}(x_1,x_2,\ldots,x_n)$ as follows:
$$ s_{\lambda}^{(s)} := \det(H_{\lambda_i-i+j}^{(s)})_{1\le i,j \le n} $$
or
$$ s_{\lambda}^{(s)} := \det(E_{\lambda'_i-i+j}^{(s)})_{1\le i,j \le n},$$
where $\lambda'$ is the conjugate of $\lambda$.

It would be very appealing to
investigate the properties of the generalized Schur symmetric functions $s_{\lambda}^{(s)}$.

\bigskip


\bibliographystyle{amsplain}

\end{document}